\documentclass[11pt,amsfonts]{amsart}


\usepackage{amsmath,amssymb,amsthm,enumerate}
\usepackage[all]{xy}
\usepackage{graphicx}
\usepackage{xcolor}
\usepackage{comment}
\usepackage[T1]{fontenc}
\usepackage{graphicx}
\usepackage{calc}
\usepackage{tikz}


\textwidth 6in
\oddsidemargin .25in
\evensidemargin .25in
\parskip .02in


\SelectTips{eu}{12}


\newtheorem{thm}{Theorem}[section]
\newtheorem{prop}[thm]{Proposition}
\newtheorem{lem}[thm]{Lemma}
\newtheorem{cor}[thm]{Corollary}

\newtheorem{problem}[thm]{Problem}
\newtheorem{openproblem}[thm]{Open Problem}

\theoremstyle{definition}
\newtheorem{definition}[thm]{Definition}
\newtheorem{example}[thm]{Example}
\newtheorem{conjecture}[thm]{Conjecture}

\theoremstyle{remark}
\newtheorem{remark}[thm]{Remark}

\numberwithin{equation}{section}


\newcommand{\QQ}{\mathbb{Q}}
\newcommand{\RR}{\mathbb{R}}
\newcommand{\ZZ}{\mathbb{Z}}
\newcommand{\NN}{\mathbb{N}}

\DeclareMathOperator{\cl}{\mathrm{cl}}
\DeclareMathOperator{\scl}{\mathrm{scl}}

\newcommand{\bG}{N}
\newcommand{\hG}{G}

\newcommand{\genus}{l}

\newcommand{\II}{\mathcal{I}}

\newcommand{\CC}{\mathbb{C}}

\newcommand{\Aut}{\mathrm{Aut}}
\newcommand{\IA}{\mathrm{IA}}
\newcommand{\Ker}{\mathrm{Ker}}
\renewcommand{\Im}{\mathrm{Im}}
\newcommand{\Symp}{\mathrm{Symp}}
\newcommand{\Ham}{\mathrm{Ham}}
\newcommand{\tHam}{\widetilde{\mathrm{Ham}}}
\newcommand{\Homeo}{\mathrm{Homeo}}
\newcommand{\h}{\Homeo_+(S^1)}
\renewcommand{\th}{\widetilde{\Homeo}_+(S^1)}

\newcommand{\flux}{\mathrm{Flux}}

\newcommand{\IAA}{\mathrm{IA}}
\newcommand{\QQQ}{\mathrm{Q}}

\newcommand{\HHH}{\mathrm{H}}

\newcommand{\GL}{\mathrm{GL}}

\newcommand{\PMod}{\mathrm{PMod}}
\newcommand{\Mod}{\mathrm{Mod}}
\newcommand{\ppi}{\Gamma}
\newcommand{\tae}{\xi}

\newcommand{\qm}{\phi}

\newcommand{\trot}{\widetilde{\mathrm{rot}}}

\newcommand{\rk}{\mathrm{rk}}
\newcommand{\intrk}{\mathrm{int}\textrm{-}\mathrm{rk}}

\newcommand{\genrk}{\mathrm{gen}\textrm{-}\mathrm{rk}}

\newcommand{\VVV}{\mathrm{V}}
\newcommand{\WWW}{\mathrm{W}}

\makeatletter
\newsavebox{\@brx}
\newcommand{\llangle}[1][]{\savebox{\@brx}{\(\m@th{#1\langle}\)}%
  \mathopen{\copy\@brx\kern-0.5\wd\@brx\usebox{\@brx}}}
\newcommand{\rrangle}[1][]{\savebox{\@brx}{\(\m@th{#1\rangle}\)}%
  \mathclose{\copy\@brx\kern-0.5\wd\@brx\usebox{\@brx}}}
\makeatother

\allowdisplaybreaks[1]

\begin{document}

\title[Survey on invariant quasimorphisms and mixed scl]{Survey on invariant quasimorphisms and stable mixed commutator length}

\author[M. Kawasaki]{Morimichi Kawasaki}
\address[Morimichi Kawasaki]{Department of Mathematics, Faculty of Science, Hokkaido University, North 10, West 8, Kita-ku, Sapporo, Hokkaido 060-0810, Japan}
\email{kawasaki@math.sci.hokudai.ac.jp}

\author[M. Kimura]{Mitsuaki Kimura}
\address[Mitsuaki Kimura]{Department of Mathematics, Kyoto University, Kitashirakawa Oiwake-cho, Sakyo-ku, Kyoto 606-8502, Japan}
\email{mkimura@math.kyoto-u.ac.jp}

\author[S. Maruyama]{Shuhei Maruyama}
\address[Shuhei Maruyama]{School of Mathematics and Physics, College of Science and Engineering, Kanazawa University, Kakuma-machi, Kanazawa, Ishikawa, 920-1192, Japan}
\email{smaruyama@se.kanazawa-u.ac.jp}

\author[T. Matsushita]{Takahiro Matsushita}
\address[Takahiro Matsushita]{Department of Mathematical Sciences, Faculty of Science, Shinshu University, Matsumoto, Nagano, 390-8621, Japan}
\email{matsushita@shinshu-u.ac.jp}

\author[M. Mimura]{Masato Mimura}
\address[Masato Mimura]{Mathematical Institute, Tohoku University, 6-3, Aramaki Aza-Aoba, Aoba-ku, Sendai 9808578, Japan}
\email{m.masato.mimura.m@tohoku.ac.jp}

\makeatletter
\@namedef{subjclassname@2020}{%
\textup{2020} Mathematics Subject Classification}
\makeatother

\keywords{quasimorphisms, invariant quasimorphisms, stable commutator lengths, stable mixed commutator lengths}
\subjclass[2020]{Primary 20F65; Secondary 20J06, 70H15, 20E36, 20F12}

\begin{abstract}
A homogeneous quasimorphism $\phi$ on a normal subgroup $N$ of $G$ is said to be $G$-invariant if $\phi(gxg^{-1}) = \phi(x)$ for every $g \in G$ and for every $x \in N$. Invariant quasimorphisms have naturally appeared in symplectic geometry and the extension problem of quasimorphisms. Moreover, it is known that the existence of non-extendable invariant quasimorphisms is closely related to the behavior of the stable mixed commutator length $\scl_{G,N}$, which is a certain generalization of the stable commutator length $\scl_G$.

In this survey, we review the history and recent developments of invariant quasimorphisms and stable mixed commutator length.
The topics we treat include several examples of invariant quasimorphisms, Bavard's duality theorem for invariant quasimorphisms, Aut-invariant quasimorphisms, and the estimation of the dimension of spaces of non-extendable quasimorphisms. We also mention the extension problem of partial quasimorphisms.
\end{abstract}

\maketitle

\tableofcontents

\section{Introduction}

\subsection{Quasimorphisms and commutator length}

A {\it quasimorphism} on a group $G$ is a real-valued function $\phi \colon G \to \RR$ on $G$ satisfying
\[D(\phi) := \sup \{ |\phi(xy) - \phi(x) - \phi(y)| \; | \; x,y \in G\} < \infty.\]
We call $D(\phi)$ the {\it defect} of the quasimorphism $\phi$. A quasimorphism $\phi$ on $G$ is said to be \textit{homogeneous} if $\phi(x^n) = n \cdot \phi(x)$ for every $x \in G$ and for every integer $n$.
The (homogeneous) quasimorphisms are closely related to the second bounded cohomology group $\HHH^2_b(G)=\HHH^2_b(G;\RR)$ (we always consider real coefficient in this survey), and have been extensively studied in geometric group theory, symplectic geometry and low dimensional topology (see \cite{Ca}, \cite{Mon01}, \cite{Fr}, and \cite{PR}).

Let $\QQQ(G)$ denote the real vector space consisting of homogeneous quasimorphisms on $G$, and $\HHH^1(G) = \HHH^1(G ; \RR)$ the real vector space consisting of homomorphisms from $G$ to $\RR$.
If $G$ is amenable, then $\QQQ(G)$ coincides with the space $\HHH^1(G)$ of homomorphisms from $G$ to $\RR$ (for example, see \cite[Proposition 2.65]{Ca} and \cite[Corollary 3.8]{Fr}).
Contrastingly, when $G$ is a non-elementary hyperbolic group, then the quotient $\QQQ(G) / \HHH^1(G)$ is infinite-dimensional \cite{EpFu}.
Thus, the space $\QQQ(G)/\HHH^1(G)$ (and hence $\QQQ(G)$) is sometimes infinite dimensional and is a difficult object to comprehend.

For $x \in [G,G]$, the \emph{commutator length}
$\cl_{G}(x)$ of $x$ is the smallest integer $n$ such that there exist $n$ commutators whose product is $x$.
The \emph{stable commutator length} $\scl_{G}(x)$ of $x \in[G,G]$ is defined by $\scl_{G}(x)=\lim\limits_{n\to \infty} \frac{\cl_{G}(x^n)}{n}$.
Commutator lengths and stable commutator lengths have been extensively studied in the following fields in geometric topology.

\begin{enumerate}[(1)]
  \item the commutator length on the mapping class group is related to the existence of Lefschetz fibrations (for example, see \cite{EK}, \cite[Subsection 3.6.1]{Ca}).
  \item the commutator length on groups of diffeomorphisms is well studied and related to the existence of transverse foliations (for example, see \cite{MR656217}, \cite{BIP}, \cite{Tsuboi12}, \cite{Tsuboi13} and \cite{Tsuboi17}).
  \item the (stable) commutator length on the fundamental group of a 3-manifold is related to geometric structures on the manifold, such as the Thurston norm, taut foliations, and slithering around the circle (see  \cite{math/9712268}, \cite[Section 4.1]{Ca}).
 \item  the stable commutator length is used to construct prescribed values of the simplicial volume (see \cite{FFL21}, \cite{HL19}, \cite{HL21a}, \cite{HL21b}, \cite{Loh23}).
\end{enumerate}

Many topics related to $\scl$ are discussed in detail in Calegari's book \cite{Ca}.
For other recent extrinsic applications of $\scl$, see \cite{IMT19},  \cite{MP19} and \cite{MP20} for instance.

The following deep relationship between quasimorphisms and $\scl$ is known as \textit{Bavard's duality}.
\begin{thm}[\cite{Bav}]\label{original Bavard}
For every $x \in [G,G]$,
  \[ \scl_{G}(x) = \sup_{\phi \in \QQQ(G)-\HHH^1(G)} \frac{|\phi(x)|}{2D(\phi)}. \]
   Here we set the supremum in the right-hand side of the above equality to be zero if $\QQQ(G) = \HHH^1(G)$.
\end{thm}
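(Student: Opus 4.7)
My plan is to prove the two inequalities separately. The forward direction is elementary, while the reverse direction requires a Hahn--Banach duality argument.

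For the forward direction ($\scl_G(x) \geq |\phi(x)|/(2 D(\phi))$ for every $\phi \in \QQQ(G)$), I would first establish two standard properties of a homogeneous quasimorphism $\phi$. First, $\phi$ is conjugation invariant: indeed $|\phi(gx^n g^{-1}) - \phi(x^n)| \leq 2 D(\phi)$, and dividing by $n$ before letting $n \to \infty$ gives $\phi(gxg^{-1}) = \phi(x)$ by homogeneity. Second, $|\phi([a,b])| \leq D(\phi)$, which follows by writing $[a,b] = (aba^{-1}) \cdot b^{-1}$ and applying conjugation invariance together with $\phi(b^{-1}) = -\phi(b)$. Now for any expression $x^n = [a_1,b_1] \cdots [a_c,b_c]$ with $c = \cl_G(x^n)$, iterated use of the defect inequality yields $|\phi(x^n)| \leq (2c-1) D(\phi)$. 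Dividing by $n$ and sending $n \to \infty$ gives $|\phi(x)| \leq 2 \scl_G(x) D(\phi)$, which is the desired bound.

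The reverse direction is the substantive part. My plan is to reformulate $\scl_G$ as a seminorm on a quotient of $\RR[G]$ and then extract an extremal quasimorphism as a dual functional. Let $C_1 := \RR[G]$, and let $W \subset C_1$ be the subspace spanned by the relations $g^n - n g$ and $hgh^{-1} - g$ for $g, h \in G$ and $n \in \ZZ$. Equip the quotient $B_1 := C_1/W$ with the quotient $\ell^1$-norm. Using the bar $2$-chain complex of $G$, one checks (up to a universal constant) that $\scl_G(x)$ equals the infimum of $\|\xi\|_1$ as $\xi$ ranges over classes in $B_1$ whose lift to $C_1$ differs from $x$ by a bar boundary. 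Hahn--Banach then produces a continuous linear functional $f$ on $B_1$ of minimum operator norm that takes a prescribed value on the class of $x$. Pulling $f$ back to a function $\phi: G \to \RR$, the quotient relations in $W$ automatically force $\phi$ to be homogeneous and conjugation invariant, while the operator norm of $f$ translates---via the bar boundary identity $\partial [g|h] = [h] - [gh] + [g]$---into a bound on the defect $D(\phi)$. The resulting $\phi$ is the desired homogeneous quasimorphism; note that the restriction to $\phi \notin \HHH^1(G)$ is automatic, since a nonzero homomorphism would force $x \notin [G,G]$.

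The main obstacle will be the precise duality identification: showing that the $\ell^1$-seminorm on $B_1$ really computes $4 \scl_G$ (the upper bound is easy, the lower bound requires a surgery-type argument à la Calegari to turn a general efficient chain into one of commutator form), and that bounded dual functionals correspond bijectively to classes of homogeneous quasimorphisms with the correct proportionality between operator norm and $D(\phi)$. Tracking these constants and verifying that the $\ell^1$-infimum is actually attained (or suitably approximated) by commutator expressions is the technical heart of the argument; once this machinery is in place, the proof reduces to a direct application of Hahn--Banach.
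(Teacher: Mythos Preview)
Your forward direction is correct and standard.

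For the reverse direction, your Hahn--Banach strategy is the right idea, but the specific duality you set up is broken. The quotient $\ell^1$-norm on $B_1 = C_1/W$ is identically zero: since $g^n - n g \in W$, the class of $g$ equals the class of $\tfrac{1}{n} g^n$, which has $\ell^1$-norm $\tfrac{1}{n}$ in $C_1$; letting $n \to \infty$ shows every class has norm zero. Hence there are no nonzero continuous linear functionals on $B_1$, and the Hahn--Banach step produces nothing. The same issue prevents you from reading off a defect bound: the operator norm of a functional on $C_1/W$ is not the defect $D(\phi)$ but (at best) a supremum of values of $\phi$, which is controlled by an entirely different quantity.

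The paper's route (following Bavard and Calegari, and carried out in Section~\ref{prf of bavard} in the more general mixed setting) avoids this by working with \emph{not necessarily homogeneous} quasimorphisms and a different normed space. One takes $B_1(G) = \partial_2 C_2(G) \cong C_2(G)/Z_2(G)$ with the quotient $\ell^1$-norm coming from $C_2(G)$. A bounded linear functional on this space is then a bounded cochain in the image of $\delta^1$, i.e.\ $\delta\phi$ for some $\phi \in C^1(G)$ with $\|\delta\phi\|_\infty = D(\phi) < \infty$; thus the continuous dual is naturally $\widehat{\QQQ}(G)/\HHH^1(G)$ with the defect as norm. One then proves $\lim_n \|x^n\|_{B_1}/n = 4\,\scl_G(x)$ (the nontrivial inequality uses the geometric interpretation of $\cl_G$ via surfaces), applies Hahn--Banach to obtain an extremal quasimorphism, and only \emph{afterwards} homogenizes it---homogenization does not increase the defect and does not change the value on $x$. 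Building homogeneity into the space from the start, as you attempt, collapses the norm; the correct fix is to postpone homogeneity to the last step.
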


\subsection{Invariant quasimorphisms and mixed commutator length}
In this survey, two main objects are considered: the first is the \emph{invariant quasimorphism}.

Let $N$ be a normal subgroup of $G$.
A quasimorphism $\phi \colon N\to \RR$ is said to be \emph{$G$-invariant} if $\phi(gxg^{-1})=\phi(x)$ for every $g\in G$ and every $x\in N$.
Let $\QQQ(N)^G$ denote the real vector space consisting of $G$-invariant homogeneous quasimorphisms on $N$.
Note that $\QQQ(G)^G=\QQQ(G)$:
every homogeneous quasimorphism on $G$ is $G$-invariant (Lemma \ref{basic lemma}).

One of the origins of the concept of invariant quasimorphism is symplectic geometry.
A symplectic manifold has several natural transformation groups and so the concept of invariant quasimorphism naturally appears (see Section \ref{prehistory in symp}).
In some context, $\mathrm{Aut}(G)$-invariant quasimorphism has been studied (see Section \ref{aut section}).

As homogeneous quasimorphisms are closely related to stable commutator lengths,
invariant homogeneous quasimorphisms are related to the stabilization of a certain word length. This word length is the \emph{mixed commutator length} taken note of in \cite{KK}. This is the second main object in this survey.

An element of the form $[g,x] = gxg^{-1}x^{-1}$ with $g\in G$ and $x\in N$ is called a \emph{$(G,N)$-commutator} or a  \emph{mixed commutator}; $[G,N]$ is the subgroup generated by mixed commutators, and it is called the \emph{$(G,N)$-commutator subgroup} or the \emph{mixed commutator subgroup}.
Note that since $N$ is normal, $[G,N]$ is a normal subgroup of $G$ and contained in $N$.
The \emph{$(G,N)$-commutator length} or the \emph{mixed commutator length} $\cl_{G,N}$ is the word length on $[G,N]$ with respect to the set of mixed commutators. That is, for $x\in [G,N]$, $\cl_{G,N}(x)$ is the smallest integer $n$ such that there exist $n$ mixed commutators whose product is $x$.
In the case of $N=G$, the notions of $[G,N]$ and $\cl_{G,N}$ coincide with those of the \emph{commutator subgroup} $[G,G]$ and the \emph{commutator length} $\cl_G$, respectively.

We also define the \emph{stable mixed commutator length} $\scl_{G,N}(x)$ of $x \in[G,N]$ by $\scl_{G,N}(x)=\lim\limits_{n\to \infty} \frac{\cl_{G,N}(x^n)}{n}$.
We have the Bavard duality for $\hG$-invariant quasimorphisms and mixed commutator length.

\begin{thm}[\cite{KK},\cite{KKMM1}] \label{thm:bavard}
 For every $x \in [\hG,\bG]$,
  \[ \scl_{\hG,\bG}(x) = \sup_{\phi \in \QQQ(N)^{\hG}-\HHH^1(N)^{G}} \frac{|\phi(x)|}{2D(\phi)}. \]
   Here we set the supremum in the right-hand side of the above equality to be  zero if $\QQQ(N)^G = \HHH^1(N)^G$.
\end{thm}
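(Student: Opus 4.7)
The plan is to adapt the filling-norm proof of the classical Bavard duality (Theorem \ref{original Bavard}) to the invariant setting, establishing the two inequalities separately.

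For the easier inequality $\scl_{\hG,\bG}(x) \geq |\phi(x)|/(2D(\phi))$, the key point is that $\hG$-invariance sharpens the defect estimate on a single mixed commutator. For $\phi \in \QQQ(\bG)^{\hG}$ and a mixed commutator $c = [g,n]$, I rewrite $c = (gng^{-1}) \cdot n^{-1}$ and apply the defect inequality once; then $\hG$-invariance yields $\phi(gng^{-1}) = \phi(n)$ and homogeneity yields $\phi(n^{-1}) = -\phi(n)$, so $|\phi(c)| \leq D(\phi)$. Iterating the defect inequality over a decomposition of $x$ into $\cl_{\hG,\bG}(x)$ mixed commutators yields $|\phi(x)| \leq (2\cl_{\hG,\bG}(x) - 1) D(\phi)$; substituting $x^n$, using homogeneity to obtain $|\phi(x^n)| = n|\phi(x)|$, dividing by $n$ and passing to the limit gives the claimed bound.

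For the reverse inequality, I would introduce a chain space tailored to mixed commutators. Let $\mathcal{C}$ be the real vector space freely generated by symbols $\langle g, n\rangle$ with $(g,n) \in \hG \times \bG$, equipped with the $\ell^1$-norm and quotiented by relations that encode $\hG$-conjugation equivariance together with the chain-level commutator identities; the boundary map $\partial \langle g,n\rangle = [g,n] \in [\hG,\bG]$ then induces a filling seminorm $\|\cdot\|_{\mathrm{fill}}$ on $[\hG,\bG]$. A standard limiting argument over decompositions of high powers $x^n$ into mixed commutators identifies $\|x\|_{\mathrm{fill}}$ with $2\scl_{\hG,\bG}(x)$. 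An application of the Hahn-Banach theorem then extends the linear functional $\lambda x \mapsto \lambda \|x\|_{\mathrm{fill}}$ on the line $\RR \cdot x$ to a bounded linear functional $\Phi$ on $\mathcal{C}$ of dual norm at most $1$.

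The main obstacle is recasting $\Phi$ as an element of $\QQQ(\bG)^{\hG}$ with the right defect. The conjugation relations imposed on $\mathcal{C}$ force the induced function $\widetilde{\phi}\colon \bG \to \RR$ to be $\hG$-invariant, while the commutator relations translate the norm bound on $\Phi$ into a quasimorphism inequality with defect at most the dual norm of $\Phi$. Homogenization $\phi(n) := \lim_{k\to\infty} \widetilde{\phi}(n^k)/k$ preserves $\hG$-invariance under the limit and yields $\phi \in \QQQ(\bG)^{\hG}$ satisfying $|\phi(x)|/(2D(\phi)) \geq \scl_{\hG,\bG}(x)$. In the degenerate case $\QQQ(\bG)^{\hG} = \HHH^1(\bG)^{\hG}$, every candidate $\phi$ is a $\hG$-invariant homomorphism and hence vanishes on $[\hG,\bG]$, so the easy inequality forces $\scl_{\hG,\bG}(x) = 0$, matching the stated convention.
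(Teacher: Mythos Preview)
Your easy inequality is fine. The hard direction, however, has two real gaps.

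First, the chain space $\mathcal{C}$ you describe does not dualize to functions on $\bG$. Your basis consists of symbols $\langle g,n\rangle$, so a bounded linear functional $\Phi$ on $\mathcal{C}$ is a function on $\hG\times\bG$, not on $\bG$; you never say how $\widetilde{\phi}\colon\bG\to\RR$ is extracted from $\Phi$, and the vague ``conjugation relations'' and ``commutator relations'' do not fix this. The paper handles this by working with a genuinely different object: $\nu_{\bG}$-quasimorphisms, which are functions on all of $\hG$ satisfying the defect inequality only when one argument lies in $\bG$. These are precisely the continuous dual of $B_1'=\partial C_2'$, where $C_2'\subset C_2(\hG)$ is spanned by pairs $(g_1,g_2)$ with $g_1\in\bG$ or $g_2\in\bG$. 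One then restricts to $\bG$, observes the restriction is $\hG$-\emph{quasi}-invariant, and homogenizes to land in $\QQQ(\bG)^{\hG}$. This detour through functions on $\hG$ is not a cosmetic choice; it is what makes Hahn--Banach output the right kind of object.

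Second, the ``standard limiting argument'' identifying your filling norm with a multiple of $\scl_{\hG,\bG}$ is not standard here. The inequality $\mathrm{fill}\leq C\cdot\scl_{\hG,\bG}$ is easy, but the reverse requires turning a small-$\ell^1$-norm $2$-chain with boundary $x$ into a short mixed-commutator expression for a power of $x$. In the classical case this goes through surfaces; in the mixed case one needs the notion of a $(G,N)$-simplicial surface (a triangulated surface with $G$-labelled edges such that in each triangle at least one specified edge is labelled in $N$) together with the geometric interpretation $\cl_{\hG,\bG}(x)=\min\{\mathrm{genus}(\Sigma)\}$ over such surfaces with boundary $x$ (Proposition~\ref{prop interpretation}). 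This is the substantive new ingredient, and your sketch does not supply it.
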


Note that Theorem \ref{thm:bavard} in the case $G=N$ is exactly Theorem \ref{original Bavard} (see Figure \ref{fig:bavard}).

We note that symplectic geometry provides strong (and original) motivation to study invariant quasimorphisms. The concept of stable mixed commutator lengths is closely related to that of invariant quasimorphisms via Theorem \ref{thm:bavard}. For this reason, in this survey we regard mixed commutator lengths as an object of geometric and dynamical interests, as well as invariant quasimorphisms.

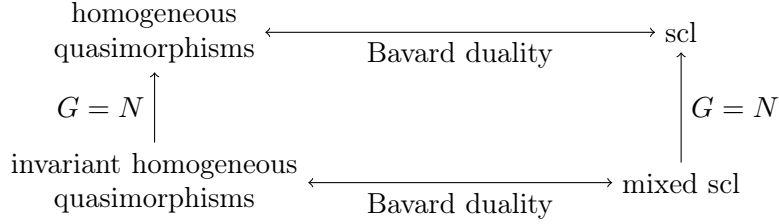
\begin{figure}[t]
  \centering
    \begin{tikzpicture}[auto]
    \node[align=center] (01) at (0, 2) {homogeneous \\ quasimorphisms};
    \node (11) at (7, 2) {scl};
    \node[align=center] (00) at (0, 0) {invariant homogeneous \\ quasimorphisms};
    \node (10) at (7, 0) {mixed scl};
    \draw[<-] (01) to node[left] {$G=N$} (00);
    \draw[<->] (01) to node[below] {Bavard duality} (11);
    \draw[<-] (11) to node {$G=N$} (10);
    \draw[<->] (00) to node[below] {Bavard duality} (10);
    \end{tikzpicture}
    \caption{Bavard duality}  \label{fig:bavard}
\end{figure}

\subsection{Extension problem of quasimorphisms}
The first central problem that we treat in this survey is the extension problem of quasimorphisms. In this context, invariant homogeneous quasimorphisms naturally appear. We start with the following notion:

\begin{definition}
  A homogeneous quasimorphism $\phi\colon N\to\RR$ on $N$ is said to be  \emph{extendable to $G$} if there exists a homogeneous quasimorphism $\hat{\phi}\colon G\to\RR$ on $G$ such that $\hat{\phi}|_N=\phi$.
\end{definition}

Then it is natural to ask whether there exist homogeneous quasimorphisms that are not extendable to $G$. However, this is not an interesting problem in general. Indeed, the following basic fact implies that extendable homogeneous quasimorphisms must be $G$-invariant.

\begin{lem}\label{basic lemma}
Let $G$ be a group.
Every homogeneous quasimorphism $\phi$ on $G$ is $G$-invariant, \emph{i.e.},
\[ \phi(ghg^{-1}) = \phi(h)\]
for all $g,h \in G$.
\end{lem}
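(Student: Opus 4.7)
The plan is to exploit the basic identity $(ghg^{-1})^n = gh^n g^{-1}$ combined with homogeneity to force the difference $\phi(ghg^{-1}) - \phi(h)$ to be zero.

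First, I would record two elementary consequences of homogeneity that I will use repeatedly. Setting $n=0$ in the homogeneity condition gives $\phi(e) = 0$, and setting $n=-1$ gives $\phi(g^{-1}) = -\phi(g)$ for every $g\in G$. Next, applying the defect inequality twice yields, for any $a,b,c \in G$,
\[ |\phi(abc) - \phi(a) - \phi(b) - \phi(c)| \leq 2 D(\phi). \]

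The main step is then the following estimate. Fix $g, h \in G$ and an integer $n \geq 1$. Using $(ghg^{-1})^n = gh^n g^{-1}$, the above three-term defect bound with $a=g$, $b=h^n$, $c=g^{-1}$, together with $\phi(g^{-1}) = -\phi(g)$ and $\phi(h^n) = n\phi(h)$, give
\[ |\phi(gh^n g^{-1}) - n\phi(h)| \leq 2D(\phi). \]
On the other hand, homogeneity yields $\phi((ghg^{-1})^n) = n \phi(ghg^{-1})$. Combining these,
\[ |n\phi(ghg^{-1}) - n\phi(h)| \leq 2D(\phi) \]
for every positive integer $n$. Dividing by $n$ and letting $n \to \infty$ forces $\phi(ghg^{-1}) = \phi(h)$, as desired.

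There is no serious obstacle; the only subtle point is remembering that homogeneity does not imply $\phi$ is a homomorphism, so one cannot simply distribute $\phi$ over the product $gh^n g^{-1}$. Instead, one must control the error by the defect uniformly in $n$, and then amplify by the exponent $n$ to conclude equality rather than merely an approximate identity.
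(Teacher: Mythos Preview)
Your proof is correct and follows essentially the same approach as the paper: both use $(ghg^{-1})^n = gh^ng^{-1}$, homogeneity, the identity $\phi(g)+\phi(g^{-1})=0$, and the two-fold defect bound for a triple product, then divide by $n$ and let $n\to\infty$. The only cosmetic difference is that the paper compresses these steps into a single displayed chain of (in)equalities.
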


\begin{proof}

For every positive integer $m$, we have
\begin{multline*}
  |\phi(g h g^{-1}) - \phi(h) |= \frac1m|\phi(g h^{m} g^{-1})-\phi(h^m) | \\ = \frac1m|\phi(g h^{m} g^{-1})-\phi(g) - \phi(h^{m}) - \phi(g^{-1})| \leq \frac{2D(\phi)}{m} .
\end{multline*}
By letting $m\to\infty$, we obtain the assertion.
\end{proof}

It is often easy to construct homogeneous quasimorphisms on $N$ that are not $G$-invariant. Such quasimorphisms are not extendable by Lemma~\ref{basic lemma}. Therefore, the natural problem we should consider is as follows:

\begin{problem}\label{large problem 1}
Let $N$ be a normal subgroup of $G$ and $\phi\colon N\to\RR$ be a $G$-invariant homogeneous quasimorphism on $N$.
Is $\phi$ extendable to $G$ or not?
\end{problem}

Let $i^\ast \colon \QQQ(G) \to \QQQ(N)$ be the restriction map. Then Lemma~\ref{basic lemma} implies that the image $i^* (\QQQ(G))$ is contained in $\QQQ(N)^G$. Consider the quotient space
\[ \mathrm{V}(G,N) := \QQQ(N)^G / i^\ast \QQQ(G).\]
Then Problem~\ref{large problem 1} is equivalent to the following problem: given a $G$-invariant homogeneous quasimorphism $\phi$, determine whether $\phi$ vanishes  in  $\mathrm{V}(G,N)$ or not.

A $G$-invariant homomorphism on $N$ seems `trivial' as a quasimorphism in $\QQQ(N)^G$; also, when we apply the Bavard duality theorem for stable mixed commutator lengths (see Theorem \ref{thm:bavard}), exactly elements in $\HHH^1(N)^G$ behave trivially.
Thus, we also consider the following variant of Problem \ref{large problem 1}.

\begin{problem}\label{large problem 3}
Let $N$ be a normal subgroup of $G$ and $\phi\colon N\to\RR$ a $G$-invariant homogeneous quasimorphism on $N$.
Can $\phi$ be represented as the sum of a homogeneous quasimorphism extendable to $G$
and a $G$-invariant homomorphism on $N$?
\end{problem}

As in the case of $\mathrm{V}(G,N)$, consider the quotient space
\[ \WWW(G,N) := \QQQ(N)^G / (\HHH^1(N)^G + i^\ast \QQQ(G)).\]
Then Problem~\ref{large problem 3} is equivalent to the following problem: given a $G$-invariant homogeneous quasimorphism $\phi$, determine whether $\phi$ vanishes  in  $\WWW(G,N)$ or not.
This space $\WWW(G,N)$ is closely related to the comparison problem of $\scl_G$ and $\scl_{G,N}$, which is the second main problem in this survey and is discussed in the next subsection (see also Proposition~\ref{prop equivalence criterion}).

As we will argue in Remark \ref{rmk:acyl_hyp}, the space $\QQQ(N)^G/\HHH^1(N)^G$ is infinite dimensional in several situations related to acylindrically hyperbolic groups.
Nevertheless, the dimensions of the spaces $\VVV(G,N)$ and $\WWW(G,N)$ can be \textit{finite} in natural situations
(Corollary \ref{cor:fin_dim}).
Furthermore, these spaces can be \emph{non-zero finite-dimensional} in several interesting cases (see Section~\ref{sec:ext_prob}).

We call Problems \ref{large problem 1} and \ref{large problem 3} the \emph{extension problems} of an invariant quasimorphism $\phi$.



We note that the extension problems have some relation to study of the corresponding short exact sequence
\begin{align*}
  1\longrightarrow N \stackrel{i}{\longrightarrow} G\stackrel{p}{\longrightarrow} \Gamma \longrightarrow 1. 
\end{align*}
More precisely, we prove that if the above sequence \emph{virtually splits}, then $\mathrm{V}(G,N)$ vanishes, \emph{i.e.}, every element in $\QQQ(N)^{G}$ is extendable; see Theorem \ref{virtually split}. In particular,  the space $\mathrm{V}(G,N)$ \emph{always} vanishes for a pair $(G,N)$ with $\Gamma=\ZZ$.
The situation is \emph{completely different} if $\Gamma=\ZZ^2$: there exist several interesting examples of $(G,N)$ with $\Gamma=\ZZ^2$ for which the corresponding short exact sequence does \emph{not} virtually split.
In fact, we study \emph{Py's Calabi quasimorphism} in this setting, and obtain an application of independent interest in symplectic geometry; see Section \ref{history after} (Theorem \ref{thm A}).

To conclude this subsection, we remark on the relation between the extension problems and an open problem for Lex groups.
A group $\Gamma$ is said to be \emph{Lex} if, for every group $G$ and for every surjection $p \colon G \to \Gamma$, the pullback $p^* \colon \HHH_b^{\bullet}(\Gamma) \to \HHH_b^{\bullet}(G)$ is injective in all degrees.
The concept of Lex groups was first introduced (in a different language) and studied in \cite{MR1896181}.
There exist many Lex groups, but there are no known examples of non-Lex groups; see \cite[Subsection 3.2]{FLM1}.
By the exact sequence of bounded cohomology groups (\cite{MR1338286}, \cite{Mon01})
\[
  0 \to \HHH_b^2(\Gamma) \to \HHH_b^2(G) \xrightarrow{i^*} \HHH_b^2(N)^G \to \HHH_b^3(\Gamma) \to \HHH_b^3(G),
\]
the non-injectivity of $\HHH_b^3(\Gamma) \to \HHH_b^3(G)$ corresponds to the existence of a non-extendable second bounded cohomology class in $\HHH_b^2(N)^{G}$.
In other words, the non-injectivity of $\HHH_b^3(\Gamma) \to \HHH_b^3(G)$ is equivalent to the non-vanishing of the quotient space $\HHH_b^2(N)^G/i^*\HHH_b^2(G)$.

Note that the coboundary operator $\delta^1 \colon C^1(N) \to C^2(N)$ (see Subsection \ref{subsec:rotation_invq}) induces a well-defined map $\WWW(G,N) \to \HHH_b^2(N)^G/i^*\HHH_b^2(G)$.
Hence the non-zero element of $\WWW(G,N)$ potentially provides a non-zero element of $\HHH_b^2(N)^G/i^*\HHH_b^2(G)$, which implies that the group $\Gamma$ is not Lex (see also around Open Problem \ref{prob:nonlex} for another formulation).
Unfortunately, for all known examples of pairs of groups $(G,N)$ which admits a non-zero element $\WWW(G,N)$, the bounded cohomology group of $\Gamma$ vanishes in all degrees and hence the quotient space $\HHH_b^2(N)^G/i^*\HHH_b^2(G)$ also vanishes.
Therefore the known non-zero elements of $\WWW(G,N)$ vanish in $\HHH_b^2(N)^G/i^*\HHH_b^2(G)$ and do not supply the examples of non-Lex groups.

\subsection{Comparison problem of $\scl_G$ and $\scl_{G,N}$}
We have two functions $\scl_G$ and $\scl_{G,N}$ on $[G,N]$, which are closely related to $\QQQ(G)$ and $\QQQ(N)^G$, respectively.
The second main problem treated in this survey is the comparison problem between $\scl_G$ and $\scl_{G,N}$, \emph{i.e.}, how different are $\scl_G$ and $\scl_{G,N}$? In general, it is difficult to determine the precise values of $\scl_G(x)$ and $\scl_{G,N}(x)$. However, we can sometimes describe a difference in the global behaviors of $\scl_G$ and $\scl_{G,N}$; $\scl_G$ and $\scl_{G,N}$ are bi-Lipschitzly equivalent or not.

Let $\nu_0$ and $\nu_1$ be two real-valued functions on a set $X$. We say that $\nu_0$ and $\nu_1$ are \emph{bi-Lipschitzly equivalent}, (or \emph{equivalent} for short) if there exist positive constants $A$ and $B$ such that $A \nu_0 \le \nu_1 \le B \nu_0$. We say that $\scl_G$ and $\scl_{G,N}$ (or $\cl_G$ and $\cl_{G,N}$) are \emph{equivalent} if they are equivalent as functions on $[G,N]$.


\begin{problem}\label{large problem 2}
Let $N$ be a normal subgroup of $G$.
Are $\scl_G$ and $\scl_{G,N}$ \textup{(}or $\cl_\hG$ and $\cl_{\hG, \bG}$\textup{)} equivalent on $[\hG,\bG]$?
More strongly, do they coincide on $[\hG,\bG]$?
\end{problem}

We call Problem \ref{large problem 2} the \emph{comparison problem between \textup{(}stable\textup{)} commutator length and \textup{(}stable\textup{)} mixed commutator length.}
We note that Calegari and Zhuang \cite{CZ} defined other variants of stable commutator length and considered the comparison problem between the usual stable commutator length and their variants.


The extension problem and the comparison problem are closely related (see Proposition~\ref{prop equivalence criterion}). For example, $\WWW(G,N) = 0$ (see Section~1.3) implies that $\scl_G$ and $\scl_{G,N}$ are equivalent (see Theorem~\ref{equiv thm ex}).
Note that there are several examples of $(G,N)$ where $\scl_G$ and $\scl_{G,N}$ are not equivalent (see the end of Section \ref{scl sec}).


Finally, we also mention the comparison problem between $\scl_N$ and $\scl_{G,N}$, not between $\scl_G$ and $\scl_{G,N}$. In general, $\scl_N$ and $\scl_{G,N}$ are greatly different. One reason of this is that $\scl_{G,N}$ is $G$-invariant while $\scl_N$ is not in general.
There are many examples where $\scl_{G,N}$ and $\scl_N$ are not equivalent on $[N,N]$, even when $N$ is of finite index in $G$ (see \cite[Proposition 3.2]{KK}, \cite[Proposition 7.19]{KKMM1}).


\subsection*{Notation and terminology}
Throughout this survey, let $G$ be a group, $N$ a normal subgroup of $G$, and set $\Gamma = G/N$.
Let $i \colon N \to G$ denote the inclusion map and $p \colon G \to \Gamma$ denote the natural projection.
Namely, we have the following short exact sequence
\begin{align*}
  1\longrightarrow N \stackrel{i}{\longrightarrow} G\stackrel{p}{\longrightarrow} \Gamma \longrightarrow 1. \tag{$\star$}
  \end{align*}
  Let $\Sigma_{\genus}$ denote a closed connected oriented surface of genus $\genus$.
We assume surfaces to be connected in this survey.

\subsection*{Organization of the paper}
In Section~2, we describe the motivation of the study of invariant homogeneous quasimorphisms from symplectic geometry.
 In Section~3, we review several works related to invariant quasimorphisms after 2010. In Section~4, we discuss known explicit examples of invariant quasimorphisms and invariant partial quasimorphisms. 
In Section~5, we review several results on Aut-invariant quasimorphisms and their relation to invariant quasimorphisms.
 In Section~6, we provide an outline of the proof of Bavard's duality theorem of stable mixed commutator lengths (Theorem~\ref{thm:bavard}), following \cite{KKMM1}. 
In Section~7, we review the results in \cite{K2M3} related to the spaces $\VVV(G,N)$ and $\WWW(G,N)$. Section~8 is devoted to the comparison problem between $\scl_G$ and $\scl_{G,N}$. In Section~9, we treat concrete examples of a pair $(G,N)$ and discuss the extension and comparison problems in these cases. In Section~10, we discuss the comparison problem between $\cl_G$ and $\cl_{G,N}$. In Section~11, we discuss invariant partial quasimorphism, which is a generalization of invariant quasimorphisms in some sense.


\section{Prehistory of invariant quasimorphisms in symplectic geometry}\label{prehistory in symp}

Let $(M,\omega)$ be a symplectic manifold.
Then $(M,\omega)$ has several natural transformation groups, and here we consider the following three.
The first one is the group $\Symp(M,\omega)$ of symplectomorphisms with compact support, where a diffeomorphism is called a \textit{symplectomorphism} if it preserves the symplectic form $\omega$.
The second one is the identity component of $\Symp(M,\omega)$, which is denoted by $\Symp_0(M,\omega)$.
The third one is the group $\Ham(M,\omega)$ of Hamiltonian diffeomorphisms, which is defined to be $\Ham(M,\omega)=[\Symp_0(M,\omega),\Symp_0(M,\omega)]$.
Here, we do not explain the geometric definition of $\Ham(M,\omega)$, but we note that the concept of Hamiltonian diffeomorphism comes from the classical analytical dynamics.
We note that $\Ham(M,\omega)$ is a normal subgroup of the both of $\Symp(M,\omega)$ and $\Symp_0(M,\omega)$.
For basics of symplectic geometry, see \cite{Ban97}, \cite{MS}, and \cite{P01} for instance.
In the context of symplectic geometry, $\Symp(M,\omega)$-invariant homogeneous quasimorphisms on $\Ham(M,\omega)$ naturally appear.

Here, we explain the flux homomorphism, which detects the difference between $\Ham(M,\omega)$ and $\Symp_0(M,\omega)$.
Let $\widetilde{\Symp_0}(M, \omega)$ denote the universal covering of $\Symp_0(M ,\omega)$. Then an element of $\widetilde{\Symp_0}(M,\omega)$ can be considered as a homotopy class of a path in $\Symp_0(M,\omega)$ from the identity.
The {\it flux homomorphism} $\widetilde{\flux}_\omega \colon \widetilde{\Symp_0}(M, \omega) \to \HHH_c^1(M; \RR)$ is defined by
\[\widetilde{\flux}_\omega([\{\psi^t\}_{t\in[0,1]}])=\int_0^1[\iota_{X_t}\omega]dt,\]
where $\iota$ is the inner product, $X_t$ is the vector field generating the symplectic isotopy $\{\psi^t\}$ and $[\iota_{X_t}\omega]$ is the cohomology class represented by $\iota_{X_t}\omega$.
Here, $\HHH_c^{\bullet}(M; \RR)$ denotes the de Rham cohomology with compact support.
The image of $\pi_1(\Symp_0(M, \omega))$ $(\subset \widetilde{\Symp_0}(M,\omega)$) with respect to $\widetilde{\flux_\omega}$ is called the {\it flux group of $(M, \omega)$} and denoted by $\Gamma_\omega$
\footnote{The discreteness of the flux group is called the flux conjecture, which had been a long standing famous conjecture in symplectic geometry.
The flux conjecture was proved by Ono \cite{O} for every closed symplectic manifold.}
.
The flux homomorphism descends to a homomorphism
\[\flux_\omega \colon \Symp_0(M, \omega) \to \HHH_c^1(M ; \RR) / \Gamma_\omega, \]
which is also called the \emph{flux homomorphism}.
Then, it is known that the flux homomorphism $\flux_\omega$ is surjective
and $\Ham(M,\omega)=\mathrm{Ker}(\flux_\omega)$.
In particular, $\Ham(M,\omega)$ and $\Symp_0(M,\omega)$ coincide if  and only if $\HHH_c^1(M;\RR)=0$.
The flux homomorphism is a fundamental object in symplectic geometry and has been extensively studied by many authors
such as \cite{Ban}, \cite{LMP}, and \cite{Ke}.

\begin{example}\label{flux on torus}
Let $T^2$ be the $2$-dimensional torus $\RR^2/\ZZ^2$ with coordinates $(p,q)$ and $\omega_T$ a symplectic (area) form $dq\wedge dp$.
For a pair of real numbers $(a,b)$, define $\varphi_{a,b} \in \Symp_0(T^2,\omega_T)$ to be $\varphi_{a,b}(x,y)=(x+a,y+b)$.
Define loops $\gamma_1, \gamma_2 \colon [0,1] \to \Symp_0(T^2, \omega_T)$ by $\gamma_1(t)=\varphi_{t,0}$, $\gamma_2(t)=\varphi_{0,t}$, respectively.
Then, $\pi_1(\Symp_0(T^2, \omega_T))$ is an abelian group generated by $[\gamma_1]$ and $[\gamma_2]$.
We also have that $\widetilde{\flux}_\omega([\gamma_1])=[dp] \in \HHH^1(T^2;\RR) \cong \RR^2$, $\widetilde{\flux}_\omega([\gamma_2])=-[dq] \in \HHH^1(T^2;\RR) \cong \RR^2$.
Thus, $\HHH^1(T^2 ; \RR) / \Gamma_{\omega_T} \cong \RR^2/\ZZ^2$ and $\varphi_{a,b} \in \Ham(T^2,\omega_T)$ if and only if $(a,b) \in \ZZ^2$.
\end{example}

\begin{example}\label{flux on higher genus}
Let $\Sigma_\genus$ be a closed orientable surface whose genus $\genus$ is at least two and $\omega$ a symplectic form on $\Sigma_\genus$.
Then, $\Gamma_\omega$ is trivial (for example, see \cite[Subsection 7.2.B]{P01}) and hence the flux homomorphism $\flux_\omega$ is a map from $\Symp_0(\Sigma_\genus, \omega)$ to $\HHH^1(\Sigma_\genus ; \RR)$.
\end{example}

In symplectic geometry, a lot of quasimorphisms on (the universal covering of) $\Ham(M,\omega)$ have been constructed.
One of the first works is due to Barge and Ghys \cite{BG}.
They constructed a homogeneous quasimorphism on $\Ham(\RR^n,\omega_0)$, where $\omega_0$ is the standard symplectic form on $\RR^n$.

One of the famous works is due to Gambaudo and Ghys \cite{GG}.
They constructed various homogeneous quasimorphisms on $\Symp_0(\Sigma,\omega)$ not vanishing on
$\Ham(\Sigma,\omega)$, where $\Sigma$ is a compact orientable surface and $\omega$ is a symplectic form on $\Sigma$.

An important example of invariant quasimorphism appeared in Entov--Polterovich's work \cite{EP03}.
They constructed various homogeneous quasimorphisms on $\Ham(M,\omega)$ and proved that their quasimorphisms are $\Symp(M,\omega)$-invariant for some $(M,\omega)$ (for example, $(M,\omega)$ is the $n$-dimensional complex projective space $\CC P^n$ with the Fubini--Study form $\omega_{FS}$).
Their quasimorphisms are constructed as  a constant multiplication of the homogenization of Oh--Schwarz's spectral invariant, which is a real-valued function on the universal covering $\tHam(M,\omega)$ of the group $\Ham(M,\omega)$ of Hamiltonian diffeomorphisms on a closed symplectic manifold $(M,\omega)$ defined through the Hamiltonian Floer theory.
We will explain a sketch of their construction in Subsection \ref{contrcution of EP}.
For more precise description, see \cite{O05}, \cite{O15}, \cite{PR}.

They further generalized this result in \cite{EP06}.
They introduced the concept of partial quasimorphism, which is a generalization of the concept of quasimorphism (see Section \ref{partial section}).
Recall that in \cite{EP03} they proved that the homogenization of Oh--Schwarz's spectral invariant descends to $\Ham(M,\omega)$ and is a quasimorphism for some closed symplectic manifolds.
On the other hand, in \cite{EP06} they proved that the homogenization of Oh--Schwarz's spectral invariant is a partial quasimorphism on every closed symplectic manifold.
For example, the homogenization of Oh--Schwarz's spectral invariant on a closed orientable surface with genus at least one is not a quasimorphism but a partial quasimorphism.

As a surprising extrinsic application of invariant partial quasimorphism, one can prove strong non-displaceability of subsets of a symplectic manifold \cite{EP09}.
Before explaining it, we  introduce some notions. 
A subset $X\subset M$ is said to be \textit{non-displaceable} if  $\phi(X)\cap\overline{X} \neq \emptyset$ for every $\phi\in\Ham(M,\omega)$.
A subset $X\subset M$ is said to be \textit{strongly non-displaceable} if $\phi(X)\cap\overline{X} \neq \emptyset$ for every $\phi\in\Symp(M,\omega)$.
Non-displaceability of Lagrangian submanifold has been a traditional research topic in symplectic geometry after the famous work by Gromov \cite{Gr85} and Floer \cite{Fl88}.
Contrastingly, before Entov--Polterovich's work, proving strong non-displaceability has been a very difficult problem.\footnote{``detecting strong non-displaceabilty has at the moment the status of art rather than science'', Subsection 1.2.3 of \cite{EP09}.}

In Entov--Polterovich's work \cite{EP09},  they proved many closed subset of closed symplectic manifold are strongly non-displaceable.
The homogenization of Oh--Schwarz's spectral invariant is  $\Symp(M,\omega)$-invariant  for some cases;
this is one of the key properties in their argument.
The following two results are applications of their theory.

\begin{example}[{\cite[Example 1.21]{EP09}}]\label{EPeg}
Let $n$ be an even natural number, $M$ the Fermat quadratic $\{[z_0;z_1;\ldots;z_{n+1}]\in\CC P^{n+1} \,:\, -z_0^2+\sum_{j=1}^{n+1}z_j^2=0\}\subset\CC P^{n+1}$ and $\omega$ the restriction of the Fubini--Study form to $M$.
Then, every Lagrangian sphere (such as $M\cap\RR P^{n+1}$) is strongly non-displaceable in $M$.
\end{example}

\begin{example}[\cite{Ka19}]\label{union}\label{unioneg}
Let $(M_1,\omega_1)=(\CC P^n,\omega_{FS})$.
Let $X_1$ be either the Clifford torus $\{[z_0;z_1;\ldots;z_n]\in\CC P^n \,:\, |z_1|=\ldots=|z_n|\}$ or the $n$-dimensional real projective space $\RR P^n$.
Let $(M_2,\omega_2)=(T^2,\omega_T)$ (recall Example \ref{flux on torus}).
Let $X_2$ and $X_3$ be a meridian and a longitude in $M_2$, respectively.
Then, $X_1\times (X_2\cup X_3)$ is strongly non-displaceable in $M_1\times M_2$.
\end{example}

Since $\mathrm{id}_{\CC P^n} \times \varphi_{a,b}$ (see Example \ref{flux on torus}) displaces $X_1\times X_2$ and $X_1\times X_3$ in Example \ref{union} for $(a,b) \in (\RR\setminus \ZZ)^2$, $X_1\times X_2$ and $X_1\times X_3$  are not strongly non-displaceable  in $M_1\times M_2$.

Finally, we have a technical remark.
Partial quasimorphisms constructed by Entov and Polterovich have a useful property called the \textit{Calabi property} (Definition \ref{Calabi prop}).
This property plays an essential role in the proof of the above results on non-displaceability.
Existence of Calabi quasimorphism on a closed symplectic surface with positive genus had been an open problem and Py solved it in \cite{Py06}, \cite{Py06t}.
His quasimorphism in \cite{Py06} plays an important role in some of the author's work \cite{KK}, \cite{KKMM1}.
In the next section, we will explain these works (Theorems \ref{thm:py}, \ref{thm flux}).
We will describe the construction of that quasimorphism in Subsection \ref{contrcution of Py}.


\section{History of invariant quasimorphisms}\label{history after}


In this section, we review several works related to invariant quasimorphisms after  2010. 
The extension problem of quasimorphisms from a finite index subgroup was considered in several contexts
(\cite{BF09}, \cite{Ish}, \cite{Mal}).
As mentioned in \cite[Lemma 4.2]{Mal} for example, if $N$ is a finite index normal subgroup of $G$, then every $G$-invariant homogeneous quasimorphism on $N$ extends to $G$.
Indeed, for every $\phi \in \QQQ(N)^G$, we can construct a (not necessarily homogeneous) quasimorphism $\hat{\phi}$ on $G$ such that $\hat{\phi}|_N = \phi$ without increasing the defect: $D(\hat{\phi}) \leq D(\phi)$ (see the proof of \cite[Lemma 3.1]{Ish}).

In his Ph.D. thesis \cite[Remark 8.3]{IshidaThesis}, Ishida mentioned that if there exists a section homomorphism $\Gamma \to G$, then every $G$-invariant quasimorphism on $N$ extends to $G$ (see also \cite[Proposition 3.1]{KK}).
This fact was also obtained by a result of Shtern \cite[Theorem 3]{Sh}.
Shtern also provided the first example of non-extendable quasimorphism.
\begin{example}[{\cite[Example 1]{Sh}}] \label{eg:shtern}
Let $G$ be  the (continuous) Heisenberg group.
There exists a non-trivial $G$-invariant homomorphism $\phi$ on the commutator subgroup $[G,G]$.
Then, this quasimorphism $\phi$ is non-extendable to $G$ because $G$ is nilpotent, in particular amenable.
\end{example}


As another work on the extension problem, one of the authors \cite{Ka18} provided some observation on the extendability of partial quasimorphisms (see Conjecture \ref{partial conj}, Theorem \ref{ext partial}).

Brandenbursky and Marcinkowski constructed infinitely many invariant quasimorphisms \cite{BM}.
Their quasimorphisms are $\Aut(F_2)$-invariant quasimorphisms on $F_2 \cong \mathrm{Inn}(F_2)$.
After their work, Karlhofer (\cite{Karlh}, \cite{Karlh2}), and Fournier-Facio and Wade \cite{FW} generalized it (see Section \ref{aut section}).
We discuss the extension problem of these quasimorphisms in Section \ref{sec:ext_prob}.

Some of the authors \cite{KK} took note of the concept of the mixed commutator length and proved the Bavard duality theorem between the invariant quasimorphism and the mixed commutator length (Theorem \ref{thm:bavard}) under the assumption $N=[G,N]$.
They also proved that Py's Calabi quasimorphism \cite{Py06}, which is a $\Symp(\Sigma_\genus,\omega)$-invariant homogeneous quasimorphism on $\Ham(\Sigma_\genus,\omega)$, is non-extendable to $\Symp_0(\Sigma_\genus,\omega)$ (in particular, to $\Symp(\Sigma_\genus,\omega)$).
\begin{thm}[{\cite[Theorem 1.11]{KK}}] \label{thm:py}
Let $\Sigma_l$ be a closed orientable surface whose genus $l\geq 2$ and $\omega$ a symplectic form on $\Sigma_l$. Then Py's Calabi quasimorphism $\qm_P \colon \Ham(\Sigma_l, \omega) \to \RR$ is non-extendable to $ \Symp_0(\Sigma_l, \omega)$.
\end{thm}

In \cite{KKMM1} some of the authors proved the invariant version of the Bavard duality theorem (Theorem \ref{thm:bavard}) without any additional assumption on $(G,N)$ and also proved the following theorem (Theorem \ref{virtually split}).

\begin{definition}
    We say that sequence $(\star)$ in Section 1 \textit{virtually splits} if $p \colon G \to \Gamma$ admits a virtual section, \emph{i.e.}, there exists a subgroup $\Lambda$ of $\Gamma$ of finite index and a homomorphism $s\colon \Lambda \to \hG$ such that $p \circ s=\mathrm{id}_\Lambda$.
\end{definition}

\begin{thm}[{\cite[Proposition 1.6]{KKMM1}}] \label{virtually split}
If $(\star)$ virtually splits,
then every $G$-invariant homogeneous quasimorphism on $N$ is extendable to $G$.
\end{thm}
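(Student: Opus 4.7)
The plan is to construct the extension in two stages: first from $N$ up to the finite-index subgroup $H := p^{-1}(\Lambda) \subseteq G$, and then from $H$ up to $G$. As a preliminary reduction, replace $\Lambda$ by its core $\Lambda_0 = \bigcap_{\gamma \in \Gamma} \gamma \Lambda \gamma^{-1}$; this is still of finite index in $\Gamma$ and is normal, and $s$ restricts to a section on $\Lambda_0$. So we may assume $\Lambda \trianglelefteq \Gamma$. Then $H$ is a finite-index \emph{normal} subgroup of $G$ containing $N$, and $s \colon \Lambda \to G$ realizes $H$ as an internal semidirect product $H = N \cdot s(\Lambda)$ with $N \cap s(\Lambda) = \{e\}$; every $h \in H$ has a unique expression $h = n \cdot s(\lambda)$.

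For the first stage, given $\phi \in \QQQ(N)^G$, I would define $\hat\phi \colon H \to \RR$ by $\hat\phi(n \cdot s(\lambda)) := \phi(n)$. Using the multiplication rule
\[
(n_1 s(\lambda_1))(n_2 s(\lambda_2)) = \bigl(n_1 \cdot s(\lambda_1)\, n_2\, s(\lambda_1)^{-1}\bigr) \cdot s(\lambda_1 \lambda_2),
\]
the $G$-invariance identity $\phi\bigl(s(\lambda_1)\, n_2\, s(\lambda_1)^{-1}\bigr) = \phi(n_2)$, and the defect of $\phi$, one checks that $\hat\phi$ is a quasimorphism on $H$ with $D(\hat\phi) \leq D(\phi)$. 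Its homogenization $\hat\phi^{\mathrm{h}} \in \QQQ(H)$ still restricts to $\phi$ on $N$, since $\phi$ was already homogeneous.

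For the second stage, I want to invoke the finite-index extension result recalled earlier (\cite[Lemma~4.2]{Mal}), whose hypothesis is $G$-invariance on $H$. By Lemma~\ref{basic lemma}, $\hat\phi^{\mathrm{h}}$ is $H$-invariant, but it is typically \emph{not} $G$-invariant — this is the main technical obstacle, since conjugation by $g \in G \setminus H$ need not preserve the factor $s(\Lambda)$. To remedy it, I would average over $G/H$: fix coset representatives $g_1, \ldots, g_k$ and set
\[
\overline\psi(h) := \frac{1}{k} \sum_{i=1}^{k} \hat\phi^{\mathrm{h}}(g_i h g_i^{-1}).
\]
Normality of $H$ keeps each $g_i h g_i^{-1}$ inside $H$; a standard permutation-of-cosets argument combined with the $H$-invariance of $\hat\phi^{\mathrm{h}}$ shows $\overline\psi \in \QQQ(H)^G$; and the $G$-invariance of $\phi$ gives $\overline\psi|_N = \phi$. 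Applying the finite-index extension lemma to $\overline\psi$ then produces the desired homogeneous quasimorphism on $G$ extending $\phi$.
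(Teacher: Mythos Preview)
Your argument is correct. The reduction to a normal $\Lambda$ via the core, the semidirect-product extension $\hat\phi(n\cdot s(\lambda))=\phi(n)$ with defect bound $D(\hat\phi)\le D(\phi)$, the averaging over $G/H$ to force $G$-invariance (using $H$-invariance of $\hat\phi^{\mathrm{h}}$ together with normality of $H$), and the final appeal to the finite-index extension lemma all go through as you describe.

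The survey does not reproduce a proof here; it cites \cite[Proposition~1.6]{KKMM1} and simply observes that the statement subsumes the two previously known special cases (finite-index $N$, and the split case with a genuine section $\Gamma\to G$). Your two-stage strategy---first use the section to pass from $N$ to $H=p^{-1}(\Lambda)$, then use finite index to pass from $H$ to $G$---is exactly the combination of those two ingredients that the paper's discussion points to, so your approach matches the intended one.
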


This result includes the following conditions that appear above:
\begin{itemize}
  \item $N$ is a finite index normal subgroup of $G$
  \item there exists a section homomorphism $\Gamma \to G$
\end{itemize}

The work \cite{KKMM1} also contains results on the comparison problems between $\scl_G$ and $\scl_{G,N}$ (Theorem \ref{survey of braid} (1)) and between $\cl_G$ and $\cl_{G,N}$ (Section \ref{cl comparison sec}).

The authors \cite{K2M3} provided many results on invariant quasimorphisms and stable mixed commutator length.
In this survey, we will exhibit some parts of this work in Sections \ref{sp sec}, \ref{scl sec} and \ref{sec:ext_prob}.

As a notable extrinsic application of the extension problem of invariant quasimorphisms, we state the following theorem in \cite{KKMM2} by some of the authors.
\begin{thm}[{\cite[Theorem~1.1]{KKMM2}}] \label{thm flux}
Let $\Sigma_{\genus}$ be a closed orientable surface whose genus $l$ is at least two and $\omega$ an area \textup{(}symplectic\textup{)} form on $\Sigma_{\genus}$.
Assume that a pair $f,g \in  \Symp_0(\Sigma_{\genus},\omega)$ satisfies $fg = gf$. Then
\[ \flux_\omega(f) \smile \flux_\omega(g) = 0\]
holds true. Here, $\flux_{\omega}\colon \Symp_0(\Sigma_{\genus},\omega)\to \HHH^1(\Sigma_{\genus};\RR)$ is the flux homomorphism \textup{(}recall Example $\ref{flux on higher genus}$\textup{)}, and $\smile \colon \HHH^1(\Sigma_{\genus};\RR) \times \HHH^1(\Sigma_{\genus};\RR) \to \HHH^2(\Sigma_{\genus};\RR) \cong \RR$ denotes the cup product.
\end{thm}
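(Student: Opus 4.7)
Set $N := \Ham(\Sigma_\genus, \omega)$ and $G := \Symp_0(\Sigma_\genus, \omega)$. Since $\genus \ge 2$, the flux group is trivial (Example~\ref{flux on higher genus}), so $\flux_\omega$ induces an isomorphism $G/N \cong \Gamma := \HHH^1(\Sigma_\genus; \RR)$. Let $\phi_P \in \QQQ(N)^{\Symp(\Sigma_\genus, \omega)}$ be Py's Calabi quasimorphism; by Theorem~\ref{thm:py} it is non-extendable to $G$. My strategy is to extract from $\phi_P$ an antisymmetric bilinear pairing on $\Gamma$, identify it up to a non-zero constant with the cup-product pairing via representation-theoretic rigidity, and then observe that this pairing vanishes on $(\flux_\omega(f), \flux_\omega(g))$ whenever $fg = gf$.

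Since $N = [G, G]$, every commutator $[x, y]$ of elements of $G$ lies in $N$, so I set
\[
\Psi(x, y) := \phi_P([x, y]), \qquad x, y \in G.
\]
The standard identities $[xx', y] = x[x', y]x^{-1}\cdot [x, y]$ and $[x, yy'] = [x, y] \cdot y[x, y']y^{-1}$, combined with the quasimorphism inequality for $\phi_P$ and its $G$-invariance (which eliminates the conjugations), yield
\[
|\Psi(xx', y) - \Psi(x, y) - \Psi(x', y)| \le D(\phi_P), \quad |\Psi(x, yy') - \Psi(x, y) - \Psi(x, y')| \le D(\phi_P),
\]
together with $|\Psi(n, y)| \le D(\phi_P)$ for $n \in N$ (this uses $G$-invariance again, via $\phi_P(yn^{-1}y^{-1}) = -\phi_P(n)$). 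A Fekete-type homogenization for near-additive sequences now shows that
\[
\bar{\Psi}(\alpha, \beta) := \lim_{k, m \to \infty} \frac{1}{km}\,\Psi(\tilde{\alpha}^k, \tilde{\beta}^m)
\]
exists and is independent of the choice of lifts $\tilde{\alpha}, \tilde{\beta} \in G$ of $\alpha, \beta \in \Gamma$, defining a genuine antisymmetric bilinear form $\bar{\Psi} \colon \Gamma \times \Gamma \to \RR$.

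Because $\phi_P$ is $\Symp(\Sigma_\genus, \omega)$-invariant, $\bar{\Psi}$ is invariant under the induced action of $\Symp(\Sigma_\genus, \omega)$ on $\Gamma$. This action factors through the classical symplectic representation $\Mod(\Sigma_\genus) \twoheadrightarrow \Sp(2\genus, \ZZ)$. Since $\Sp(2\genus, \ZZ)$ is Zariski-dense in $\Sp(2\genus, \RR)$ (Borel density) and the space of $\Sp(2\genus, \RR)$-invariant antisymmetric bilinear forms on $\RR^{2\genus}$ is one-dimensional, spanned by the intersection/cup-product form, one obtains
\[
\bar{\Psi}(\alpha, \beta) = c \cdot \langle \alpha \smile \beta, [\Sigma_\genus] \rangle
\]
for some $c \in \RR$. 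I next argue $c \ne 0$. If $c = 0$, then $\Psi$ is bounded on $G \times G$; picking a set-theoretic section $s \colon \Gamma \to G$ of $\flux_\omega$ with $s(0) = e$ and defining $\hat{\phi}(x) := \phi_P(x \cdot s(\flux_\omega(x))^{-1})$, one uses boundedness of $\Psi$ together with $G$-invariance of $\phi_P$ to show that the cocycle $\phi_P \circ c$ associated with $s$, where $c(\alpha, \beta) = s(\alpha)s(\beta)s(\alpha + \beta)^{-1} \in N$, is bounded; this bounds $D(\hat{\phi})$ and produces an extension of $\phi_P$ to $G$, contradicting Theorem~\ref{thm:py}.

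Finally, if $fg = gf$ then $[f^k, g^m] = e$ for all $k, m \ge 1$, so $\Psi(f^k, g^m) = \phi_P(e) = 0$ for every $k, m$, and passing to the limit gives $\bar{\Psi}(\flux_\omega(f), \flux_\omega(g)) = 0$. Combined with $\bar{\Psi} = c \cdot \smile$ and $c \ne 0$, this forces $\flux_\omega(f) \smile \flux_\omega(g) = 0$, as required. The main obstacle in this plan lies in the step $c \ne 0$: translating the boundedness of the commutator pairing $\Psi$ into an honest extension of $\phi_P$ requires careful bookkeeping of the 2-cocycle $c(\alpha, \beta)$ along the section $s$ and its interaction with the $G$-invariance of $\phi_P$, and this is where the bulk of the technical work takes place; the remaining ingredients (the quasi-bilinear estimates of Step~1 and the representation-theoretic rigidity of Step~2) are relatively standard.
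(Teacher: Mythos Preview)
Your approach differs substantially from the paper's, which is short and indirect: assuming $\bar v\smile\bar w\ne 0$ for $\bar v=\flux_\omega(f)$, $\bar w=\flux_\omega(g)$, the paper invokes Theorem~\ref{thm A} to obtain non-extendability of $\qm_P$ to $G_{\Lambda_k}=\flux_\omega^{-1}(\langle\bar v,\bar w/k\rangle)$ for large $k$; but $fg=gf$ lets one define a homomorphism $\Lambda_1=\langle\bar v,\bar w\rangle\to G_{\Lambda_k}$ by $\bar v\mapsto f$, $\bar w\mapsto g$, which is a virtual section of $G_{\Lambda_k}\to\Lambda_k\cong\ZZ^2$, and Theorem~\ref{virtually split} then forces extendability --- a contradiction. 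All the analytic work is packaged into Theorem~\ref{thm A}, and the whole argument lives over a $\ZZ^2$-quotient.

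Your route --- extract a pairing on $\Gamma=\HHH^1(\Sigma_\genus;\RR)$ from $\qm_P$, identify it with the cup product via $\Sp(2\genus,\ZZ)$-rigidity, and evaluate on commuting elements --- is attractive and uses only the weaker Theorem~\ref{thm:py}, but there is a genuine gap in the rigidity step. Your $\bar\Psi$ is only \emph{bi-additive} (equivalently $\QQ$-bilinear); nothing you wrote makes it $\RR$-bilinear. Borel density controls algebraic, hence $\RR$-bilinear, invariants --- not merely additive ones. Indeed, for any $\QQ$-linear but non-$\RR$-linear functional $\lambda\colon\RR\to\RR$, the form $(\alpha,\beta)\mapsto\lambda(\alpha\smile\beta)$ is $\Sp(2\genus,\ZZ)$-invariant, alternating and bi-additive, yet is not a real multiple of $\smile$. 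So from ``$\bar\Psi\ne 0$ and $\bar\Psi(\flux_\omega f,\flux_\omega g)=0$'' alone you cannot conclude $\flux_\omega f\smile\flux_\omega g=0$. The gap is plausibly fixable by showing that $\bar\Psi$ is locally bounded (e.g.\ via continuity of $\qm_P$ together with a continuous local section of $\flux_\omega$), since a locally bounded additive map $\RR^{2\genus}\to\RR$ is automatically $\RR$-linear; but that is nontrivial extra input you have not supplied. There is also a smaller inaccuracy in the $c\ne 0$ step: boundedness of $\Psi$ does not make $\qm_P\circ c$ \emph{bounded} for an arbitrary set-theoretic section $s$. What one can actually argue is that the antisymmetrization of $\qm_P\circ c$ is (up to bounded error) $\bar\Psi$, so if $\bar\Psi=0$ the class of $\qm_P\circ c$ in $\HHH^2_{/b}(\Gamma)\cong\HHH^2(\Gamma)$ vanishes; this suffices to extend $\qm_P$ after correcting $\hat\phi$ by a function pulled back from $\Gamma$, but it is not the argument you sketched.
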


The key to the proof of Theorem \ref{thm flux} is the following result, which improves Theorem \ref{thm:py}.

\begin{thm}[Non-extendability of Py's Calabi quasimorphism] \label{thm A}
Let $\Sigma_{\genus}$ be a closed orientable surface whose genus $\genus$ is at least two and $\omega$ a symplectic form on $\Sigma_{\genus}$.
Let $\bar{v}$, $\bar{w} \in \HHH^1(\Sigma_{\genus};\RR)$ with $\bar{v} \smile \bar{w}  \neq  0$. For a positive integer $k$, set $\Lambda_k = \langle \bar{v}, \bar{w} / k \rangle$ and $G_{\Lambda_k} = \flux_\omega^{-1}(\Lambda_k)$.
Then, there exists a positive integer $k_0$ such that Py's Calabi quasimorphism $\qm_P$ is {\rm non}-extendable to $G_{\Lambda_k}$ for every $k \ge k_0$.
\end{thm}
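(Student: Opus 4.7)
I would argue by contradiction. Suppose $\hat\qm\colon G_{\Lambda_k}\to\RR$ is a homogeneous quasimorphism extending $\qm_P$. The aim is to produce a sequence of elements $x_n\in\Ham(\Sigma_\genus,\omega)$ such that each $x_n$ is a single commutator in $G_{\Lambda_k}$ but $|\qm_P(x_n)|\to\infty$. Since any homogeneous quasimorphism satisfies $|\hat\qm([a,b])|\leq D(\hat\qm)$ for every $a,b\in G_{\Lambda_k}$ (by homogeneity, together with the conjugation-invariance of Lemma \ref{basic lemma}), and since $\hat\qm|_{\Ham(\Sigma_\genus,\omega)}=\qm_P$, the chain $|\qm_P(x_n)|=|\hat\qm(x_n)|\leq D(\hat\qm)$ would give the desired contradiction.

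To construct the $x_n$, I first exploit the fact that for $\genus\geq 2$ the flux group $\Gamma_\omega$ is trivial (Example \ref{flux on higher genus}), so that $\flux_\omega$ surjects onto $\HHH^1(\Sigma_\genus;\RR)$. I would choose model generators $f,g\in G_{\Lambda_k}$ with $\flux_\omega(f)=\bar v$ and $\flux_\omega(g)=\bar w/k$, supported in tubular neighborhoods of simple closed curves $\alpha,\beta\subset\Sigma_\genus$ Poincar\'e dual to $\bar v$ and $\bar w$ respectively, with $\alpha$ and $\beta$ meeting transversally so as to realize the nonzero cup product $\bar v\smile\bar w$ as intersection data. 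Because $\Lambda_k$ is abelian, $[G_{\Lambda_k},G_{\Lambda_k}]\subset\Ham(\Sigma_\genus,\omega)$, so for every $n\in\ZZ$ and every $h\in\Ham(\Sigma_\genus,\omega)$ the element $[f,g^{n}h]$ lies in $\Ham(\Sigma_\genus,\omega)$ and is a single commutator in $G_{\Lambda_k}$.

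The technical heart of the proof is then to choose Hamiltonian perturbations $h_n\in\Ham(\Sigma_\genus,\omega)$ and set $x_n:=[f,g^{n}h_n]$ so that Py's explicit construction of $\qm_P$, and in particular its Calabi property, yields a growth estimate
\[
|\qm_P(x_n)|\longrightarrow\infty\quad\text{as } n\to\infty,
\]
driven by the non-vanishing of $\bar v\smile\bar w$ through an intersection-type integral along $\alpha\cap\beta$. The threshold $k_0$ enters at this step: the choice of model support for $g$ and the accompanying Calabi-type computation require $\bar w/k$ to be small enough for $g$ to be realizable as a shear of amplitude $1/k$ in an annular neighborhood of $\beta$, which forces $k\geq k_0$ for some $k_0$ depending on $\Sigma_\genus$, $\omega$, and $\bar v,\bar w$.

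The main obstacle, and the most delicate part of the argument, is this explicit Calabi-type estimate of $\qm_P(x_n)$: one must unpack Py's construction sufficiently to extract the cup-product contribution and to ensure that it is genuinely unbounded in $n$, rather than merely nonzero. Here the specific geometric nature of $\qm_P$---and not only its existence as a $\Symp(\Sigma_\genus,\omega)$-invariant quasimorphism---is essential, which is why a genuine strengthening of the approach used to prove Theorem \ref{thm:py} is needed.
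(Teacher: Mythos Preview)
The paper is a survey and does not give its own proof of Theorem~\ref{thm A}; the result is stated and attributed (implicitly to \cite{KKMM2}), and then used to deduce Theorem~\ref{thm flux}. What the paper \emph{does} record is the general mechanism behind such non-extendability results, namely Proposition~\ref{noneq_criterion}: exhibit elements $x_i\in[G,N]$ with $\cl_G(x_i)$ uniformly bounded but $\qm(x_i)\to\infty$. The paper explicitly says this is the route used in \cite{KK} to prove the weaker Theorem~\ref{thm:py}, with $(G,N)=(\Symp_0(\Sigma_\genus,\omega),\Ham(\Sigma_\genus,\omega))$.

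Your plan is exactly this mechanism specialized to $(G_{\Lambda_k},\Ham(\Sigma_\genus,\omega))$, and the structural steps are correct: the bound $|\hat\qm([a,b])|\le D(\hat\qm)$ for a homogeneous quasimorphism is valid, the commutators $[f,g^n h_n]$ do lie in $\Ham(\Sigma_\genus,\omega)$ because $\Lambda_k$ is abelian, and your reading of where $k_0$ enters (making the shear $g$ with flux $\bar w/k$ realizable in a fixed annular neighborhood of $\beta$) is the right intuition. You are also honest that the hard part---extracting an unbounded contribution to $\qm_P(x_n)$ from the intersection data via the Calabi property of $\qm_P$---is not carried out; that is indeed where the substance of the argument in the original source lies, and it genuinely requires unpacking Py's construction rather than only the abstract invariance of $\qm_P$. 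So your proposal matches the approach the paper points to, correctly isolates the key estimate, but is (as you present it) a plan rather than a proof.
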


We remark that $\Lambda_k \cong \ZZ^2$,
and hence we have the following short exact sequence
\[ 1 \to \Ham(\Sigma_\genus, \omega) \to G_{\Lambda_k} \to \Lambda_k \cong \ZZ^2 \to 1, \]
which does not virtually split.
Theorem \ref{thm flux} follows from Theorems \ref{thm A} and \ref{virtually split} as follows.
Set $\bar{v}=\flux_\omega(f)$ and $\bar{w}=\flux_\omega(g)$ and assume that $\bar{v} \smile \bar{w} \neq 0$.
By Theorem \ref{thm A}, Py's Calabi quasimorphism is non-extendable to $G_{\Lambda_k}$ for sufficiently large $k$. On the other hand, by the assumption $fg = gf$, we can define a homomorphism $s \colon \Lambda_1=\langle \bar{v}, \bar{w} \rangle \to G_{\Lambda_k}$ by setting $s(\bar{v})=f$ and $s(\bar{w})=g$. Since $\Lambda_1$ is a subgroup of finite index in $\Lambda_k$ and by the definition of $s$, the map $s$ is a virtual section of $\flux_\omega \colon G_{\Lambda_k} \to \Lambda_k$. By Theorem \ref{virtually split}, every $G_{\Lambda_k}$-invariant quasimorphism on $\Ham(M,\omega)$ extends, but this is a contradiction. Therefore we can conclude that $\bar{v} \smile \bar{w} = 0$.

At the end of this section, we note that invariant quasimorphisms are abundant in certain cases.

\begin{remark} \label{rmk:acyl_hyp}
Let $G$ be an acylindrically hyperbolic group and $N$ an infinite normal subgroup of $G$.
Then, the space $i^{\ast}\QQQ(G)$ (and thus $\QQQ(N)^G$) is infinite-dimensional (\cite{BF02}, \cite[Corollary 4.3]{FW}).
We note that the following groups are acylindrically hyperbolic: non-elementary Gromov-hyperbolic groups, mapping class groups of $\Sigma_{\genus}$ with $\genus\geq 2$, groups defined by $s$ generators and $r$ relations with $s-r\geq 2$ \cite{Osin15}, and $\Aut(H)$ with $H$ non-elementary Gromov-hyperbolic \cite{GenevoisHorbez}.
Moreover, we can show that the space $\QQQ(N)^G/\HHH^1(N)^G$ is infinite-dimensional as follows:
By the result of Osin \cite{Osin}, $N$ is also acylindrically hyperbolic. In particular, $[N,N]$ is an infinite normal subgroup of $G$.
Thus, even if we restrict homogeneous quasimorphisms on $G$ to $N$ or even to $[N,N]$, the space consisting of them is infinite-dimensional.
Hence, the space $\QQQ(N)^G / \HHH^1(N)^G$ is still infinite-dimensional since every homomorphisms on $N$ vanishes on $[N,N]$.

\end{remark}


\section{Explicit construction of invariant quasimorphisms}\label{exp_const}

In this section, we describe the constructions of examples of invariant (partial) quasimorphisms.
In Sections \ref{sp sec} and \ref{sec:ext_prob}, we provide a method to prove the existence of a non-extendable quasimorphism, using homological algebra.
However, this method does not provide a concrete construction of a non-extendable quasimorphism and it is still difficult to construct a non-extendable quasimorphism for a wide class of pairs of groups.

\subsection{Py's Calabi quasimorphism}\label{contrcution of Py}
In Theorem~\ref{thm:py}, we state that Py's Calabi quasimorphism $\qm_P$ on $\Ham(\Sigma_l, \omega)$ is non-extendable to $\Symp_0(\Sigma_l, \omega)$. In this subsection, we briefly describe the construction of Py's Calabi quasimorphism \cite{Py06}.

We first recall some terminology. Here we introduce another definition of Hamiltonian vector field. Let $(M,\omega)$ be a symplectic manifold. For a smooth function $H\colon M\to\mathbb{R}$, we define the \textit{Hamiltonian vector field} $X_H$ associated with $H$ by
\[\omega(X_H,V)=-dH(V)\text{ for every }V \in \mathcal{X}(M),\]
where $\mathcal{X}(M)$ is the set of smooth vector fields on $M$.

For a (time-dependent) smooth function $H\colon  [0,1] \times M\to\mathbb{R}$ with compact support and for $t \in  [0,1] $, we define a function $H_t \colon M \to \mathbb{R}$ by $H_t(x)=H(t,x)$.
Let $X_H^t$ denote the Hamiltonian vector field associated with $H_t$ and let $\{ \varphi_H^t \}_{t\in\mathbb{R}}$ denote the isotopy generated by $X_H^t$ such that $\varphi^0=\mathrm{id}$.
We set $\varphi_H=\varphi_H^1$ and $\varphi_H$ is called the \emph{Hamiltonian diffeomorphism generated by $H$}.
For a connected symplectic manifold $(M,\omega)$, we provide another definition of the group $\Ham(M,\omega)$ of Hamiltonian diffeomorphisms by
\[\Ham(M,\omega)=\{\varphi\in\mathrm{Diff}(M)\;|\;\exists H\in C^\infty(   [0,1]\times M )\text{ such that }\varphi=\varphi_H\}.\]


In this section, we recall the definition of the Calabi property with respect to  quasimorphisms and that of Calabi quasimorphisms in symplectic geometry.
A subset $X$ of a connected symplectic manifold $(M,\omega)$ is said to be \textit{displaceable} if there exists $\varphi \in \Ham(M,\omega)$ satisfying $\varphi(X) \cap \overline{X} = \emptyset$. Here, $\overline{X}$ is the topological closure of $X$  in $M$.

Let $(M,\omega)$ be a $2n$-dimensional  \emph{exact} symplectic manifold, meaning that the symplectic form $\omega$ is exact.
Note that the exactness of $\omega$ implies that the symplectic manifold $(M,\omega)$ is an open manifold.
For such $(M,\omega)$, we recall that the \emph{Calabi homomorphism}
is a function $\mathrm{Cal}_{M} \colon \Ham(M,\omega)\to\mathbb{R}$ defined by
\[
	\mathrm{Cal}_{M}(\varphi_H)=\int_0^1\int_M H_t\omega^n\,dt,
\]
 where $H \colon [0,1] \times M \to \RR$ is a smooth function  with compact support. It is known that the Calabi homomorphism is a well-defined group homomorphism (see \cite{Cala,Ban,Ban97,MS,Hum}).
Py's Calabi quasimorphism has the following property.

\begin{definition}[Calabi property with respect to quasimorphisms] \label{definition of Calabi property}
Let $(M,\omega)$ be a $2n$-dimensional  symplectic manifold.
Let $\mu \colon \Ham(M,\omega) \to \mathbb{R}$ be a homogeneous quasimorphism. A non-empty open subset $U$ of $M$ is said to have the \textit{Calabi property} with respect to $\mu$ if $\omega$ is exact on $U$ and if the restriction of $\mu$ to $\Ham(U,\omega)$ coincides with the Calabi homomorphism $\mathrm{Cal}_U$.
\end{definition}

In terms of subadditive invariants, the Calabi property corresponds to the asymptotically vanishing spectrum condition in \cite[Definition 3.5]{KO19}.

\begin{definition}[\cite{EP03,PR}]\label{Calabi prop}
Let $(M,\omega)$ be a $2n$-dimensional  symplectic manifold.
A \emph{Calabi quasimorphism} is defined as a homogeneous quasimorphism $\mu \colon \Ham(M,\omega) \to \RR$ such that every non-empty displaceable open exact subset of $M$ has the Calabi property with respect to $\mu$.
\end{definition}


Let $\Sigma_l$ be a closed surface whose genus $l$ is at least $2$, and $\omega$ a volume form of $\Sigma_l$ such that the volume is $2l - 2$.
Choose a metric with constant negative curvature whose area form is $\omega$. Let $\pi \colon P \to \Sigma_l$ be the unit tangent bundle of $\Sigma_l$, $\mathbb{D}$ the Poincar\'e disc, and $S^1 \mathbb{D}$ the unit tangent bundle of $\mathbb{D}$. Set $S^1_\infty = \partial \mathbb{D}$. Define $p_\infty \colon S^1 \mathbb{D} \to S^1_\infty$ to be the map sending $v \in S^1 \mathbb{D}$  to the value at infinity  of the geodesic starting at $v$.
Let $X$ be the vector field generated by the $S^1$-action on $P$. There exists a  $S^1$-invariant  contact form on $P$ such that $\pi^* \omega = d \alpha$.
Then $X$ is the Reeb vector field of $\alpha$.
Let ${\rm Diff}(P, \alpha)$ denote the group of diffeomorphisms of $P$ preserving $\alpha$, and ${\rm Diff}(P, \alpha)_0$ its identity component.
Now we construct a homomorphism $\Theta \colon \Ham (\Sigma_l, \omega) \to {\rm Diff}_0(P, \alpha)$ as follows. Let $\varphi \in \Ham(\Sigma_l,\omega)$ and let $H\colon  [0,1] \times \Sigma_l  \to\RR $ be a function  such that $\varphi=\varphi_H$  and satisfying $\int_{\Sigma_l} H_t \omega = 0$ for every $t \in [0,1]$. Then set
\[ V_t = \widehat{X}_{H_t} + (H_t \circ \pi)X.\]
Here $\widehat{X}_{H_t}$ denotes the horizontal lift of  the Hamiltonian vector field $X_{H_t}$, \emph{i.e.}, $\alpha(\widehat{X}_{H_t}) = 0$ and $\pi_* (\widehat{X}_{H_t}) = X_{H_t}$.
Let  $\Theta(H)_t$ be the isotopy generated by $\{V_t\}_{t\in [0,1]}$, and set $\Theta(\varphi) = \Theta(H)_1$.
This is  known to be  well-defined.

Let $\widehat{\Theta(H)_t} \colon S^1 \mathbb{D} \to S^1 \mathbb{D}$ be the lift of $\Theta(H)_t$.
Define $\gamma^{(H,v)} \colon [0,1] \to S^1_\infty$ by $\gamma^{(H,v)}(t) = p_\infty (\widehat{\Theta(H)_t} (v))$.
Let $\widetilde{\gamma^{(H,v)}}$ be a lift to $\RR$ of $\gamma^{(H,v)}$ and set ${\rm Rot}(H,v) = \widetilde{\gamma^{(H,v)}}(1) - \widetilde{\gamma^{(H,v)}}(0)$. Let $\widetilde{\pi}$ denote the projection $S^1 \mathbb{D} \to \mathbb{D}$. For $\widetilde{x} \in \mathbb{D}$, set
\[ \widetilde{\rm angle} (H, \widetilde{x}) = - \inf_{\widetilde{\pi}(v) = \widetilde{x}} \lfloor {\rm Rot}(H,v) \rfloor.\]
Since $\widetilde{\rm angle}(H, -)$ is invariant by the action of $\pi_1(\Sigma_l)$, this induces a measurable bounded function ${\rm angle}(H, -)$ on $\Sigma_l$.
Define
\[ \mu_1(\varphi_H) = \int_{\Sigma_l} {\rm angle}(H, -) \omega.\]
Then $\mu_1$ is a quasimorphism and its homogenization is Py's Calabi quasimorphism $\qm_P$.


\subsection{Entov--Polterovich's partial Calabi quasimorphism}\label{contrcution of EP}

We provide a sketch of the construction of Entov--Polterovich's partial Calabi quasimorphism which appeared in Section \ref{prehistory in symp}.
There are some convention of the Hamiltonian Floer theory and Entov--Polterovich's construction and
our convention is due to \cite{PR}.
For the precise definition of partial quasimorphism, see Definition \ref{qm rt nu}.

Let $(M,\omega)$ be a $2n$-dimensional closed symplectic manifold.
To avoid using the Novikov ring and the quantum homology, we assume that $\pi_2(M)=0$ and consider the coefficient $\ZZ/2\ZZ$ here
(for more general cases, for example see \cite{O15}, \cite{PR}).
First, we start from a rough explanation of the Hamiltonian Floer theory and Oh--Schwarz's spectral invariant.
For an introduction to spectral invariants through a toy model coming from the usual Morse theory, see \cite[Section 10]{PR}.

Let $\mathcal{L}_0M$ denote the space of contractible smooth loops $ S^1 \to M$ in $M$.
For a smooth function $H\colon S^1\times M\to\mathbb{R}$, the action functional $\mathcal{A}_H\colon \mathcal{L}_0M\to\mathbb{R}$ is given by
\[\mathcal{A}_H(z)=\int_0^1H(t,z(t))dt-\int_{D^2}u^\ast\omega,\]
where $u$ is a smoothly embedded disk in $M$ bounding $z$.
Since $d\omega=0$ and $\pi_2(M)=0$, $\mathcal{A}_H(z)$ does not depend on the choice of $u$.
Then we regard $\mathcal{P}(H)$ as the set of critical points of $\mathcal{A}_H$.
For a generic $H$, we can define the Floer chain complex $CF_\ast(H)$, which is generated by $\mathcal{P}(H)$ as a module over $\mathbb{Z}/2\ZZ$.
We formally obtain the boundary map of this complex by counting isolated negative gradient flow lines of $\mathcal{A}_H$ and we define its homology group $HF_\ast(H)$ which is called \textit{the Hamiltonian Floer homology} of $H$.
There exists a natural isomorphism $\Phi\colon \HHH_\ast(M;\ZZ/2\ZZ) \to HF_\ast(H)$.
We call this isomorphism the PSS isomorphism (\cite{PSS}).

Given a generic smooth function $H$ and an element $A=\sum_i a_i\cdot z_i$ of $CF_\ast(H)$, we define the action level $l_H(A)$ of $A$ by
\[l_H(A)=\max\{\mathcal{A}_H([z_i,u_i]);a_i\neq 0\}.\]
Given a generic smooth function $H$ and  a non-zero element $a$ of $\HHH_\ast(M;\ZZ/2\ZZ)$, we define the spectral invariant associated to $H$ and $a$ by
\[c(a,H)=\inf\{l_H(A);[A]=\Phi(a)\}\]
(if we remove the assumption $\pi_2(M)=0$, we should take $a$ as an element of the quantum homology).

The following proposition summarizes the properties of spectral invariants.

\begin{thm}[\cite{O05}, {\cite[(4) of Theorem 10.1]{O09}}]\label{OhSch}
Let $a \in \HHH_\ast(M;\ZZ/2\ZZ)$ be a non-zero element.
The spectral invariant $c(a, \cdot )$ has the following properties.

\begin{description}
\item[(1) \textit{Lipschitz property}]
The map $H{\mapsto}c(a,H)$ is Lipschitz on $C^\infty(S^1\times M)$ with respect to the $C^0$-norm.
\item[(2) \textit{Homotopy invariance}]
Assume that smooth functions $F,G\colon$ $S^1\times M\to\mathbb{R}$ are normalized, \emph{i.e.}, $\int_M F_t(x)\omega^m=0, \int_M G_t(x)\omega^m=0$ for every $t\in S^1$ and satisfy $\varphi^1_F=\varphi^1_G$ and that their Hamiltonian isotopies $\{\varphi^t_F\}$ and $\{\varphi^t_G\}$ are homotopic relative to endpoints. Then $c(a,F)=c(a,G)$.

\item[(3) \textit{Triangle inequality}]$c(a\ast{b},F\sharp G) \leq c(a,F)+c(b,G)$ for every smooth functions $F,G\colon S^1\times M\to\mathbb{R}$, where $\ast$ denotes the quantum product.
Here the smooth function $F{\sharp}G\colon S^1\times M\to\mathbb{R}$ is defined by $(F{\sharp}G)(t,x)=F(t,x)+G(t,(\varphi_F^t)^{-1}(x))$ whose Hamiltonian isotopy is $\{\varphi_F^t\varphi_G^t\}$.

\item[(4) \textit{Symplectic invariance}]
$c(\eta^\ast H; \eta^\ast a) = c(H; a)$ for every Hamiltonian $H$ and every $\eta \in \Symp(M,\omega)$.
Here $\eta^\ast H$ is defined to be $H \circ \eta \colon M\to \RR$.

\end{description}
\end{thm}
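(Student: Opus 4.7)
The plan is to verify each of the four listed properties by working directly from the definitions of the action functional $\mathcal{A}_H$, the Floer complex $CF_\ast(H)$ together with its action filtration $l_H$, and the PSS isomorphism $\Phi$.

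For \textbf{(1) Lipschitz property}, I would first note that for any capped loop $[z,u]$ one has
\[
\mathcal{A}_H([z,u]) - \mathcal{A}_G([z,u]) = \int_0^1 \bigl(H(t,z(t)) - G(t,z(t))\bigr)\,dt,
\]
so the two action functionals differ pointwise by at most $\|H-G\|_{C^0}$. The critical sets differ, but any Floer cycle representing $\Phi(a)$ for $H$ can be transported to one for $G$ via a continuation cylinder, and the standard energy estimate for continuation maps shows that the action filtration shifts by no more than the $C^0$ distance; this yields the Lipschitz bound on $c(a,\cdot)$. For \textbf{(2) homotopy invariance}, the point is that $c(a,H)$ depends a priori on the whole isotopy $\{\varphi_H^t\}$ rather than merely its endpoint; given two normalized Hamiltonians $F,G$ whose isotopies are homotopic relative to endpoints, one builds a two-parameter family of interpolating Hamiltonians, and the resulting continuation map is an isomorphism of filtered complexes with zero net shift. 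For \textbf{(4) symplectic invariance}, the symplectomorphism $\eta$ induces a bijection between critical points of $\mathcal{A}_H$ and those of $\mathcal{A}_{\eta^\ast H}$ via $z \mapsto \eta^{-1}\circ z$, preserves actions, and identifies the Floer chain complexes isometrically with respect to $l_H$; combining this with the naturality of $\Phi$ under the $\eta$-action on homology yields the claim.

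The main obstacle is \textbf{(3) the triangle inequality}, which requires the \emph{pair-of-pants product} at the chain level. The strategy is to count isolated solutions of a Floer-type equation on a thrice-punctured sphere whose three cylindrical ends are asymptotic to periodic orbits of $F$, $G$, and $F\sharp G$, producing a chain-level product
\[
\ast \colon CF_\ast(F) \otimes CF_\ast(G) \longrightarrow CF_\ast(F\sharp G)
\]
satisfying the action estimate $l_{F\sharp G}(A \ast B) \le l_F(A) + l_G(B)$; this estimate comes from a Stokes-type computation on the pair-of-pants surface applied to a closed one-form that extends the Hamiltonian data near each end. One then picks Floer cycles $A,B$ representing $\Phi(a),\Phi(b)$ with action levels arbitrarily close to $c(a,F)$ and $c(b,G)$ respectively, verifies that $[A\ast B]=\Phi(a\ast b)$ on homology, and takes infima to conclude. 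The technical heart of this step is setting up transversality, compactness, and gluing for the moduli spaces of pair-of-pants solutions; under the simplifying assumptions $\pi_2(M)=0$ and $\ZZ/2\ZZ$ coefficients adopted in the survey, this is substantially cleaner than the general case, but it still rests on the full machinery of Hamiltonian Floer theory and is the genuinely nontrivial part of the proof.
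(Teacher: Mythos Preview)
Your outline is a reasonable sketch of the standard Floer-theoretic arguments for these properties, and nothing in it is obviously wrong. However, there is nothing to compare it against: the paper does \emph{not} prove Theorem~\ref{OhSch}. This is a survey article, and the theorem is quoted as a known result, attributed explicitly to Oh's papers \cite{O05} and \cite[(4) of Theorem 10.1]{O09}. The surrounding text merely describes the construction of Oh--Schwarz spectral invariants at a schematic level and then records the properties needed downstream (for defining $\mu_a$ and deducing its $\Symp(M,\omega)$-invariance); no argument is supplied.

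So the appropriate ``proof'' here is simply a citation. If you wish to include a sketch like yours for expository purposes, that is a separate editorial choice, but be aware that the survey deliberately avoids entering the analytic details (transversality, gluing, continuation maps, pair-of-pants products) and refers the reader to \cite{O05}, \cite{O15}, and \cite{PR} for the actual proofs.
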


We defined the Oh--Schwarz spectral invariant for a generic $H$.
However, we define the Oh--Schwarz spectral invariant for a general $H$ by using (1) of Theorem \ref{OhSch}.


For a non-zero element $a$ of $\HHH_\ast(M;\ZZ/2\ZZ)$ and $\tilde\varphi \in \tHam(M,\omega)$,
we can define the Oh--Schwarz invariant $c(a,\tilde\varphi)$ for  $\tilde\varphi \in \tHam(M,\omega)$ by
$c(a,\tilde\varphi)=c(a,H)$,
where $H$ is the normalized Hamiltonian whose isotopy $\{\varphi_H^t\}_{t\in [0,1]}$ represent $\tilde\varphi$ in $\tHam(M,\omega)$.
The value $c(a,\tilde\varphi)$ does not depend on the choice of $H$ by (2) of Theorem \ref{OhSch}.
Then, we define a function $\mu_a\colon \tHam(M,\omega) \to \RR$ by
\[\mu_a(\tilde\varphi) = -\int_M \omega^n \cdot \lim_{k\to \infty}\frac{c(a,\tilde\varphi^k)}{k}.\]
Entov and Polterovich \cite{EP06}, \cite{EP09} proved that $\mu_a$ is a partial quasimorphism for the fundamental class $[M]$. 

\begin{remark}
If we remove the assumption $\pi_2(M)=0$, then $\mu_a$ is a partial quasimorphism for other some $a$'s.
When $(M,\omega)=(\CC P^n,\omega_{FS})$, $\mu_{[M]}$ is a genuine homogeneous quasimorphism.
\end{remark}

To state the symplectic invariance of $\mu_a$, we explain how to formulate the symplectic invariance because $\tHam(M,\omega)$ may not be a subgroup of $\Symp(M,\omega)$.
For a Hamiltonian isotopy $\{\varphi_H^t\}_{t\in [0,1]}$ and $\eta \in \Symp(M,\omega)$, the conjugation of $\{\varphi_H^t\}_{t\in [0,1]}$ by $\eta \in \Symp(M,\omega)$ is defined to be $\{\eta^{-1}\varphi_H^t\eta\}_{t\in [0,1]}$.
This action induces the conjugation action of $\eta \in \Symp(M,\omega)$ on $\tHam(M,\omega)$ and $\mu_a$ is called \textit{$\Symp(M,\omega)$-invariant} if $\mu_a$ is invariant under this conjugation action.
We can define the $\Symp_0(M,\omega)$-invariance similarly.
We note that $\eta^\ast a = a$ for every $\eta \in \Symp_0(M,\omega)$ and $\eta^\ast [M] = [M]$ for every $\eta \in \Symp(M,\omega)$, where $[M]\in \HHH_\ast(M;\ZZ/2\ZZ)$ is the fundamental class.
Thus,  (4) of Theorem \ref{OhSch} implies the following proposition.
\begin{prop} The following hold true:
\begin{enumerate}
\item
For every non-zero element $a$ of $\HHH_\ast(M;\ZZ/2\ZZ)$ with $\eta^\ast a = a$ for every $\eta \in \Symp_0(M,\omega)$,
$\mu_a$ is $\Symp(M,\omega)$-invariant.
In particular, $\mu_{[M]}$ is $\Symp(M,\omega)$-invariant.
\item
For every non-zero element $a$ of $\HHH_\ast(M;\ZZ/2\ZZ)$,
$\mu_a$ is $\Symp_0(M,\omega)$-invariant.
\end{enumerate}
\end{prop}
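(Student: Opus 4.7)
The plan is to reduce both claims to the symplectic invariance property (4) of Theorem \ref{OhSch} by identifying the Hamiltonian that generates the conjugated isotopy. The central computation is the following standard fact: if $\{\varphi_H^t\}_{t\in[0,1]}$ is the Hamiltonian isotopy of a normalized Hamiltonian $H\colon S^1\times M\to\RR$, and $\eta\in\Symp(M,\omega)$, then the conjugated isotopy $\{\eta^{-1}\varphi_H^t\eta\}_{t\in[0,1]}$ is the Hamiltonian isotopy generated by $\eta^{\ast}H=H\circ\eta$. I would verify this by differentiating $t\mapsto \eta^{-1}\varphi_H^t\eta$ and checking the defining identity $\iota_{X_{H\circ\eta}}\omega=-d(H\circ\eta)$ using $\eta^{\ast}\omega=\omega$. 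I would also note that the normalization is preserved, since $\eta$ preserves $\omega^n$.

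With this identification in hand, for any $\tilde{\varphi}\in\tHam(M,\omega)$ represented by $\{\varphi_H^t\}_{t\in[0,1]}$ with $H$ normalized, the conjugate $\eta^{-1}\tilde{\varphi}\eta$ is represented by the normalized Hamiltonian $\eta^{\ast}H$. Hence, by the definition of the spectral invariant on $\tHam(M,\omega)$ and property (2) (homotopy invariance), we have
\[
c(a,\eta^{-1}\tilde{\varphi}\eta)=c(a,\eta^{\ast}H).
\]
Applying property (4) of Theorem \ref{OhSch}, $c(\eta^{\ast}a,\eta^{\ast}H)=c(a,H)=c(a,\tilde{\varphi})$, so under the hypothesis $\eta^{\ast}a=a$ we obtain $c(a,\eta^{-1}\tilde{\varphi}\eta)=c(a,\tilde{\varphi})$. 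Since conjugation by $\eta$ is a group automorphism, $(\eta^{-1}\tilde{\varphi}\eta)^k=\eta^{-1}\tilde{\varphi}^k\eta$ for every positive integer $k$, and therefore
\[
\mu_a(\eta^{-1}\tilde{\varphi}\eta)=-\int_M\omega^n\cdot\lim_{k\to\infty}\frac{c(a,\eta^{-1}\tilde{\varphi}^k\eta)}{k}=-\int_M\omega^n\cdot\lim_{k\to\infty}\frac{c(a,\tilde{\varphi}^k)}{k}=\mu_a(\tilde{\varphi}).
\]

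To conclude (1), it suffices to check that $\eta^{\ast}a=a$ for all $\eta\in\Symp(M,\omega)$ in each of the cases of interest. For the fundamental class $a=[M]$ this is immediate because any symplectomorphism is orientation-preserving, giving the "in particular" clause. For (2), every $\eta\in\Symp_0(M,\omega)$ is isotopic to the identity, so the induced map on $\HHH_{\ast}(M;\ZZ/2\ZZ)$ is the identity, whence $\eta^{\ast}a=a$ automatically for any homology class $a$, and the argument above applies verbatim.

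The argument is essentially a bookkeeping reduction to Theorem \ref{OhSch}(4); I do not expect a serious obstacle. The only point that requires care is the first step, namely verifying that $H\circ\eta$ is the correct (normalized) generator of the conjugated isotopy, together with checking that the limit and the pairing with $\omega^n$ preserve the invariance — both of which are routine.
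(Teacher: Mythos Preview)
Your argument is correct and is exactly the approach the paper indicates: the paper does not write out a separate proof but simply notes, just before the proposition, that $\eta^{\ast}a=a$ for $\eta\in\Symp_0(M,\omega)$ and $\eta^{\ast}[M]=[M]$ for $\eta\in\Symp(M,\omega)$, and then says ``Thus, (4) of Theorem~\ref{OhSch} implies the following proposition.'' You have supplied the routine details of that implication (identifying $\eta^{\ast}H$ as the normalized generator of the conjugated isotopy and passing to the homogenization), which is precisely what the paper leaves implicit.
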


In the proof of Example \ref{EPeg}, we can find a good $a$ such that $\eta^\ast a = a$ for every $\eta \in \Symp_0(M,\omega)$.
In the proof of Example \ref{unioneg}, we use the fundamental class
\footnote{We note that in Examples \ref{EPeg} and \ref{unioneg}, the assumption $\pi_2(M)=0$ is not satisfied and $a$ should be an element of the quantum homology.}
.

Finally, we propose the following problem.
\begin{openproblem}
Is a partial quasimorphism used in the proof of Example $\ref{EPeg}$ extendable to $\widetilde{\Symp}_0(M,\omega)$?
\end{openproblem}






\subsection{Invariant quasimorphisms for groups acting on the circle} \label{subsec:rotation_invq}

The following construction of invariant quasimorphisms from group actions on the circle was given in \cite{MMM}.

To state the construction, we briefly review the definition of group cohomology (see \cite{brown82} for details).
Let $C^n(G)$ be the space of (inhomogeneous) $n$-cochains $G$, \emph{i.e.}, the space of real-valued functions on the $n$-fold direct product $G^n$ of $G$.
Here the space of $0$-cochains is understood as $C^0(G) = \RR$.
Define the coboundary map $\delta^n \colon C^n(G) \to C^{n+1}(G)$ by
\begin{multline*}
  \delta^n(c)(g_1, \cdots, g_{n+1}) = c(g_2, \cdots, g_{n+1})  \\ 
  + \sum_{i=1}^{n} (-1)^i c(g_1, \cdots, g_i g_{i+1}, g_{n+1})+ (-1)^{n+1} c(g_1, \cdots, g_n).  
\end{multline*}
for $n>0$ and $\delta^0 \colon C^0(G) \to C^1(G)$ by the zero-map.
The cohomology group of the cochain complex $(C^n(G), \delta^n)$ is the ordinary cohomology group $\HHH^n(G)$.

Let $\h$ be the group of orientation preserving homeomorphisms of the circle and $\th$ its universal covering group.
We regard the group $\th$ as
\[
  \{ f \in \Homeo_+(\RR) \colon f(x + 1) = f(x) + 1 \text{ for every } x \in \RR \}.
\]
The \emph{Poincar\'{e} translation number} (\cite{poincare81}) is the map $\trot \colon \th \to \RR$ defined by
\[
  \trot(f) = \lim_{n \to \infty} \frac{f^n(0)}{n}.
\]
This is well-defined and turns out to be a homogeneous quasimorphism on $\th$ (see \cite{MR1876932}).
Let $\chi_b$ be the group two-cocycle on $\h$ defined by
\[
  \chi_b(f,g) = \trot(\widetilde{f} \widetilde{g}) - \trot(\widetilde{f}) - \trot(\widetilde{g})
\]
for every $f, g \in \h$ and their lifts $\widetilde{f}, \widetilde{g} \in \th$.
This cocycle was introduced in \cite{MR848896} and is called the \emph{canonical Euler cocycle}, which represents the \emph{real Euler class $e_{\RR}$ of $\HHH^2(\h)$}.

Let $1 \to N \xrightarrow{i} G \xrightarrow{p} \Gamma \to 1$ be an exact sequence and $\rho \colon G \to \h$ a homomorphism.
Assume that the pullback $\rho^*e_{\RR} \in \HHH^2(G)$ is contained in the image of $p^* \colon \HHH^2(\Gamma) \to \HHH^2(G)$.
By this assumption, there exist a normalized cocycle $A \in C^2(\Gamma)$ and a cochain $u \in C^2(G)$ such that
\[
  \rho^* \chi_b - p^* A = \delta u.
\]
Here a cocycle $A \in C^2(\Gamma)$ is said to be \emph{normalized} if $A(\gamma_1, \gamma_2) = 0$ whenever at least one of $\gamma_1$ and $\gamma_2$ is the group unit of $\Gamma$.
We set $\nu_{\rho, A, u} = u|_{N}$.
Then the following holds.
\begin{thm}[{\cite{MMM}}]\label{thm:rotation_nonext}
  The map $\nu_{\rho, A, u} \colon N \to \RR$ is a $G$-invariant homogeneous quasimorphism on $N$.
  Moreover, if $\HHH_b^2(\Gamma) = 0$ and the pullback $\rho^*e_{\RR}$ is non-zero, then $\nu_{\rho, A, u}$ is a non-zero element in $\WWW(G, N)$.
\end{thm}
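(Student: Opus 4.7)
The plan is to treat the two assertions separately. For the quasimorphism property, note that when $x, y \in N$ we have $p(x) = p(y) = 1_{\Gamma}$, so the normalization of $A$ together with the defining relation $\delta u = \rho^{\ast}\chi_b - p^{\ast}A$ gives $\delta(u|_{N})(x, y) = \chi_b(\rho(x), \rho(y))$, which is bounded because $\chi_b$ is. Hence $u|_{N}$ is a quasimorphism on $N$; I take $\nu_{\rho, A, u}$ to be its homogenization. For the $G$-invariance, iterating the identity $u(ab) = u(a) + u(b) - (\delta u)(a, b)$ yields, for $g \in G$ and $x \in N$,
\[
u(gxg^{-1}) - u(x) = u(1) + (\delta u)(g, g^{-1}) - (\delta u)(gx, g^{-1}) - (\delta u)(g, x).
\]
Substituting $\delta u = \rho^{\ast}\chi_b - p^{\ast}A$, the $A$-contributions in $(\delta u)(g, g^{-1})$ and $(\delta u)(gx, g^{-1})$ both equal $A(p(g), p(g)^{-1})$ (since $p(gx) = p(g)$) and so cancel, while the $A$-contribution in $(\delta u)(g, x)$ is $A(p(g), 1) = 0$ by normalization. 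Only bounded $\chi_b$-terms remain, so $|u(gxg^{-1}) - u(x)|$ is bounded uniformly in $x$. Replacing $x$ by $x^n$, using $(gxg^{-1})^n = gx^ng^{-1}$, dividing by $n$, and letting $n \to \infty$ gives $\nu_{\rho, A, u}(gxg^{-1}) = \nu_{\rho, A, u}(x)$.

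For the non-triviality in $\WWW(G, N)$, suppose for contradiction that $\nu_{\rho, A, u} = \alpha + i^{\ast}\phi$ with $\alpha \in \HHH^1(N)^G$ and $\phi \in \QQQ(G)$. Since $\nu_{\rho, A, u}$ and $u|_{N}$ differ by a bounded function, so do $u|_{N}$ and $\alpha + \phi|_{N}$, and hence $\delta(u|_{N}) - \delta(\phi|_{N})$ is a bounded coboundary on $N$. Combining this with $\delta(u|_{N}) = i^{\ast}\rho^{\ast}\chi_b$ and $\delta(\phi|_{N}) = i^{\ast}\delta\phi$ yields
\[
i^{\ast}(\rho^{\ast}e_{\RR}) = i^{\ast}[\delta\phi]_b \quad \text{in } \HHH_b^2(N)^G.
\]
By the exact sequence
\[
0 \to \HHH_b^2(\Gamma) \to \HHH_b^2(G) \xrightarrow{i^{\ast}} \HHH_b^2(N)^G
\]
and $\HHH_b^2(\Gamma) = 0$, the map $i^{\ast}$ is injective, and therefore $\rho^{\ast}e_{\RR} = [\delta\phi]_b$ in $\HHH_b^2(G)$. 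The image of the natural map $\QQQ(G) \to \HHH_b^2(G)$ coincides with $\ker(\HHH_b^2(G) \to \HHH^2(G))$, so $\rho^{\ast}e_{\RR}$ must vanish in $\HHH^2(G)$, contradicting the hypothesis.

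The most delicate step is the cocycle bookkeeping for the $G$-invariance: one needs the two appearances of $p^{\ast}A$ in the formula for $u(gxg^{-1}) - u(x)$ to cancel exactly, which is precisely where the normalization of $A$ and the vanishing of $p$ on $N$ are essential. Once these are established, the non-triviality in $\WWW(G, N)$ follows formally from the five-term exact sequence in bounded cohomology together with the standard identification of $\mathrm{Im}(\QQQ(G) \to \HHH_b^2(G))$ with $\ker(\HHH_b^2(G) \to \HHH^2(G))$.
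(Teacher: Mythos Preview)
The paper is a survey and does not supply its own proof of this theorem; it simply cites \cite{MMM}. So there is no argument in the paper to compare against. Your proof is essentially correct, with two small points worth cleaning up.

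First, the paper sets $\nu_{\rho,A,u}=u|_N$, not its homogenization, and claims $u|_N$ is already homogeneous. This is true and deserves one line rather than a redefinition: since $\trot$ is homogeneous, for any lift $\tilde f$ of $f$ one has $\chi_b(f^m,f^n)=\trot(\tilde f^{m+n})-m\,\trot(\tilde f)-n\,\trot(\tilde f)=0$; hence $\delta(u|_N)(x^m,x^n)=\chi_b(\rho(x)^m,\rho(x)^n)=0$, and the usual telescoping (together with $u(1)=0$, forced by the same computation with $m=n=0$) gives $u(x^n)=n\,u(x)$ for all $n\in\ZZ$. Taking the homogenization is harmless but unnecessary.

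Second, in the non-triviality part your displayed equation places $\rho^\ast e_{\RR}$ inside $\HHH_b^2$, but $e_{\RR}$ lives in ordinary cohomology. What you actually use is the bounded Euler class $[\chi_b]_b$: the correct statement is $i^\ast[\rho^\ast\chi_b]_b=i^\ast[\delta\phi]_b$ in $\HHH_b^2(N)^G$, and injectivity of $i^\ast$ (from $\HHH_b^2(\Gamma)=0$) gives $[\rho^\ast\chi_b]_b=[\delta\phi]_b$ in $\HHH_b^2(G)$. Applying the comparison map $c_G^2$ sends the left side to $\rho^\ast e_{\RR}$ and the right side to $0$, producing the contradiction. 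With these two adjustments the argument is complete.
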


If $G$ is the fundamental group $\pi_1(\Sigma_l)$ of a closed orientable surface of genus $l \geq 2$ and $N$ is its commutator subgroup $[G,G]$ for instance, then every Fuchsian action $\rho \colon G \to \h$ satisfies the assumption in Theorem \ref{thm:rotation_nonext}.
Hence $\nu_{\rho, A, u}$ gives rise to a non-zero element of $\WWW(G, N)$.
Note that the space $\WWW(G, N)$ is spanned by this invariant quasimorphism $\nu_{\rho, A, u}$ since the dimension of $\WWW(G, N)$ is equal to one (Theorem \ref{raretu of keisan} (2)).


\section{Aut-invariant quasimorphisms}\label{aut section}

A quasimorphism $\phi$ on $G$ is said to be \emph{$\Aut$-invariant} if $\phi(\sigma(g))=\phi(g)$ for every $g \in G$ and every $\sigma \in \mathrm{Aut}(G)$.
In this section, we review results on Aut-invariant quasimorphisms and discuss the relation to invariant quasimorphisms.

 In \cite[Question 47]{Abert}, Ab\'{e}rt asked the question whether free groups $F_n$ ($n\geq 2$) admit a non-trivial Aut-invariant quasimorphism.
According to Hase's result \cite{Hase}, the space of quasimorphisms on $F_n$ that are not Aut-invariant is dense in $\QQQ(F_n)$ (with respect to the topology of pointwise convergence). Nevertheless,
Brandenbursky and Marcinkowski proved the following,
 in particular, giving an answer to Ab\'{e}rt's question for $n=2$ in the affirmative.

  \begin{thm}[\cite{BM}] \label{thm:BM_aut}
The space of $\Aut$-invariant homogeneous quasimorphisms on $F_2$
 is infinite-dimensional.
  \end{thm}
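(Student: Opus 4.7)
The plan is to reinterpret $\Aut$-invariant homogeneous quasimorphisms on $F_2$ as $\Aut(F_2)$-invariant homogeneous quasimorphisms on the normal subgroup $\mathrm{Inn}(F_2) \trianglelefteq \Aut(F_2)$, and then invoke Remark~\ref{rmk:acyl_hyp}.

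First, since $F_2$ is centerless, the inner automorphism map $F_2 \to \Aut(F_2)$, $g \mapsto \mathrm{Inn}(g)$, is injective with normal image $\mathrm{Inn}(F_2) \trianglelefteq \Aut(F_2)$. The identity $\sigma\,\mathrm{Inn}(g)\,\sigma^{-1} = \mathrm{Inn}(\sigma(g))$ holds for every $\sigma \in \Aut(F_2)$ and $g \in F_2$, so a homogeneous quasimorphism $\phi \colon F_2 \to \RR$ satisfies $\phi(\sigma(g)) = \phi(g)$ for every $\sigma \in \Aut(F_2)$ if and only if the corresponding quasimorphism $\widetilde{\phi}$ on $\mathrm{Inn}(F_2)$ satisfies $\widetilde{\phi}(\sigma \tau \sigma^{-1}) = \widetilde{\phi}(\tau)$ for every $\sigma \in \Aut(F_2)$ and $\tau \in \mathrm{Inn}(F_2)$. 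Hence the space of $\Aut$-invariant homogeneous quasimorphisms on $F_2$ is naturally isomorphic to $\QQQ(\mathrm{Inn}(F_2))^{\Aut(F_2)}$.

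Next, I would check the hypotheses of Remark~\ref{rmk:acyl_hyp} for the pair $G = \Aut(F_2)$ and $N = \mathrm{Inn}(F_2)$. Clearly $\mathrm{Inn}(F_2) \cong F_2$ is an infinite normal subgroup. Moreover $F_2$ is a non-elementary Gromov-hyperbolic group, so by the Genevois--Horbez result cited in Remark~\ref{rmk:acyl_hyp}, the group $\Aut(F_2)$ is acylindrically hyperbolic. Remark~\ref{rmk:acyl_hyp} then yields that $\QQQ(\mathrm{Inn}(F_2))^{\Aut(F_2)}$ is infinite-dimensional (and even remains infinite-dimensional modulo $\HHH^1(\mathrm{Inn}(F_2))^{\Aut(F_2)}$), which is the desired conclusion.

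The main obstacle is not in the conceptual reduction above but in the two imported ingredients: the acylindrical hyperbolicity of $\Aut(F_2)$, and the Bestvina--Fujiwara / Fournier-Facio--Wade type theorem producing infinitely many linearly independent $G$-invariant homogeneous quasimorphisms on an infinite normal subgroup of an acylindrically hyperbolic group. The latter is typically achieved by selecting a large collection of pairwise ``independent'' loxodromic elements of $G$ lying in $N$ and building Brooks-type counting quasimorphisms on $G$ whose restrictions to $N$ span an infinite-dimensional subspace of $\QQQ(N)^G$; this is where essentially all of the technical work resides.
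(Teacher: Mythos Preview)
The paper does not give its own proof of this theorem; it is stated as a result of Brandenbursky--Marcinkowski \cite{BM} and cited without argument. Your reduction is correct and is exactly the reinterpretation the paper records in Section~\ref{aut section}: since $F_2$ has trivial center, $\Aut$-invariant homogeneous quasimorphisms on $F_2$ identify with $\QQQ(\mathrm{Inn}(F_2))^{\Aut(F_2)}$, and then Remark~\ref{rmk:acyl_hyp} applies because $\Aut(F_2)$ is acylindrically hyperbolic \cite{GenevoisHorbez} and $\mathrm{Inn}(F_2)\cong F_2$ is an infinite normal subgroup.

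That said, your route and the original \cite{BM} route differ. Brandenbursky and Marcinkowski construct their quasimorphisms explicitly (this is what the paper refers to when it speaks of ``Brandenbursky--Marcinkowski's quasimorphisms'' in Example~\ref{aut_f2}); their argument predates and motivates the general machinery you are invoking. By contrast, your argument leans on Remark~\ref{rmk:acyl_hyp}, whose cited justification \cite[Corollary~4.3]{FW} is a strict generalization of \cite{BM}. So while your proof is logically valid and economical within the framework of this survey, it is historically backwards: you are deducing the special case from a theorem that was written later to subsume it. If the intent is to reprove Theorem~\ref{thm:BM_aut} independently, you would need to unpack the Bestvina--Fujiwara/WPD mechanism you sketch in your last paragraph rather than cite \cite{FW}; if the intent is merely to observe that the result follows from facts assembled elsewhere in the survey, then what you wrote is fine as stated.
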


 Karlhofer proved that the space of Aut-invariant homogeneous quasimorphisms on $G$ is infinite-dimensional if
\begin{itemize}
  \item $G = A \ast B$ is the free product of two non-trivial freely indecomposable
  groups $A$ and $B$, which are not the infinite dihedral group \cite{Karlh},
  \item $G = G_1 \ast \cdots \ast G_k$ is a free product of freely indecomposable groups $G_i$, under the assumption that at most two factors are infinite cyclic and there exists $j \in \{1, \dots, k\}$ such that $G_j \not\cong \ZZ / 2\ZZ$   \cite{Karlh2}.
\end{itemize}

  Recently, Fournier-Facio and Wade proved the following theorem that recovers the above results
   and settles Ab\'{e}rt's question above.
  \begin{thm}[\cite{FW}]\label{thm:autqm}
    Let $G$ be a group, and assume that one of the following holds:
\begin{enumerate}[$(1)$]
    \item $G$ is a non-elementary Gromov-hyperbolic group;
    \item $G$ is hyperbolic relative to a collection of subgroups $\mathcal{P}$, and no group in $\mathcal{P}$ is relatively hyperbolic;
    \item $G$ has infinitely many ends;
    \item $G$ is a graph product of finitely generated abelian groups on a finite graph, and $G$ is not virtually abelian.
\end{enumerate}
Then the space of $\Aut$-invariant homogeneous quasimorphisms on $G$ is infinite-dimensional.
  \end{thm}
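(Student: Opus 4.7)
The unifying observation is that every group in classes (1)--(4) is acylindrically hyperbolic: (1) is classical, (2) is a result of Osin, (3) follows from Stallings' theorem since such a $G$ acts acylindrically on the Bass--Serre tree of a splitting over a finite subgroup, and (4) is the Minasyan--Osin theorem for graph products. Consequently, in every case the Bestvina--Fujiwara / Hull--Osin construction, applied to hyperbolically embedded virtually free subgroups, already furnishes an infinite-dimensional space $\QQQ(G)/\HHH^1(G)$. The task is therefore to promote the construction so that its output is $\Aut(G)$-invariant rather than merely $G$-invariant.

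The plan is to set up, in each case, an $\Aut(G)$-\emph{equivariant} acylindrical action of $G$ on a hyperbolic space, and then to run an $\Aut(G)$-equivariant version of Brooks--Fujiwara counting on it. For the equivariant models, the natural candidates are: for (1) the Cayley graph of $G$ with respect to a finite $\Aut(G)$-invariant generating set, obtained as a union of $\Aut(G)$-orbits of a sufficiently large ball; for (2) the coned-off Cayley graph with respect to the canonical peripheral structure, which is $\Aut(G)$-invariant because (by hypothesis on $\mathcal{P}$) the peripheral structure is unique and therefore characteristic; for (3) the Bass--Serre tree of the canonical Dunwoody--Stallings decomposition, which is characteristic and hence $\Aut(G)$-invariant; and for (4) the extension graph of Kim--Koberda (or the associated Davis/Salvetti complex), on which $\Aut(G)$ acts through its action on the defining graph via the Laurence--Servatius generators.

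The equivariant counting construction itself would proceed as follows. Fix a loxodromic WPD element $w \in G$ on the equivariant model. Instead of counting, in a relative geodesic representative of $g$, the translates of the axis of $w$, one counts the translates of the full $\Aut(G)$-orbit of that axis. Provided the orbit is sparse enough along any given geodesic, the standard Brooks--Fujiwara estimates go through and produce a homogeneous quasimorphism $\phi_w \in \QQQ(G)$ that is Aut-invariant by construction. To reach infinite dimension, one chooses an infinite family $\{w_n\}$ of loxodromic elements whose $\Aut(G)$-orbits of axes are pairwise ``independent'' in the sense of Bestvina--Bromberg--Fujiwara projection complexes; then the corresponding $\phi_{w_n}$ span an infinite-dimensional subspace of $\QQQ(G)^{\Aut(G)}$.

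The main obstacle is precisely controlling the $\Aut(G)$-orbits of axes: if such an orbit accumulates along a single geodesic the counting sum diverges, and if two orbits fail to be projection-separated then the resulting quasimorphisms collapse to proportional ones. For (1) and (2) this sparsity reduces to properness of the induced $\Aut(G)$-action on the (Bowditch) boundary together with the fact that the stabilizer of a loxodromic pair of fixed points is virtually cyclic-extended-by-finite; for (3) it is automatic from the simplicial nature of the $\Aut(G)$-action on the Bass--Serre tree; for (4) it follows from the combinatorial description of $\Aut(G)$ by finitely many elementary automorphisms acting on the defining graph. Verifying these controls in each case, and then exhibiting infinitely many $w_n$ with pairwise independent Aut-orbits of axes, constitutes the technical heart of the proof. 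I expect the delicate point to be case (4), where acylindrical hyperbolicity is most recent and the interaction between the geometry of the extension graph and the $\Aut(G)$-action is least classical.
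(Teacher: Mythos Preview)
This survey does not itself prove the theorem; it is quoted from \cite{FW}. The only information the paper offers about the argument is indirect: immediately after the statement it remarks that in cases (1)--(3) the Fournier-Facio--Wade quasimorphisms ``are constructed by restricting quasimorphisms on $\Aut(G)$'', while extendability in case (4) is left as an open problem. Thus the approach in \cite{FW}, at least for (1)--(3), is to produce an infinite-dimensional space of homogeneous quasimorphisms on $\Aut(G)$ and then restrict to the infinite normal subgroup $\mathrm{Inn}(G)\cong G$, using the mechanism recorded in Remark~\ref{rmk:acyl_hyp} (and \cite[Corollary~4.3]{FW}) to guarantee that the restrictions stay linearly independent. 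Your plan runs in the opposite direction: you try to build $\Aut(G)$-equivariant hyperbolic geometry on $G$ itself and perform an equivariant Brooks--Fujiwara count there.

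There is a genuine gap in your unifying claim. Not every group in class (4) is acylindrically hyperbolic. Take the path on three vertices $a,b,c$ (edges $ac$, $bc$) with each vertex labelled $\ZZ$: the graph product is $F_2\times\ZZ$, which is not virtually abelian but has infinite centre, hence is not acylindrically hyperbolic. The Minasyan--Osin theorem you invoke applies only when the defining graph is not a join; when it is a join the graph product splits as a nontrivial direct product and the acylindrical framework collapses. So case (4) cannot be absorbed into your scheme, and indeed (as the survey signals) \cite{FW} handles it by a different route than (1)--(3).

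Your equivariant models also break already in case (1). You propose the Cayley graph for ``a finite $\Aut(G)$-invariant generating set, obtained as a union of $\Aut(G)$-orbits of a sufficiently large ball''. Such a finite set need not exist: for $G=F_n$ with $n\ge 2$ the group $\mathrm{Out}(F_n)$ is infinite and the $\Aut(F_n)$-orbit of any nontrivial element is infinite (for instance all primitive elements lie in a single orbit), so no finite generating set is $\Aut$-invariant. The same obstruction undermines ``counting the full $\Aut(G)$-orbit of the axis'': that orbit is typically infinite and dense enough that the counting sum has no reason to be a quasimorphism. The route in \cite{FW} sidesteps all of this by moving the hyperbolic geometry from $G$ up to $\Aut(G)$ and then restricting back down.
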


One motivation to study Aut-invariant quasimorphisms comes from the study of \emph{Aut-invariant word norms} (\emph{i.e.}, word norms on Aut-invariant generating sets).
Aut-invariant quasimorphisms provide lower bounds for Aut-invariant word norms: if a group $G$ admits a non-trivial Aut-invariant homogeneous quasimorphism, and $S \subset G$ is the closure of a finite set under the action of $\Aut(G)$ such that $S$ generates $G$, then the Aut-invariant word norm with respect to $S$ on $G$ is unbounded. Aut-invariant word norms have been studied for several natural Aut-invariant generating sets, such as the set of primitive words in free groups and the set of simple loops in surface groups. See \cite{BM,FW} for more information.

  Aut-invariant quasimorphisms are related to invariant quasimorphisms in two ways as follows.
  We have the following two exact sequences for $\mathrm{Aut}(G)$:
  \begin{align}
     1 \to \mathrm{Inn}(G) \to \mathrm{Aut}(G) \to \mathrm{Out}(G) \to 1, \label{eq:out} \\
     1 \to G \to G \rtimes \mathrm{Aut}(G) \to \mathrm{Aut}(G) \to 1. \label{eq:semi}
  \end{align}
  The first way is according to \eqref{eq:out}.
  If $G$ has trivial center (\emph{i.e.}, $Z(G)=1$), we can regard an Aut-invariant quasimorphism as an $\mathrm{Aut}(G)$-invariant quasimorphism since $\mathrm{Inn}(G)\cong G/Z(G)$.
  The second way is according to \eqref{eq:semi}.
  We can regard an Aut-invariant homogeneous quasimorphism as an $(G \rtimes \mathrm{Aut}(G))$-invariant quasimorphism.

Since sequence \eqref{eq:semi} splits, every Aut-invariant quasimorphism on $G$ is extendable to $G \rtimes \mathrm{Aut}(G)$.
In case of $Z(G)=1$, we can consider the extension problem for Aut-invariant quasimorphisms in terms of \eqref{eq:out} and we will discuss it in Section \ref{sec:ext_prob}.

As invariant quasimorphisms are related to mixed commutator length, Aut-invariant quasimorphisms are related to \textit{autocommutator length}, which is the word length with respect to the set of \textit{autocommutators}
$\{ \sigma(g)g^{-1}  \mid g \in G, \sigma \in \Aut(G) \}$.
Autocommutator length is an interesting example of mixed commutator length associated with the specific pair $(G,N)=(G\rtimes \Aut(G), G)$.
See \cite[Section 3]{FW} for more details on the relationship between autocommutator length and mixed commutator lengths.



\section{On the proof of invariant Bavard's duality} \label{prf of bavard}

In this section we provide an outline of the proof of Bavard's duality theorem of invariant quasimorphisms (Theorem~\ref{thm:bavard}) in \cite{KKMM1}.
One of the novel points of our proof from the proof of the original Bavard theorem (Theorem~\ref{original Bavard}) by Bavard is to study the geometric interpretation of mixed commutator length.

\subsection{$\nu_N$-quasimorphism}\label{Bavard N qm}

Here we recall some terminology related to the group homology. Let $C_n(G) = C_n(G; \RR)$ be the space of $n$-chains of $G$ with real coefficients, i.e., the $\RR$-module freely generated by the $n$-fold direct product of $G$. Here we regard $C_0(G)$ as $\RR$. Define the boundary map $\partial_n \colon C_n(G) \to C_{n-1}(G)$ by
\[ \partial_n(g_1, \cdots, g_n) = (g_2, \cdots, g_n) + \sum_{i = 1}^{n-1} (-1)^i (g_1, \cdots, g_i g_{i+1}, \cdots, g_n) + (-1)^n(g_1, \cdots, g_{n-1})\]
for $n \ge 2$ and set $\partial_1 = 0$. Let $Z_n(G)$ denote the kernel of $\partial_n$ and $B_n(G)$ the image of $\partial_{n+1}$.

One of the key observations of the proof of the original Bavard duality theorem \cite{Bav} is to regard the quotient space $\widehat{\QQQ}(G) / \HHH^1(G)$ as the continuous dual of $B_1(G) \cong C_2(G) / Z_2(G)$. Here $\widehat{\QQQ}(G)$ denotes the vector space consisting of (not necessarily homogeneous) quasimorphisms on $G$. The notion which plays a similar role to $\widehat{\QQQ}(G)$ in our setting is the \emph{$\nu_N$-quasimorphism} (see Example \ref{Nqm}).

Before explaining an important property of $\nu_N$-quasimorphism, we introduce the following notion which is a relaxation of invariant quasimorphisms. A quasimorphism $\phi \colon N \to \RR$ is \emph{$G$-quasi-invariant} if there exists $D' \ge 0$ such that for every $g \in G$ and for every $x \in N$,
\[ |\phi(gxg^{-1}) - \phi(x)| \le D'.\]


\begin{lem}[{\cite[Lemma~2.3 and Proposition~2.4]{KKMM1}}]
The following hold:
\begin{enumerate}[$(1)$]
\item If $f \colon G \to \RR$ is an $\nu_N$-quasimorphism, then $f|_N$ is a $G$-quasi-invariant quasimorphism.

\item For every $G$-quasi-invariant quasimorphism $f \colon N \to \RR$, there exists an $\nu_N$-quasimorphism $F \colon G \to \RR$ such that $F|_N = f$.
\end{enumerate}
\end{lem}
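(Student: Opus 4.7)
The plan is to work with the $\nu_N$-quasimorphism notion introduced in Example \ref{Nqm}, whose essential feature is that the defect of $f\colon G \to \RR$ is controlled on pairs $(g_1,g_2)$ in which at least one entry lies in $N$.

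For part (1), I would first restrict: since $N \times N \subset G \times N$, the $\nu_N$-defect bound for $f$ instantly yields a defect bound for $f|_N$, so $f|_N \in \widehat{\QQQ}(N)$. The substantive step is $G$-quasi-invariance. The idea is, for $g \in G$ and $x \in N$, to estimate $f(gx)$ in two different ways using normality: on one hand $|f(gx) - f(g) - f(x)| \le D$ holds because $x \in N$; on the other hand, writing $gx = (gxg^{-1}) \cdot g$ with $gxg^{-1} \in N$ gives $|f(gx) - f(gxg^{-1}) - f(g)| \le D$. Subtracting these yields $|f(gxg^{-1}) - f(x)| \le 2D$, which is exactly $G$-quasi-invariance.

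For part (2), the plan is to extend $f$ by choosing a set-theoretic section. Fix a set-theoretic section $s\colon \Gamma \to G$ of $p$ with $s(1_\Gamma) = 1_G$, so that every $g \in G$ has a unique decomposition $g = y_g \cdot s(p(g))$ with $y_g := g \cdot s(p(g))^{-1} \in N$. Define $F\colon G \to \RR$ by $F(g) := f(y_g)$. Since $y_x = x$ for $x \in N$, the identity $F|_N = f$ is immediate.

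It remains to verify that $F$ is a $\nu_N$-quasimorphism, which reduces to two symmetric defect estimates. For $x \in N$ and $g \in G$, the identity $y_{xg} = x y_g$ gives $|F(xg) - F(x) - F(g)| \le D(f)$ directly. For the other direction, write $gx = y_g \cdot z \cdot s(p(g))$ with $z := s(p(g)) x s(p(g))^{-1} \in N$, so that $y_{gx} = y_g z$; then
\[
  |F(gx) - F(g) - F(x)| \le |f(y_g z) - f(y_g) - f(z)| + |f(z) - f(x)|,
\]
where the first term is bounded by $D(f)$ and the second by the $G$-quasi-invariance constant of $f$ (since $z$ is the $s(p(g))$-conjugate of $x$ within $N$). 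The main obstacle I expect is simply matching conventions: depending on whether the $\nu_N$-defect in \cite{KKMM1} is measured on $G \times N \cup N \times G$, on pairs with product in $N$, or on some closely related variant, the bookkeeping of constants differs, but the structural argument—combining normality of $N$ with the section $s$—should carry over verbatim.
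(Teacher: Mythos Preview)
Your proof is correct. Part (1) is the standard two-sided estimate using $gx=(gxg^{-1})g$, and part (2) is the natural extension via a set-theoretic section $s$ together with the decomposition $g=y_g\cdot s(p(g))$; both defect estimates for $F$ are handled cleanly, with the $gx$-side correctly invoking $G$-quasi-invariance to control $|f(z)-f(x)|$ for the conjugate $z=s(p(g))\,x\,s(p(g))^{-1}$.

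Note that the present survey does not actually supply a proof of this lemma: it merely cites \cite[Lemma~2.3 and Proposition~2.4]{KKMM1}. Your argument is precisely the standard one and matches the approach in that reference, so there is nothing further to compare.
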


We call an $\nu_N$-quasimorphism $f$ an \emph{$\nu_N$-homomorphism} if $D''(f) = 0$. We write $\HHH^1_N(G)$ to mean the vector space consisting of $\nu_N$-homomorphisms on $G$.  Then the defect $D''$ induces a (genuine) norm on $\widehat{\QQQ}_N(G) / \HHH^1_N(G)$.

\subsection{Filling norm}

Let $C_2'(G)$ be the $\RR$-submodule of $C_2(G)$ generated by the set
\[ \{ (g_1, g_2) \in G \times G \; | \; \textrm{$N$ contains either $g_1$ or $g_2$}\}.\]
Set $B'_1 = \partial C'_2$ and $Z'_2 = Z_2(G ; \RR) \cap C_2'$.
Endow $C_2'$ with the $\ell^1$-norm. Then $Z_2'$ is a closed subspace of $C_2'$, and we consider $B'_1 \cong C_2' / Z_2'$ as a quotient normed space. Let $\| \cdot \|'$ denote the norm of $B'_1$.

\begin{prop}[{\cite[Proposition~3.5]{KKMM1}}]
The space $(\widehat{\QQQ}_N(G) / \HHH^1_N(G),D'')$ is isometrically isomorphic to the continuous dual of $(B_1',\| \cdot \|')$.
In particular, $(\widehat{\QQQ}_N(G) / \HHH^1_N(G), D'')$ is a Banach space.
\end{prop}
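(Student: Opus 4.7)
The plan is to adapt the classical Bavard argument by building the duality via the coboundary pairing. Define
\[
\bar\Phi \colon \widehat{\QQQ}_N(G)/\HHH^1_N(G) \longrightarrow (B_1', \|\cdot\|')^{*}, \qquad \bar\Phi([f])(\partial c) := (\delta^1 f)(c), \quad c \in C_2',
\]
where $(\delta^1 f)(g_1,g_2) = f(g_1) + f(g_2) - f(g_1 g_2)$. The adjoint identity $(\delta^1 f)(z) = f(\partial z)$ forces $\delta^1 f$ to vanish on $Z_2' \subset Z_2$, so the value of $\bar\Phi([f])(\partial c)$ depends only on $\partial c$, and descent to the quotient by $\HHH^1_N(G)$ is clean since $f \in \HHH^1_N(G)$ is equivalent to $\delta^1 f$ vanishing on every generator of $C_2'$. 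The upper bound $\|\bar\Phi([f])\| \le D''(f)$ follows from $|(\delta^1 f)(c)| \le D''(f)\,\|c\|_1$ on $C_2'$, since every generator of $C_2'$ has an entry in $N$ and is therefore controlled by the defect $D''$.

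The reverse inequality (and hence injectivity) comes from testing on single generators: because $\|\partial(g_1,g_2)\|' \le \|(g_1,g_2)\|_1 = 1$, one has $|(\delta^1 f)(g_1,g_2)| \le \|\bar\Phi([f])\|$, and taking the supremum over the generators of $C_2'$ yields $D''(f) \le \|\bar\Phi([f])\|$. Hence $\bar\Phi$ is an isometric embedding.

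The main obstacle is surjectivity. Given $\psi \in (B_1')^*$, compose with the quotient $C_2' \twoheadrightarrow B_1'$ to obtain $\tilde\psi \colon (C_2',\|\cdot\|_1) \to \RR$ of operator norm $\|\psi\|$ that annihilates $Z_2'$. The task is then to produce $f \colon G \to \RR$ solving
\[
f(g_1) + f(g_2) - f(g_1 g_2) = \tilde\psi(g_1,g_2)
\]
for every generator $(g_1,g_2) \in C_2'$. Consistency of this system is the heart of the matter: any identical relation $\sum_i a_i\bigl[(g_1^i) + (g_2^i) - (g_1^i g_2^i)\bigr] = 0$ in $C_1(G)$ translates into $\sum_i a_i (g_1^i,g_2^i) \in C_2' \cap \ker \partial = Z_2'$, and hence $\sum_i a_i \tilde\psi(g_1^i,g_2^i) = 0$ by the hypothesis on $\tilde\psi$. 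A solution $f$ therefore exists by Zorn's lemma (infinite-dimensional linear algebra over $\RR$); a more concrete construction can also be carried out by first selecting a primitive $h \colon N \to \RR$ of the bounded $2$-cocycle $\tilde\psi|_{C_2(N)}$ (which represents the zero class in $\HHH^2(N;\RR)$ because it vanishes on $Z_2(N) \subset Z_2'$), then choosing a set-theoretic section $\sigma \colon \Gamma \to G$ with $\sigma(e_\Gamma) = e_G$, setting $f|_{\sigma(\Gamma)} = 0$, and extending by $f(\sigma(\gamma)n) := h(n) - \tilde\psi(\sigma(\gamma),n)$; the remaining generator relations then reduce to $\tilde\psi$ vanishing on the $Z_2'$-boundaries $\partial(\sigma(\gamma),m,n)$ and $\partial(n,\sigma(\gamma),m)$. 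Either way, for the $f$ so produced the equations give $|(\delta^1 f)(g_1,g_2)| = |\tilde\psi(g_1,g_2)|$ on generators, whence $D''(f) = \sup_{(g_1,g_2)} |\tilde\psi(g_1,g_2)| = \|\psi\|$; thus $f \in \widehat{\QQQ}_N(G)$ and $\bar\Phi([f]) = \psi$ by construction. The ``in particular'' assertion is then automatic, because the continuous dual of any normed space is always a Banach space.
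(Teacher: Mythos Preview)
The survey does not reproduce a proof of this proposition; it merely cites \cite[Proposition~3.5]{KKMM1}. Your argument is correct and is precisely the standard Bavard-type duality argument transported to the $C_2'$-setting: pairing $f$ with $\partial c$ via $\delta^1 f$, checking the two norm inequalities on generators, and obtaining surjectivity by extending a linear functional from $B_1' \subset C_1(G)$ to all of $C_1(G)$. This is the expected line of reasoning and matches the approach of the original paper.

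One small remark: your alternative ``concrete construction'' via a set-theoretic section $\sigma$ is sketched rather than verified. The assertion that the remaining relations ``reduce to $\tilde\psi$ vanishing on $\partial(\sigma(\gamma),m,n)$ and $\partial(n,\sigma(\gamma),m)$'' is plausible but not checked; in particular, you must also handle the pairs $(\sigma(\gamma)n, m)$ and $(m, \sigma(\gamma)n)$ with $m \in N$, and the verification involves several cocycle-type identities. Since your first argument (linear extension from $B_1'$) is already complete and clean, this alternative is unnecessary; if you wish to keep it, either carry out the case analysis or drop it.
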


We can show the following properties on $\|\cdot\|'$.
\begin{enumerate}[$(1)$]
\item (\cite[Lemma~3.1]{KKMM1}) If $g \in G$ and $x \in N$, then $[g,x] \in B'_1$ and $\| [g,h]\|' \le 3$. Indeed,
\[ \partial ([g,x], xg) - \partial (g,x) + \partial (x,g) = [g,x].\]

\item (\cite[Lemma~3.2]{KKMM1}) If $x,y \in [G,N]$, then $x,y,xy \in B'_1$ and $\| xy\|' \le \| x\|' + \| y\|' + 1$.
\end{enumerate}
Property (2) implies that for $x \in [G,N]$ the sequence $\{ \| x^n\|' + 1\}_n$ is subadditive, and hence the limit
\[ \mathrm{fill}_{G,N}(x) = \lim_{n \to \infty} \frac{\| x^n\|'}{n}\]
exists. We call $\mathrm{fill}_{G,N}(x)$ the \emph{$(G,N)$-filling norm of $x$}. As is the case of the proof of the original Bavard duality \cite{Ca}, Theorem~\ref{thm:bavard} is deduced from the equality
\[ \mathrm{fill}_{G,N}(x) = 4 \cdot \scl_{G,N}(x).\]
By the same argument as the usual case, it is straightforward to show
\[ \mathrm{fill}_{G,N}(x) \le 4 \cdot \scl_{G,N}(x).\]
The main part of our proof of Theorem~\ref{thm:bavard} is to prove the other inequality. This is obtained by the geometric interpretation of mixed commutator lengths, which will be described in the next subsection.

\subsection{Geometric interpretation of $\cl_{G,N}$}

In the original proof of Bavard's duality theorem, Bavard used the geometric interpretation of the commutator lengths described as follows. Note that for an element $x \in [G,G]$, a map $f \colon S^1 \to BG$ representing the homotopy class $x \in G \cong \pi_1(BG)$ can be extended to a compact orientable surface with boundary $S^1$. Here $BG$ denotes the classifying space of $G$.
The commutator length of $x \in [G,G]$ in $G$ equals the minimum genus of compact orientable surface $\Sigma$ with boundary $S^1$ such that there exists a continuous map $F \colon \Sigma \to BG$ satisfying that the homotopy class of $F|_{\partial \Sigma}$ coincides with $x \in G \cong \pi_1(BG)$.

In the rest of this section we assume surfaces to be compact and orientable. A \emph{simplicial surface} is a $\Delta$-complex (see \cite{Hatcher} or \cite{KKMM1} for the precise definition of the $\Delta$-complex) which is homeomorphic to a surface. For a simplicial surface $\Sigma$, let $E(\Sigma)$ be the set of edges of $\Sigma$, and $T(\Sigma)$ the set of triangles of $\Sigma$. Then every triangle $\sigma$ in $\Sigma$ is surrounded by the edges $\partial_0 \sigma$, $\partial_1 \sigma$ and $\partial_2 \sigma$ as is depicted in Figure~\ref{fig 1}.

A \emph{$G$-labelling} on a simplicial surface $\Sigma$ is a function $f \colon E(\Sigma) \to G$ such that $f(\partial_1 \sigma) = f(\partial_2 \sigma) \cdot f(\partial_0 \sigma)$ for every $\sigma \in T(\Sigma)$. A \emph{$(G,N)$-labelling} on $\Sigma$ is a $G$-labelling on $\Sigma$ such that for every $\sigma \in T(\Sigma)$ either $f(\partial_0 \sigma)$ or $f(\partial_2 \sigma)$ belongs to $N$. A $(G,N)$-simplicial surface is a simplicial surface together with its $(G,N)$-labelling.

\begin{definition}
Let $x \in G$. A \emph{$(G,N)$-simplicial surface with boundary $x$} is a simplicial surface $\Sigma$ with a $(G,N)$-labelling $f$ satisfying that the boundary $\partial \Sigma$ consists of one vertex and one edge $e$, and that $f(e) = x$.
\end{definition}

The following is the geometric interpretation of the mixed commutator length:

\begin{prop}[{\cite[Proposition~4.8]{KKMM1}}] \label{prop interpretation}
Let $N$ be a normal subgroup of a group $G$, $n$ a non-negative integer, and let $x \in G$. Then the following are equivalent:
\begin{enumerate}[$(1)$]
\item There exist $n$ $(G,N)$-commutators $c_1$, $\cdots$, $c_n$ whose product $c_1 \cdots c_n$ is $x$.

\item There exists a $(G,N)$-simplicial surface with boundary $x$ whose genus is $n$.
\end{enumerate}
In particular, if there exists a $(G,N)$-simplicial surface with boundary $x$, then $x$ belongs to $[G,N]$.
\end{prop}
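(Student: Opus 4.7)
The plan is to prove the two implications of Proposition \ref{prop interpretation} separately via explicit combinatorial constructions on $(G,N)$-simplicial surfaces, closely paralleling the classical geometric interpretation of commutator length but with the additional constraint that every triangle must have either its $\partial_0$- or its $\partial_2$-edge labeled by an element of $N$. For the implication $(1) \Rightarrow (2)$, I would build the $(G,N)$-simplicial surface directly from a word $c_1 \cdots c_n = x$ with each $c_i = [g_i, y_i]$, $y_i \in N$. The basic building block is a $(G,N)$-triangulated once-punctured torus whose boundary reads a single mixed commutator $[g, y]$. Exploiting the identity
\[
[g, y] = (g y g^{-1}) \cdot y^{-1},
\]
in which $g y g^{-1} \in N$ by the normality of $N$, one introduces an interior diagonal labeled $g y g^{-1}$ (or $gy$) that splits the punctured torus into triangles, each of which has at least one of its $\partial_0$- or $\partial_2$-edges labeled by an element of $N$. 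For a general product $x = c_1 \cdots c_n$, I would join $n$ such once-punctured tori successively by boundary connected sum, with small re-triangulations in the gluing regions whose internal labels can again be chosen in $N$, producing a genus-$n$ $(G,N)$-simplicial surface whose boundary reads $x$.

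For the implication $(2) \Rightarrow (1)$, I would extract a product of $n$ mixed commutators from a given $(G,N)$-simplicial surface $\Sigma$ of genus $n$ with boundary $x$. The natural route is the classical polygon model: cutting $\Sigma$ along $2n$ simple arcs based at the boundary vertex opens $\Sigma$ into a $(4n+1)$-gon whose sides are identified in the pattern $a_1 b_1 a_1^{-1} b_1^{-1} \cdots a_n b_n a_n^{-1} b_n^{-1}$ with one unidentified side reading $x$, so that the identifications yield $x = \prod_{i=1}^{n} [a_i, b_i]$ in $G$. The task is then to arrange the cut arcs, routed along edges of the given triangulation, so that in each pair $(a_i, b_i)$ at least one of the two labels lies in $N$; this is where the triangle-level $(G,N)$-condition is the crucial input. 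Alternatively, one can proceed by induction on the number of triangles of $\Sigma$, using local $(G,N)$-preserving moves (such as combining two adjacent triangles and re-triangulating the resulting quadrilateral) to reduce $\Sigma$ to a normal form from which the mixed-commutator decomposition can be read off directly.

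The main obstacle throughout is preserving the $(G,N)$-condition under the combinatorial constructions. In $(1) \Rightarrow (2)$, the tightest constraint is the explicit triangulation of the building-block punctured torus and the gluing step, where interior edge labels must be chosen to exploit the normality of $N$. In $(2) \Rightarrow (1)$, the analogous difficulty is choosing the cut arcs (or applying the local moves) so as to be compatible with the prescribed $(G,N)$-labelling on every triangle of $\Sigma$. Once these combinatorial arrangements are in place, the remaining steps---computing boundaries and identifying algebraically with a product of mixed commutators---are routine.
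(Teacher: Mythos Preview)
The survey paper does not actually prove this proposition: it is quoted from \cite{KKMM1} and followed only by a remark explaining why it specializes to the classical geometric interpretation of $\cl_G$ via maps to $BG$. So there is no in-paper proof to compare your sketch against; the proof lives in the cited source.

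That said, your proposal is a plan rather than a proof, and for the harder direction $(2)\Rightarrow(1)$ it contains a genuine gap. You propose to cut a genus-$n$ $(G,N)$-simplicial surface along $2n$ arcs ``routed along edges of the given triangulation'' so that the resulting polygon word yields $x=\prod_i[a_i,b_i]$ with one of $a_i,b_i$ in $N$. But the $(G,N)$-condition is purely local (each triangle has an $N$-label on its $\partial_0$- or $\partial_2$-edge), and you give no mechanism that forces the global handle curves to pick up labels in $N$. An arbitrary cut system need not have this property, and ``choose the arcs compatibly'' is exactly the content of the proposition, not an argument for it. Your alternative---an induction on the number of triangles via unspecified ``local $(G,N)$-preserving moves''---has the same defect: you have not exhibited a move set that (a) strictly decreases a complexity, (b) preserves the $(G,N)$-labelling condition, and (c) terminates in a normal form from which mixed commutators can be read off. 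Until such moves are written down and checked, this direction is unproved.

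The direction $(1)\Rightarrow(2)$ is in better shape: your idea of triangulating a once-punctured torus for $[g,y]$ using the diagonal $gyg^{-1}\in N$ and then taking boundary connected sums is the right one, and the normality of $N$ does supply enough $N$-labeled edges. Even here, though, you should actually draw the triangulation and check triangle-by-triangle that the $\partial_0/\partial_2$ condition holds, including in the gluing regions; ``small re-triangulations whose internal labels can again be chosen in $N$'' is an assertion, not a verification.
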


Note that Proposition~\ref{prop interpretation} is a generalization of the geometric interpretation of the ordinary commutator length. Indeed, if there exists a $G$-labelling $f$ on a simplicial surface $\Sigma$ with boundary $x$, then we can construct a continuous map $F \colon \Sigma \to BG$ such that the homotopy class of $f|_{\partial \Sigma}$ coincides with $x$ as follows: first, $F$ sends every vertex of $\Sigma$ to the basepoint of $BG$. For every edge $e$ of $\Sigma$, we define $F|_e$ so that $F|_e$ represents the homotopy class of $f(e) \in G \cong \pi_1(BG)$. Now we have constructed the continuous map from the $1$-skeleton of $\Sigma$. By the equation $f(\partial_1 \sigma) = f(\partial_2 \sigma) \cdot f(\partial_0 \sigma)$ for every triangle $\sigma$ in $\Sigma$, this continuous map from the $1$-skeleton of $\Sigma$ extends to the whole surface $\Sigma$.  Conversely, if $F \colon \Sigma \to BG$ is a continuous map such that the homotopy class of $F|_{\partial \Sigma}$ coincides with $x$, then it is straightforward to construct a $G$-labelling on $\Sigma$ with boundary $x$.

The number of triangles of $(G,N)$-simplicial surface whose boundary is $x$ is related to the filling norm of $x$, and we obtain
\[ \textrm{fill}_{G,N} = 4 \cdot \scl_{G,N}(x) \]
for every $x \in [G,N]$. By this, we have the Bavard duality theorem for invariant quasimorphisms (Theorem~\ref{thm:bavard}).

\begin{figure}[t]
\begin{picture}(100,100)(0,0)
\put(0,40){\vector(3,-2){60}}
\put(0,40){\vector(3,1){96}}
\put(60,0){\vector(1,2){36}}

\put(49,32){$\sigma$}

\put(83,25){$\partial_0 \sigma$}
\put(16,6){$\partial_2 \sigma$}
\put(33,64){$\partial_1 \sigma$}

\end{picture}
\caption{} \label{fig 1}
\end{figure}


\section{The space of non-extendable quasimorphisms} \label{sp sec}

In this section, we explain the results in \cite{K2M3} related to the vector spaces $\VVV(G, N)$ and $\WWW(G,N)$.
These spaces are central objects of the extension problem of quasimorphisms. We first recall the notation and terminology introduced in Section 1. Let $N$ be a normal subgroup of a group $G$, and $\QQQ(N)^G$ the vector space of $G$-invariant homogeneous quasimorphisms on $N$. Let $i^\ast$ denote the restriction map $\QQQ(G) \to \QQQ(N)^G, \phi \mapsto \phi |_N$. Then set
\[ \VVV(G, N) := \QQQ(N)^G / i^\ast \QQQ(G), \quad \WWW(G,N) := \QQQ(N)^G / (\HHH^1(N)^G + i^\ast \QQQ(G)).\]
Note that $\VVV(G, N) = 0$ implies that every $G$-invariant homogeneous quasimorphism on $N$ is extendable to $G$. Hence Theorem~\ref{virtually split} implies that if the projection $p \colon G \to \Gamma$ has a virtual section, then $\VVV(G,N) = 0$. Contrastingly, Shtern's work (Example~\ref{eg:shtern}) and the work (Theorem~\ref{thm:py}) by some of the authors
are reinterpretated as the first works of proving non-vanishing of $\VVV(G,N)$ and $\WWW(G,N)$, respectively.

To state the results in \cite{K2M3}, we review the definition of bounded cohomology. For a comprehensive introduction to this subject, see \cite{Fr}.

Let $C^n_b(G)$ be the subspace of $C^n(G)$ consisting of bounded functions on $G^n$  (the definition of $C^n(G)$ is given in Subsection \ref{subsec:rotation_invq}).
Then the subspaces $C^n_b(G)$ form a subcomplex of $C^n(G)$, and its cohomology group is the \emph{bounded cohomology group $\HHH_b^n(G)$ of $G$}.
The natural inclusion $C^n_b(G) \to C^n(G)$ induces the homomorphism
$c^n_G \colon \HHH^n_b(G) \to \HHH^n(G)$, which is called the \emph{comparison map} of degree $n$.

For a positive integer $k$, a group $G$ is said to be \emph{boundedly $k$-acyclic} if $\HHH^i_b(G) = 0$ for all $1 \le i \le k$.
A group $G$ is \emph{boundedly acyclic} if $G$ is boundedly $k$-acyclic for every $k$.
We mainly care about the case $k \geq 3$ for the following two reasons.
Firstly, computing higher-order bounded cohomology is more challenging compared to second bounded cohomology, which is considerably better understood due to its connection with quasimorphisms.
Secondly, the assumption of boundedly 3-acyclicity is used for the results on the spaces of non-extendable quasimorphisms that we will discuss later (Theorems \ref{calculation of dim}, \ref{easy cor} and Corollary \ref{cor:fin_dim}).
We collect known results on bounded $k$-acyclicity for $k\geq 3$.

\begin{thm}[Known results for boundedly acyclic groups]\label{thm=bdd_acyc}
The following hold.
\begin{enumerate}[\textup{(}$1$\textup{)}]
\item \textup{(}\cite{Gr}\textup{)} Every amenable group is boundedly acyclic.
\item \textup{(}\cite{MatsumotoMorita}\textup{)}  Let $n\in \NN$. Then, the group $\Homeo_{c}(\RR^n)$ of homeomorphisms on $\RR^n$ with compact support is boundedly acyclic.

\item \textup{(}combination of \cite{Mon04} and \cite{MonodShalom}\textup{)}
For $n \geq 3$, every lattice in ${\rm SL}(n,\RR)$ is $3$-boundedly acyclic.

\item \textup{(}\cite{BucherMonod}\textup{)} Burger--Mozes groups \cite{BurgerMozes} are $3$-boundedly acyclic.

\item \textup{(}\cite{MR21}\textup{)} Let $k\in \NN$. Let $1\to N \to G \to \Gamma \to 1$ be a short exact sequence of groups. Assume that $N$ is boundedly $k$-acyclic. Then $G$ is boundedly $k$-acyclic if and only if $\Gamma$ is.
\item \textup{(}\cite{FLM1}\textup{)} Every binate group \textup{(}see \cite[Definition~3.1]{FLM1}\textup{)} is boundedly acyclic.
\item \textup{(}\cite{FLM2}\textup{)} There exist continuum many non-isomorphic $5$-generated non-amenable groups that are boundedly acyclic. There exists a finitely presented non-amenable group that is boundedly acyclic.
\item \textup{(}\cite{Monod2021}\textup{)} Thompson's group $F$ is boundedly acyclic.
\item \textup{(}\cite{Monod2021}\textup{)} Let $L$ be an arbitrary group. Let $\Gamma$ be an infinite amenable group. Then the wreath product $L\wr \Gamma =\left( \bigoplus_{\Gamma}L\right)\rtimes \Gamma$ is boundedly acyclic.

\item \textup{(}\cite{MN21}\textup{)} For every integer $n$ at least two, the identity component $\Homeo_0(S^n)$ of the group of orientation-preserving homeomorphisms of $S^n$ is boundedly $3$-acyclic. The group $\Homeo_0(S^3)$ is boundedly $4$-acyclic.
\end{enumerate}
\end{thm}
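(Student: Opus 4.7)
The plan is to treat the statement as a compendium: each item references a distinct paper with its own technique, so I would organize the write-up around the principal methods in bounded cohomology that collectively cover the list, rather than trying to give a single unified argument.

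For (1) I would follow Gromov's original approach via relatively injective resolutions: an invariant mean on an amenable group $G$ produces a bounded averaging operator splitting the natural inclusion $C_b^\bullet(G)\hookrightarrow \ell^\infty(G^{\bullet+1})^G$, so the bounded cochain complex becomes acyclic in positive degrees. Part (5) is then a direct consequence of the Hochschild--Serre-type five-term exact sequence in bounded cohomology for $1\to N\to G\to \Gamma\to 1$: the vanishing of $\HHH_b^i(N)$ in the relevant range collapses the spectral sequence and yields an isomorphism $\HHH_b^i(G)\cong \HHH_b^i(\Gamma)$ through degree $k$. For (9) I would combine (5) with an Eilenberg-swindle type endomorphism of $\bigoplus_\Gamma L$ induced by the shift on the infinite amenable base $\Gamma$, following Monod: the swindle forces any bounded cocycle on the wreath product to be cohomologous to zero.

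The dissipation/displacement technique covers (2), (6), (8), (10). For $\Homeo_c(\RR^n)$ in (2), a cocycle supported on compact sets can be conjugated into a displaceable region and then trivialized using infinite iteration of a displacing homeomorphism. The binate case (6) formalizes this: by definition every finite subset of a binate group admits a conjugating pair producing a commutator trick, which Fournier-Facio--L\"oh--Moraschini convert into a contracting homotopy on bounded cochains. Thompson's $F$ (8) and $\Homeo_0(S^n)$ (10) are handled by variants of the same swindle, with (10) further requiring an analysis of the sphere minus a point. For (7), one combines binateness with a careful finitely-generated construction to produce the claimed non-amenable examples.

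Parts (3) and (4) require genuinely harder techniques from the ergodic theory of boundary actions. For higher-rank lattices in (3), I would invoke the Monod--Shalom identification of $\HHH_b^\bullet(G)$ with equivariant bounded measurable cochains on a doubly ergodic Poisson boundary, combined with the Burger--Monod vanishing of $\HHH_b^2$ in the continuous setting (for lattices use induction to pass from the ambient Lie group). The vanishing in degree $3$ then follows from the absence of nontrivial alternating invariants on the triple product of the boundary. Part (4) for Burger--Mozes lattices uses an analogous tree-tree boundary argument adapted to the product structure.

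The main obstacle is that no single method covers everything, and (3) is the individually hardest step: controlling $\HHH_b^2$ and $\HHH_b^3$ simultaneously for lattices in $\SL(n,\RR)$ requires the full functional-analytic machinery of relatively injective $L^\infty$-resolutions on Poisson boundaries, together with Howe--Moore-type mixing to kill invariant cochains. A practical write-up should therefore be organized by technique — amenable/spectral sequence methods for (1),(5),(9); dissipation/binate methods for (2),(6),(7),(8),(10); and boundary-theoretic methods for (3),(4) — rather than by list order, with each block citing the appropriate reference for the detailed verification.
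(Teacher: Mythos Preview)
The paper does not prove this theorem. Theorem~\ref{thm=bdd_acyc} is a survey-style compendium: each item carries its own citation, and the paper simply records the results and moves on (the only follow-up is a remark that amenability is closed under extensions, quotients, etc.). So there is no ``paper's proof'' to compare against; your proposal supplies a level of detail the authors deliberately omit.

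That said, a couple of your sketches would need adjustment if you actually wrote them out. For (5), the five-term exact sequence alone only controls degrees $\leq 2$; to get the full equivalence through degree $k$ you need the Hochschild--Serre spectral sequence in bounded cohomology (which you do mention) and the fact that bounded $k$-acyclicity of $N$ kills the higher $E_2^{p,q}$ terms with $0<q\leq k$ --- the phrase ``five-term exact sequence'' undersells what is required. For (8), Monod's argument for Thompson's $F$ is not a displacement swindle in the sense of (2) or (6); it proceeds via a lamplighter/co-induction argument and the vanishing theorem for $L\wr\Gamma$ in (9), so grouping (8) with (2),(6),(10) under ``dissipation'' is organizationally misleading. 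Your account of (3) is accurate in spirit but compresses a substantial body of work; in a written version you would want to separate the Burger--Monod input (vanishing of $\HHH_b^2$ for higher-rank lattices) from the Monod--Shalom input (control of degree $3$ via double ergodicity of the Poisson boundary) more explicitly.

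In short: your plan is a reasonable roadmap to the literature, and goes well beyond what the paper itself does, but a few of the attributions of technique to item are slightly off.
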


Note that the class of amenable groups includes  all finite groups and all abelian groups; this class  is closed under taking subgroups, group quotients, inductive limits and extensions. In particular, all (virtually) solvable groups are amenable, and hence boundedly acyclic by (1) of Theorem \ref{thm=bdd_acyc}.

The following theorems show that the spaces $\VVV(G, N)$ and $\WWW(G,N)$ are not so large when the quotient group $\Gamma = G/N$ is boundedly $3$-acyclic, although $\QQQ(N)^G$ and $i^* \QQQ(G)$ can be infinite-dimensional (Remark \ref{rmk:acyl_hyp}).

\begin{thm}[{\cite[Theorem~1.9]{K2M3}}] \label{calculation of dim}
If the quotient group $\Gamma=\hG/\bG$ is boundedly $3$-acyclic, then
\[\dim  \VVV(G,N)  \leq \dim \HHH^2(\Gamma).\]
Moreover, if $G$ is Gromov-hyperbolic, then
\[\dim \VVV(G,N) = \dim \HHH^2(\Gamma).\]
\end{thm}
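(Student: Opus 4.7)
The plan is to construct an injection
\[
\bar\Psi\colon \VVV(G,N)\hookrightarrow \HHH^2(\Gamma)
\]
that encodes the obstruction to extending a $G$-invariant homogeneous quasimorphism to all of $G$; it extends the transgression $\HHH^1(N)^G\to \HHH^2(\Gamma)$ from homomorphisms to quasimorphisms. Fix a set-theoretic section $s\colon \Gamma\to G$ with $s(e_\Gamma)=e_G$, and let $\sigma(\gamma_1,\gamma_2):=s(\gamma_1)s(\gamma_2)s(\gamma_1\gamma_2)^{-1}\in N$ be the associated factor set. Given $\phi\in \QQQ(N)^G$, define $\tilde\phi\colon G\to \RR$ by $\tilde\phi(n\cdot s(\gamma))=\phi(n)$. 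A direct computation, using that $\phi$ is a $G$-invariant quasimorphism of defect $D(\phi)$, shows that the $2$-cochain $\delta\tilde\phi+p^*(\phi\circ \sigma)$ on $G$ is bounded, with sup-norm at most $2D(\phi)$. Setting $A_\phi:=-\phi\circ\sigma\in C^2(\Gamma)$, it follows that $\delta A_\phi$ is a bounded $3$-cocycle on $\Gamma$. The hypothesis $\HHH_b^3(\Gamma)=0$ lets us choose $B_\phi\in C_b^2(\Gamma)$ with $\delta B_\phi=\delta A_\phi$, so that $A_\phi-B_\phi\in Z^2(\Gamma)$, and we set
\[
\Psi(\phi):=[A_\phi-B_\phi]\in \HHH^2(\Gamma).
\]
The hypothesis $\HHH_b^2(\Gamma)=0$ implies that $\Psi(\phi)$ is independent of the choice of $B_\phi$, and standard cochain-level computations give independence of $s$ and $\tilde\phi$.

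Next I would check the two properties needed for the injection claim. First, $\Psi$ vanishes on $i^*\QQQ(G)$: if $\phi=i^*\Phi$ for some $\Phi\in \QQQ(G)$, take $\tilde\phi=\Phi$, so that $\delta\tilde\phi$ is already globally bounded; then $A_\phi$ is bounded, making $A_\phi-B_\phi$ a bounded $2$-cocycle on $\Gamma$, which represents the zero class in $\HHH^2(\Gamma)$ because $\HHH_b^2(\Gamma)=0$. Thus $\Psi$ descends to $\bar\Psi\colon \VVV(G,N)\to \HHH^2(\Gamma)$. Second, $\bar\Psi$ is injective: if $\Psi(\phi)=0$, then $A_\phi-B_\phi=\delta C$ for some $C\in C^1(\Gamma)$, and the cochain $\tilde\phi-p^*C\colon G\to \RR$ has bounded coboundary by construction, hence is a quasimorphism on $G$. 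Its homogenization $\Phi$ is a homogeneous quasimorphism on $G$ whose restriction to $N$ equals $\phi$ (since $\phi$ is already homogeneous and the additive constant $C(e_\Gamma)$ is killed by homogenization), so $\phi\in i^*\QQQ(G)$. Together these yield $\dim \VVV(G,N)\le \dim \HHH^2(\Gamma)$.

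For the second assertion, assuming $G$ is Gromov-hyperbolic, I would establish the surjectivity of $\bar\Psi$. The crucial input is Mineyev's theorem: the comparison map $\HHH_b^n(G)\to \HHH^n(G)$ is surjective for every $n\ge 2$ when $G$ is Gromov-hyperbolic. Given $\alpha\in \HHH^2(\Gamma)$ represented by a normalized cocycle $A$, the pullback $p^*\alpha\in \HHH^2(G)$ admits a bounded representative, so $p^*A=\beta+\delta u$ for some bounded cocycle $\beta$ on $G$ and some $u\in C^1(G)$. The restriction $u|_N$ is a quasimorphism on $N$, because $p^*A$ vanishes on $N\times N$ (by normalization of $A$) and $\beta|_N$ is bounded. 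The plan is then to take $\phi$ to be the homogenization of $u|_N$ (possibly corrected by an element of $\HHH^1(N)$); an argument dual to the injectivity computation, together with a cocycle chase using $\HHH_b^2(\Gamma)=0$, shows $\Psi(\phi)=\alpha$.

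The main technical obstacle is the $G$-invariantization of the homogenization: a priori the homogenization $\phi_0$ of $u|_N$ is only $G$-invariant up to a crossed homomorphism $\tau\colon G\to \HHH^1(N)$, $\tau(g):=g\cdot\phi_0-\phi_0$, whose class $[\tau]\in \HHH^1(G;\HHH^1(N))$ obstructs invariance. The plan is to exploit the Gromov-hyperbolicity of $G$, together with the freedom to modify the representatives $\beta$ and $u$, to arrange that $[\tau]=0$, so that $\phi_0$ may be corrected by an element of $\HHH^1(N)$ to a genuine element of $\QQQ(N)^G$. This invariantization is where the argument must depart from pure homological formalism and engage with the finer structure of Gromov-hyperbolic groups.
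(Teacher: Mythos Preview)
Your argument for the inequality is correct and is essentially a cochain-level unpacking of the paper's proof. The paper proceeds more abstractly: it establishes a five-term exact sequence
\[
0 \to \QQQ(\Gamma) \to \QQQ(G) \to \QQQ(N)^G \xrightarrow{\tau_{/b}} \HHH_{/b}^2(\Gamma) \to \HHH_{/b}^2(G),
\]
so that $\tau_{/b}$ injects $\VVV(G,N)$ into $\HHH_{/b}^2(\Gamma)$; bounded $3$-acyclicity of $\Gamma$ then gives $\HHH_{/b}^2(\Gamma)\cong \HHH^2(\Gamma)$. Your map $\bar\Psi$ is exactly this composite, written out with an explicit section and factor set. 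For the equality, the paper simply observes that Mineyev's surjectivity of $c_G^2$ forces $\HHH^2(G)\to\HHH_{/b}^2(G)$ to vanish, hence $p^*\colon \HHH_{/b}^2(\Gamma)\to\HHH_{/b}^2(G)$ is zero and $\tau_{/b}$ is onto. This is clean and avoids any cocycle chase.

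Your surjectivity sketch is fine up to the last paragraph, where you flag a non-issue. The quasimorphism $u|_N$ is \emph{automatically} $G$-quasi-invariant, so its homogenization $\phi_0$ is genuinely $G$-invariant; no correction by a crossed homomorphism and no ``finer structure of Gromov-hyperbolic groups'' is needed. Indeed, from $\delta u = p^*A - \beta$ with $A$ normalized one gets, for $g\in G$ and $x\in N$,
\[
u(gx)=u(g)+u(x)+\beta(g,x),\qquad u(gxg^{-1}\cdot g)=u(gxg^{-1})+u(g)+\beta(gxg^{-1},g),
\]
since $p^*A$ vanishes whenever one argument lies in $N$. Subtracting (and using $gxg^{-1}\cdot g = gx$) yields
\[
u(gxg^{-1})-u(x)=\beta(g,x)-\beta(gxg^{-1},g),
\]
which is bounded because $\beta$ is. Hence $\phi_0\in\QQQ(N)^G$, and the verification $\Psi(\phi_0)=\alpha$ goes through by the same cocycle bookkeeping you used for injectivity. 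So your proof is complete once you drop the final paragraph and insert this two-line computation instead.
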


\begin{thm}[{\cite[Theorem~1.10]{K2M3}}] \label{easy cor}
If $\Gamma=G/N$ is boundedly $3$-acyclic, then
there exists
an isomorphism
  \[
    \WWW(G,N) \cong \Im (p^\ast) \cap \Im (c^2_G),
  \]
  where $c^2_G\colon \HHH_b^2(G) \to \HHH^2(G)$ is the comparison map and  $p^\ast \colon \HHH^2(\Gamma) \to \HHH^2(G)$ is the map induced by the projection $p \colon G \to \Gamma$.
In particular, if $\Gamma$ is boundedly $3$-acyclic, then
\[\dim \WWW(G,N) \le \dim \HHH^2(G).\]
\end{thm}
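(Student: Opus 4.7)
The plan is to construct an explicit isomorphism $\bar\Theta \colon \WWW(G,N) \to \Im(p^*) \cap \Im(c^2_G)$ using the boundedly $3$-acyclicity of $\Gamma$. First, I would invoke the five-term exact sequence in bounded cohomology displayed in Subsection 1.3. Since $\HHH_b^2(\Gamma) = \HHH_b^3(\Gamma) = 0$, this exact sequence collapses to show that $i^* \colon \HHH_b^2(G) \to \HHH_b^2(N)^G$ is an isomorphism. For $\phi \in \QQQ(N)^G$, the coboundary $\delta\phi$ is a $G$-invariant bounded $2$-cocycle on $N$, so $[\delta\phi] \in \HHH_b^2(N)^G$ lifts uniquely via $(i^*)^{-1}$ to a class $\alpha_\phi \in \HHH_b^2(G)$, and I set $\Theta(\phi) := c^2_G(\alpha_\phi) \in \HHH^2(G)$.

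Next I would verify that $\Theta$ lands in $\Im(p^*) \cap \Im(c^2_G)$ and descends to a map on $\WWW(G,N)$. The commutative square
\begin{equation*}
\begin{array}{ccc}
\HHH_b^2(G) & \xrightarrow{c^2_G} & \HHH^2(G) \\
i^*\downarrow & & \downarrow i^* \\
\HHH_b^2(N)^G & \xrightarrow{c^2_N} & \HHH^2(N)^G
\end{array}
\end{equation*}
yields $i^*\Theta(\phi) = c^2_N[\delta\phi] = 0$ since $\delta\phi$ is a genuine coboundary in $C^\bullet(N)$. Combined with the Hochschild--Serre five-term exact sequence $\HHH^1(N)^G \to \HHH^2(\Gamma) \xrightarrow{p^*} \HHH^2(G) \xrightarrow{i^*} \HHH^2(N)^G$ in ordinary cohomology, this places $\Theta(\phi)$ in $\Im(p^*) \cap \Im(c^2_G)$. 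For $\phi \in \HHH^1(N)^G$, the map $\Theta$ vanishes trivially; for $\phi = i^*\psi$ with $\psi \in \QQQ(G)$, we have $\alpha_\phi = [\delta\psi] \in \ker c^2_G$, so again $\Theta(\phi) = 0$. Hence $\Theta$ induces $\bar\Theta$.

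For injectivity, if $\bar\Theta([\phi]) = 0$ then $\alpha_\phi \in \ker c^2_G$, so $\alpha_\phi = [\delta\psi]$ for some $\psi \in \QQQ(G)$, forcing $\delta(\phi - i^*\psi)$ to be a bounded coboundary on $N$. Since a homogeneous quasimorphism differing from a homomorphism by a bounded function must coincide with that homomorphism, one concludes $\phi - i^*\psi \in \HHH^1(N)$; and since $\phi$ and $i^*\psi$ are both $G$-invariant, so is the difference, witnessing $[\phi] = 0$ in $\WWW(G,N)$. The ``in particular'' upper bound $\dim \WWW(G,N) \leq \dim \HHH^2(G)$ is then immediate since $\Im(p^*) \cap \Im(c^2_G) \subseteq \HHH^2(G)$.

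The hardest step will be surjectivity: given $\beta = c^2_G(\alpha) \in \Im(p^*) \cap \Im(c^2_G)$, one must produce $\phi \in \QQQ(N)^G$ satisfying $[\delta\phi] = i^*\alpha$ in $\HHH_b^2(N)^G$. The condition $\beta \in \Im(p^*) = \ker i^*$ places $i^*\alpha$ in $\ker c^2_N$, so $i^*\alpha$ is represented by a bounded cocycle that is a coboundary in $C^\bullet(N)$; the standard homogenization procedure then produces a homogeneous $\bar\phi \in \QQQ(N)$ realizing the class in $\HHH_b^2(N)$. The remaining obstruction is the assignment $g \mapsto g^*\bar\phi - \bar\phi \in \HHH^1(N)$, which vanishes on $N$ by Lemma \ref{basic lemma} and hence defines a class in $\HHH^1(\Gamma; \HHH^1(N))$ obstructing modification of $\bar\phi$ to a $G$-invariant representative. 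The main obstacle will be showing this obstruction vanishes in the boundedly $3$-acyclic setting, presumably via a comparison of the Hochschild--Serre spectral sequences for ordinary and bounded cohomology with twisted coefficients.
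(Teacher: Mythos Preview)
Your route differs from the paper's. The paper does not use the bounded five-term sequence from Subsection~1.3 at all; instead it invokes Theorem~\ref{thm:5-term}, the five-term exact sequence for the \emph{quotient} cohomology $\HHH_{/b}^{\bullet}$, together with diagram~(\ref{diagram_coh_qm_rel}). Since $\Gamma$ is boundedly $3$-acyclic, $\xi_4 \colon \HHH^2(\Gamma) \to \HHH_{/b}^2(\Gamma)$ is an isomorphism, and the isomorphism of the theorem is induced by the composite $p^\ast \circ (\xi_4)^{-1} \circ \tau_{/b} \colon \QQQ(N)^G \to \HHH^2(G)$. A short diagram chase in~(\ref{diagram_coh_qm_rel}) identifies the image as $\Im(p^\ast) \cap \Im(c^2_G)$ (using $\ker\xi_5 = \Im c^2_G$) and the kernel as $\HHH^1(N)^G + i^\ast\QQQ(G)$ (using exactness of the top row at $\HHH^2(\Gamma)$).

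Your argument has two genuine gaps, and they are two faces of the same problem. First, the sequence you cite in ordinary cohomology, $\HHH^2(\Gamma) \xrightarrow{p^\ast} \HHH^2(G) \xrightarrow{i^\ast} \HHH^2(N)^G$, is \emph{not} exact at $\HHH^2(G)$: the Hochschild--Serre five-term sequence terminates at $\HHH^2(G)$, and in general $\ker(i^\ast)/\Im(p^\ast)$ receives a contribution from $E_2^{1,1} = \HHH^1(\Gamma;\HHH^1(N))$. So your deduction that $\Theta(\phi) \in \Im(p^\ast)$ from $i^\ast\Theta(\phi) = 0$ is unjustified. Second, for surjectivity you correctly isolate the obstruction to upgrading $\bar\phi \in \QQQ(N)$ to a $G$-invariant quasimorphism as a class in $\HHH^1(\Gamma;\HHH^1(N))$, but bounded $3$-acyclicity of $\Gamma$ says nothing about this ordinary cohomology group with twisted coefficients, and your spectral-sequence comparison is left as a hope rather than an argument. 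The paper's approach sidesteps both issues at once: because $\tau_{/b}$ already lands in $\HHH_{/b}^2(\Gamma)$, the composite with $p^\ast$ has image in $\Im(p^\ast)$ by construction, and surjectivity onto $\Im(p^\ast) \cap \Im(c^2_G)$ is a direct consequence of exactness of the bottom row of~(\ref{diagram_coh_qm_rel}), with no obstruction theory needed.
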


We obtain the following corollary to Theorems \ref{calculation of dim} and \ref{easy cor}, in contrast to Remark \ref{rmk:acyl_hyp}.

\begin{cor} \label{cor:fin_dim}
Assume that $\Gamma = G/N$ is boundedly $3$-acyclic.

\begin{enumerate}[$(1)$]
    \item If $\Gamma$ is finitely presented, then $\VVV(G,N)$ is finite-dimensional. In particular, $\WWW(G,N)$ is finite-dimensional.
    \item If $G$ is finitely presented, then $\WWW(G,N)$ is finite-dimensional.
\end{enumerate}
\end{cor}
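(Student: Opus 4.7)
The plan is to observe that the statement follows almost immediately from Theorems \ref{calculation of dim} and \ref{easy cor}, once one recalls the classical fact that finitely presented groups have finite-dimensional second cohomology with real coefficients.

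For part (1), since $\Gamma$ is boundedly $3$-acyclic, Theorem \ref{calculation of dim} provides the bound $\dim \VVV(G,N) \leq \dim \HHH^2(\Gamma)$, so I need only show that $\HHH^2(\Gamma;\RR)$ is finite-dimensional. My approach is to fix a finite presentation $2$-complex $X$ for $\Gamma$ and model $B\Gamma$ by attaching cells of dimension at least $3$ to $X$. The cochain complex of the pair $(B\Gamma, X)$ is concentrated in degrees $\geq 3$, hence $\HHH^2(B\Gamma, X;\RR) = 0$, and the long exact sequence of the pair yields an injection $\HHH^2(\Gamma;\RR) \hookrightarrow \HHH^2(X;\RR)$. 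Since $X$ has only finitely many $2$-cells, the latter group is finite-dimensional. This bounds $\dim \VVV(G,N)$, and because $\WWW(G,N)$ is the further quotient of $\VVV(G,N)$ by the image of $\HHH^1(N)^G + i^*\QQQ(G)$, we obtain $\dim \WWW(G,N) < \infty$ as well.

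For part (2), I would run the same presentation-complex argument, but this time applied to $G$ itself, to conclude that $\HHH^2(G;\RR)$ is finite-dimensional. Since $\Gamma$ is boundedly $3$-acyclic, Theorem \ref{easy cor} provides the isomorphism $\WWW(G,N) \cong \Im(p^*) \cap \Im(c^2_G)$, and in particular exhibits $\WWW(G,N)$ as a subspace of $\HHH^2(G;\RR)$. Finite-dimensionality follows at once.

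I do not expect any serious obstacle: the substantive content is already packaged into Theorems \ref{calculation of dim} and \ref{easy cor}, and the only additional ingredient is the standard presentation-complex argument bounding $\HHH^2$, which is essentially Hopf's computation on the cohomological side. If any care is required, it is only in noting, for part (1), that the surjection $\VVV(G,N) \twoheadrightarrow \WWW(G,N)$ arises from the inclusion $i^*\QQQ(G) \subseteq \HHH^1(N)^G + i^*\QQQ(G)$, which is a formal consequence of the definitions.
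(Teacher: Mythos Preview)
Your proposal is correct and follows precisely the approach the paper intends: the corollary is stated as an immediate consequence of Theorems~\ref{calculation of dim} and~\ref{easy cor}, and the only extra ingredient is the standard fact that a finitely presented group has finite-dimensional $\HHH^2$ with real coefficients, which you supply via the presentation $2$-complex argument. Your observation that $\WWW(G,N)$ is a quotient of $\VVV(G,N)$ is exactly what is needed for the ``in particular'' clause in part~(1).
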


We also consider the cohomology groups $\HHH_{/b}^{\bullet}(G)$ of
the relative complex $C^{\bullet}(G)/C^{\bullet}_b(G)$.
An exact sequence of complexes
\[ 0 \to C^{\bullet}_b(G) \to C^{\bullet}(G) \to C^{\bullet}(G)/C^{\bullet}_b(G) \to 0 \]
induces the long exact sequence
\[ \cdots \to \HHH_b^1(G) \to \HHH^1(G) \to \HHH_{/b}^1(G) \to \HHH^2_b(G) \to \HHH^2(G) \to \HHH_{/b}^2(G) \to \cdots .\]
Since $\HHH_b^1(G)=0$ and $\HHH^1_{/b}(G) \cong \QQQ(G)$, we obtain the following well-known sequence (see \cite[Theorem 2.50]{Ca})
\[ 0 \to \HHH^1(G) \to \QQQ(G) \to \HHH^2_b(G) \to \HHH^2(G). \]
Hence, the kernel of the comparison map $\Ker( c^2_G )$ is isomorphic to $\QQQ(G)/\HHH^1(G)$.

The authors proved the following five-term exact sequence of the cohomology $\HHH^\bullet_{/ b}$ which extends
the well known exactness sequence (see \cite[Remark 2.90]{Ca})
\[0 \to \QQQ(\Gamma) \xrightarrow{p^*} \QQQ(G) \xrightarrow{i^*} \QQQ(N)^G.\]

\begin{thm}[{\cite[Theorem 1.5]{K2M3}}] \label{thm:5-term}

For exact sequence $(\star)$, there exists an exact sequence

\begin{align}\label{seq:5-term}
  0 \to \QQQ(\ppi) \xrightarrow{p^*} \QQQ(\hG) \xrightarrow{i^*} \QQQ(\bG)^{\hG} \xrightarrow{\tau_{/b}} \HHH_{/b}^2(\ppi) \xrightarrow{p^*} \HHH_{/b}^2(\hG).
\end{align}

Moreover, exact sequence \eqref{seq:5-term} is compatible with the five-term exact sequence of group cohomology, that is, the following diagram commutes:

\begin{align}\label{diagram_coh_qm_rel}
  \xymatrix{
  0 \ar[r] & \HHH^1(\ppi) \ar[r]^-{p^*} \ar[d]^-{\tae_1} & \HHH^1(\hG) \ar[r]^-{i^*} \ar[d]^-{\tae_2} & \HHH^1(\bG)^{\hG} \ar[r]^-{\tau} \ar[d]^-{\tae_3} & \HHH^2(\ppi) \ar[r]^-{p^*} \ar[d]^-{\tae_4} & \HHH^2(\hG) \ar[d]^-{\tae_5} \\
  0 \ar[r] & \QQQ(\ppi) \ar[r]^-{p^*} & \QQQ(\hG) \ar[r]^-{i^*} & \QQQ(\bG)^{\hG} \ar[r]^-{\tau_{/b}} & \HHH_{/b}^2(\ppi) \ar[r]^-{p^*} & \HHH_{/b}^2(\hG).
  }
\end{align}

\end{thm}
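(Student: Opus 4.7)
The plan is to derive exact sequence \eqref{seq:5-term} from a Hochschild--Serre-type argument for the relative cochain complex $C^\bullet_{/b}(G) := C^\bullet(G)/C^\bullet_b(G)$. Using the short exact sequence of $G$-cochain complexes $0 \to C^\bullet_b \to C^\bullet \to C^\bullet_{/b} \to 0$ together with the vanishing $\HHH^1_b = 0$ in real coefficients, one obtains canonical identifications $\HHH^1_{/b}(\hG) \cong \QQQ(\hG)$ and $\HHH^1_{/b}(\bG)^{\hG} \cong \QQQ(\bG)^{\hG}$ (the latter using that inner automorphisms act trivially on cohomology, so the conjugation $\hG$-action factors through $\ppi$). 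Under these identifications, sequence \eqref{seq:5-term} becomes a five-term inflation--restriction sequence for $\HHH^\bullet_{/b}$ in low degrees.

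Concretely, I would construct the connecting map $\tau_{/b}$ by an explicit cocycle formula. Fix a set-theoretic section $s \colon \ppi \to \hG$ of $p$ with $s(1_\ppi) = 1_\hG$, and for $\phi \in \QQQ(\bG)^\hG$ set
\[
c_\phi(\gamma_1, \gamma_2) := \phi\bigl( s(\gamma_1) s(\gamma_2) s(\gamma_1 \gamma_2)^{-1} \bigr),
\]
which is well defined because the argument lies in $\bG$. The quasimorphism property and the $\hG$-invariance of $\phi$ imply $\delta c_\phi \in C^3_b(\ppi)$, and the class $[c_\phi] \in \HHH^2_{/b}(\ppi)$ is independent of $s$. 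Set $\tau_{/b}(\phi) := [c_\phi]$.

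For exactness: injectivity of $p^*$ follows from surjectivity of $p$. Exactness at $\QQQ(\hG)$ uses Lemma~\ref{basic lemma}: a homogeneous quasimorphism $\psi$ on $\hG$ with $\psi|_\bG = 0$ is $\hG$-invariant, and a standard limiting argument on powers shows $\psi$ is constant on $\bG$-cosets, hence descends to $\QQQ(\ppi)$. The heart of the proof is exactness at $\QQQ(\bG)^\hG$. If $\phi = i^* \hat\phi$, then setting $u(\gamma) := \hat\phi(s(\gamma))$ makes $c_\phi - \delta u$ bounded by the quasimorphism property of $\hat\phi$, so $\tau_{/b}(\phi) = 0$. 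Conversely, given $c_\phi = \delta u + b$ with $b \in C^2_b(\ppi)$, define
\[
\hat\phi(g) := \phi\bigl(s(p(g))^{-1} g\bigr) + u(p(g));
\]
a direct computation, decomposing $g_1 g_2$ through $s(p(g_1 g_2))$ and invoking $\hG$-invariance of $\phi$ to convert conjugations, shows $\delta \hat\phi(g_1,g_2) \approx c_\phi(p(g_1),p(g_2)) - \delta u(p(g_1),p(g_2)) = b(p(g_1),p(g_2))$, which is bounded. Thus $\hat\phi$ extends $\phi$ to a quasimorphism on $\hG$, which can be homogenized. Exactness at $\HHH^2_{/b}(\ppi)$ follows by a mirror construction: given $[a] \in \ker(p^*)$ witnessed by $p^*a - \delta v \in C^2_b(\hG)$, the restriction $\phi := -v|_\bG$ (after normalizing $a$ and $v$) yields a $\hG$-invariant quasimorphism with $\tau_{/b}(\phi) = [a]$.

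The compatibility with the five-term exact sequence of ordinary group cohomology expressed by diagram \eqref{diagram_coh_qm_rel} then follows from naturality: the vertical maps $\xi_i$ are induced by the projection $C^\bullet \to C^\bullet_{/b}$ of cochain complexes, which intertwines $p^*$, $i^*$, the ordinary connecting map $\tau$, and the cocycle formula for $\tau_{/b}$. The main obstacle will be the careful bookkeeping of bounded error terms in the exactness arguments at $\QQQ(\bG)^\hG$ and at $\HHH^2_{/b}(\ppi)$, in particular verifying that the cochain-level extension $\hat\phi$ has bounded defect; this is precisely where both the $\hG$-invariance of $\phi$ and the explicit character of the cocycle $c_\phi$ are indispensable.
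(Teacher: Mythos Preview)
The paper is a survey and does not include a proof of this theorem; it is stated with a citation to \cite{K2M3} and then immediately used to deduce Theorems~\ref{calculation of dim} and~\ref{easy cor}. There is therefore no ``paper's own proof'' to compare against here.

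That said, your proposal is correct and is the standard way such a result is established: one realizes sequence~\eqref{seq:5-term} as the low-degree inflation--restriction sequence for the relative theory $\HHH^\bullet_{/b}$, using the identifications $\HHH^1_{/b}\cong \QQQ$ (which the paper itself recalls just before the theorem) and constructing the transgression $\tau_{/b}$ explicitly via a set-theoretic section. Your outline of exactness at each term is accurate; in particular, the verification at $\QQQ(\bG)^{\hG}$ that the cochain-level extension $\hat\phi(g)=\phi(s(p(g))^{-1}g)+u(p(g))$ has bounded defect is exactly the place where $\hG$-invariance of $\phi$ enters, as you note. The compatibility diagram~\eqref{diagram_coh_qm_rel} follows by naturality of the cochain-level construction under the projection $C^\bullet\to C^\bullet_{/b}$, since the classical transgression $\tau$ is given by the same cocycle formula applied to genuine homomorphisms. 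There is no gap in your plan.
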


By exact sequence \eqref{seq:5-term}, we obtain the inequality
$\dim \VVV(G,N) \leq \dim \HHH^2_{/b}(\Gamma)$ as follows.
If $\Gamma$ is boundedly 3-acyclic, then $ \xi_4 \colon \HHH^2_{/b}(\Gamma) \to \HHH^2(\Gamma)$ is an isomorphism.
If $G$ is Gromov-hyperbolic, then the comparison map $c^2_G \colon \HHH_b^2(G)\to \HHH^2(G)$ is surjective (\cite{Gromov}, \cite{Mineyev}) and thus
$\mathrm{Im} (\xi_5) =0$. Hence  $p^\ast \colon \HHH_{/b}^2(\Gamma) \to \HHH_{/b}^2(G)$ is zero and therefore Theorem \ref{calculation of dim} follows.
The isomorphism in Theorem \ref{easy cor} is induced from the map $p^\ast \circ (\tae_4)^{-1} \circ \tau_{/b}$ in diagram \eqref{diagram_coh_qm_rel}.


\section{Comparison problem between stable commutator length and stable mixed commutator length}\label{scl sec}

In this section, we collect results on the comparison problem of scl and mixed scl.
The following theorem shows that the space $\WWW(G,N)$ (see Section~1.3 and Section~5) plays an important role.

\begin{thm}[\cite{KKMM1}, \cite{K2M3}] \label{equiv thm ex}
Assume that $\WWW(G,N) = 0$.
Then
\begin{itemize}
  \item[$(1)$] $\scl_{\hG}$ and $\scl_{\hG,\bG}$ are bi-Lipschitzly equivalent on $[\hG,\bG]$.
  \item[$(2)$] If $\Gamma = G/N$ is  amenable, then $\scl_\hG(x) \le \scl_{\hG, \bG}(x) \le 2 \cdot \scl_{\hG}(x)$ for all $x \in [\hG, \bG]$.
  \item[$(3)$] If $\Gamma = G/N$ is solvable, then $\scl_{\hG}(x) = \scl_{\hG,\bG}(x)$ for all $x \in [\hG,\bG]$.
  \item[$(4)$] 
  If $(\star)$ virtually splits,
  then $\scl_\hG(x) \le \scl_{\hG, \bG}(x) \le 2 \cdot \scl_{\hG}(x)$ for all $x \in [\hG, \bG]$.
\end{itemize}
\end{thm}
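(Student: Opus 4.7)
The overall approach is to apply Bavard's duality to both $\scl_G$ and $\scl_{G,N}$ (Theorems~\ref{original Bavard} and~\ref{thm:bavard}) and to compare the resulting suprema by extending quasimorphisms from $N$ to $G$ with controlled defect. The inequality $\scl_G(x) \le \scl_{G,N}(x)$ on $[G,N]$ is immediate from $\cl_G(x) \le \cl_{G,N}(x)$, since every mixed commutator is in particular a commutator. For the reverse inequality with constant $K$, it suffices to produce, for every $\phi \in \QQQ(N)^G$, an extension $\hat\phi \in \QQQ(G)$ with $D(\hat\phi) \le K \cdot D(\phi)$: then $\hat\phi|_N = \phi$ gives $|\phi(x)|/(2D(\phi)) \le K \cdot |\hat\phi(x)|/(2D(\hat\phi))$, and taking suprema yields $\scl_{G,N}(x) \le K \cdot \scl_G(x)$ via Bavard duality.

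For part~(1), I equip $\QQQ(G)/\HHH^1(G)$ and $\QQQ(N)^G/\HHH^1(N)^G$ with the genuine norms induced by the defect; both are Banach spaces, and the restriction map $R \colon [\psi] \mapsto [\psi|_N]$ is bounded with cokernel exactly $\WWW(G,N)$. The hypothesis $\WWW(G,N) = 0$ makes $R$ surjective, so the open mapping theorem provides $C > 0$ such that every $\phi \in \QQQ(N)^G$ admits $\psi \in \QQQ(G)$ and $h \in \HHH^1(N)^G$ with $\psi|_N = \phi + h$ and $D(\psi) \le C \cdot D(\phi)$. The key observation is that a $G$-invariant homomorphism $h \colon N \to \RR$ vanishes on $[G,N]$, since $h([g,n]) = h(gng^{-1}) - h(n) = 0$ by $G$-invariance; hence $\psi(x) = \phi(x)$ on $[G,N]$, and the preceding paragraph yields $\scl_{G,N}(x) \le C \cdot \scl_G(x)$.

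For parts~(2) and~(4), the task is to refine the abstract constant $C$ to the explicit factor $2$ by a direct construction of the extension. In part~(4), a virtual section $s \colon \Lambda \to G$ makes $G' := p^{-1}(\Lambda) \cong N \rtimes s(\Lambda)$ a split extension, on which the formula $\hat\phi(n \cdot s(\lambda)) := \phi(n)$ defines a quasimorphism with $D(\hat\phi) \le D(\phi)$ by a direct computation using the $G$-invariance of $\phi$; transferring from $G'$ to $G$ via the finite-index inclusion (the defect-control behind Theorem~\ref{virtually split}) costs at most a factor~$2$. In part~(2), amenability of $\Gamma$ gives $\HHH_b^n(\Gamma) = 0$ for all $n \ge 1$; combining this with the Hochschild--Serre-type isomorphism $\HHH_b^2(G) \cong \HHH_b^2(N)^G$ allows one to lift the bounded coclass $[\delta \phi] \in \HHH_b^2(N)^G$ to a bounded cocycle on $G$ of controlled norm and then to assemble an extension $\hat\phi \in \QQQ(G)$ with $D(\hat\phi) \le 2 D(\phi)$.

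The main obstacle is part~(3), which demands the equality $\scl_G(x) = \scl_{G,N}(x)$ for solvable $\Gamma$ rather than a factor-of-$2$ bound. I would induct on the derived length of $\Gamma$, with base case $\Gamma$ abelian handled through a tower of $\ZZ$-quotients, each a split extension on which the formula $\hat\phi(n \cdot s(\lambda)) := \phi(n)$ preserves the defect \emph{exactly} (and not merely up to a factor), giving $D(\hat\phi) = D(\phi)$. The inductive step peels off successive abelian quotients of the derived series. The delicate point is ensuring that at each stage the extended quasimorphism remains invariant under the next larger ambient group, so that the defect-preserving split-extension formula can be reapplied without any constant inflation; simultaneously propagating the invariance and the exact defect identity through the entire derived series is the technical heart of this assertion.
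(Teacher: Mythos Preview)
Your reduction to a defect-controlled extension criterion is exactly the paper's Proposition~\ref{prop equivalence criterion}, including the key observation that $G$-invariant homomorphisms vanish on $[G,N]$; the survey itself stops at that criterion and defers the actual construction of $\phi'$ with controlled defect to the cited sources, so your sketches for (1), (2), and (4) already go further than what appears here and are in the right spirit. One caution for (1): the bald claim that $\QQQ(G)/\HHH^1(G)$ and $\QQQ(N)^G/\HHH^1(N)^G$ are Banach under the defect is not immediate---the completeness result in Section~\ref{prf of bavard} is for the \emph{non-homogeneous} spaces $\widehat{\QQQ}_N(G)/\HHH^1_N(G)$, and passing to homogeneous quasimorphisms requires care. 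The open-mapping argument does go through once the correct Banach spaces are in place.

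The genuine gap is in (3). Your base case treats abelian $\Gamma$ as a tower of $\ZZ$-quotients, but this already fails structurally: finite abelian groups, $\QQ$, and torsion abelian groups admit no surjection onto $\ZZ$, so no such tower exists. Even for $\Gamma = \ZZ^n$ the argument is incomplete in two ways. First, the split-extension formula $\hat\phi(n\cdot s(\lambda)) := \phi(n)$ produces a \emph{non-homogeneous} quasimorphism of defect $\le D(\phi)$, whereas Proposition~\ref{prop equivalence criterion} requires a homogeneous $\phi' \in \QQQ(G)$; homogenization does not in general preserve the defect exactly, so your claimed identity $D(\hat\phi) = D(\phi)$ is unjustified at the homogeneous level. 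Second, you correctly flag but do not resolve the invariance-propagation problem through the tower. Since solvable includes, e.g., $\Gamma = \ZZ/2\ZZ$ with a non-split extension, for which your mechanism gives nothing better than the factor~$2$ from~(4), obtaining $C = 1$ in the criterion for all solvable $\Gamma$ genuinely requires an idea beyond iterating the split-$\ZZ$ formula.
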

We note that the existence of a virtual section implies that $\QQQ(\bG)^{\hG} = i^* \QQQ(\hG)$ (in particular, $\WWW(\hG,\bG)=0$) due to Theorem \ref{virtually split}.

The following criterion is the key to show Theorem \ref{equiv thm ex}.

\begin{prop}[{\cite[Proposition 7.2]{K2M3}}] \label{prop equivalence criterion}
 Let $C$ be a real number such that for every $\phi \in \QQQ(N)^G$ there exists $\phi' \in \QQQ(G)$ satisfying $\phi - \phi'|_N \in \HHH^1(N)^G$ and $D(\phi') \le C \cdot D(\phi)$.
Then for every $x \in [G,N]$,
\[ \scl_G(x) \le \scl_{G,N}(x) \le C \cdot \scl_G(x).\]
\end{prop}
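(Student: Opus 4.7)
The plan is to establish the two inequalities separately. The left inequality $\scl_G(x) \le \scl_{G,N}(x)$ follows with no use of the hypothesis: every mixed commutator $[g,n]$ is in particular an ordinary commutator in $G$, so $\cl_G \le \cl_{G,N}$ pointwise on $[G,N]$, and passing to the stabilization yields the claim.

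For the right inequality, I would combine the two Bavard duality theorems (Theorem \ref{original Bavard} and Theorem \ref{thm:bavard}). The key preliminary observation is that any $G$-invariant homomorphism $h \in \HHH^1(N)^G$ vanishes on every mixed commutator, since for $g\in G$ and $n \in N$,
\[
h([g,n]) = h(gng^{-1}) - h(n) = h(n) - h(n) = 0,
\]
so $h$ vanishes on all of $[G,N]$. Now fix $x \in [G,N]$, and take any $\phi \in \QQQ(N)^G \setminus \HHH^1(N)^G$ (so $D(\phi) > 0$). By the hypothesis, choose $\phi' \in \QQQ(G)$ with $\phi - \phi'|_N \in \HHH^1(N)^G$ and $D(\phi') \le C \cdot D(\phi)$. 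Since $x \in [G,N]$, the preceding observation gives $\phi(x) = \phi'|_N(x) = \phi'(x)$.

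Next I would apply classical Bavard duality to $\phi'$. If $\phi' \notin \HHH^1(G)$, then since $x \in [G,N] \subseteq [G,G]$,
\[
\frac{|\phi(x)|}{2 D(\phi)} \;=\; \frac{|\phi'(x)|}{2 D(\phi)} \;\le\; C\cdot\frac{|\phi'(x)|}{2 D(\phi')} \;\le\; C\cdot \scl_G(x).
\]
If instead $\phi' \in \HHH^1(G)$, then $\phi'(x) = 0$ because $x \in [G,G]$, so $\phi(x) = 0$ and the ratio is zero. Taking the supremum over $\phi$ and invoking Theorem \ref{thm:bavard} on the left-hand side then gives $\scl_{G,N}(x) \le C\cdot \scl_G(x)$.

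Finally, I would dispose of the degenerate case where $\QQQ(N)^G = \HHH^1(N)^G$, in which the invariant Bavard duality convention makes $\scl_{G,N}(x) = 0$, so the inequality is trivial. The argument has no serious obstacle; the only subtlety worth flagging is the vanishing of $G$-invariant homomorphisms on $[G,N]$, which is exactly what allows one to pass from $\phi$ to $\phi'$ without losing information in the numerator while only inflating the defect in the denominator by the factor $C$.
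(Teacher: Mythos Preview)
Your proof is correct and follows essentially the same route as the paper's: both combine the two Bavard dualities (Theorems~\ref{original Bavard} and~\ref{thm:bavard}) with the observation that $G$-invariant homomorphisms vanish on $[G,N]$, so that replacing $\phi$ by $\phi'$ preserves the numerator while inflating the defect by at most $C$. The only cosmetic difference is that the paper uses an $\varepsilon$-approximation to the supremum whereas you bound each ratio directly and then take the supremum; you are in fact slightly more careful than the paper in explicitly treating the cases $\phi'\in\HHH^1(G)$ and $\QQQ(N)^G=\HHH^1(N)^G$.
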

For the convenience of the readers, we write the proof.
In this proof, the Bavard duality is the key.
\begin{proof}
Let $x \in [G,N]$.
The inequality $\scl_G(x) \le \scl_{G,N}(x)$ immediately follows from definition.
Let $\varepsilon > 0$.
Then Theorem \ref{thm:bavard} implies that there exists $\phi \in \QQQ(N)^G$ such that
\[ \scl_{G,N}(x) - \varepsilon \le \frac{\phi(x)}{2D(\phi)}.\]
 By assumption,
there exists $\phi' \in \QQQ(G)$ such that $\phi'' = \phi - \phi'|_N \in \HHH^1(N)^G$ and $D(\phi') \le C \cdot D(\phi)$.
Since $\phi''$ is a $G$-invariant homomorphism and $x \in [G,N]$, we obtain $\phi''(x) = 0$, and hence $\phi'(x) = \phi(x)$.
Hence
\[ \scl_{G,N}(x) - \varepsilon \le \frac{\phi(x)}{2 D(\phi)} \le C \cdot \frac{\phi'(x)}{2D(\phi')} \le C \cdot \scl_G(x).\]
Here we use Theorem \ref{original Bavard} to prove the last inequality.
 Since $\varepsilon$ is an arbitrary positive number, we complete the proof.
\end{proof}

The outline of the proof of Theorem \ref{equiv thm ex} is the following.
Under the conditions of (1), (2) and (3) in Theorem \ref{equiv thm ex}, for each $\phi \in \QQQ(N)^G$, one can take $\phi' \in \QQQ(G)$ with controlled defect.



Finally, we explain some results on the non-equivalence of the stable commutator length and the stable mixed commutator length.
To obtain such results, we use the following criterion. Variants of this criterion have been employed in \cite{KK}, \cite{KKMM2} and \cite{MMM}.

\begin{prop}[\cite{KK}, \cite{KKMM2}, \cite{MMM}]\label{noneq_criterion}
Let $\qm$ be a homogeneous $G$-invariant quasimorphism on $N$.
Assume that $x_i \in [G,N]$ \textup{(}$i=1,2,\ldots$\textup{)} satisfies the following conditions.
\begin{itemize}
\item there exists some positive integer $C$ such that $\cl_G(x_i) \leq C$ for every $i$,
\item $\qm(x_i) \to +\infty$ as $i \to +\infty$.
\end{itemize}
Then,
$[\qm] \neq 0$ in $\WWW(G,N)$, where $[\cdot]$ means the equivalence class in $\WWW(G,N)$. Furthermore,
$\scl_G$ and $\scl_{G,N}$ are not bi-Lipschitzly equivalent.
\end{prop}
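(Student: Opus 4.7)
The plan is to derive both conclusions from two standard ingredients: the Bavard duality theorems (Theorems~\ref{original Bavard} and~\ref{thm:bavard}), and the observation that every $G$-invariant homomorphism on $N$ annihilates the mixed commutator subgroup $[G,N]$. Fix $\qm$, the constant $C$, and the sequence $(x_i)$ as in the hypothesis, and note that $x_i \in [G,N] \subseteq [G,G]$.

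For the first conclusion, I would argue by contradiction. Suppose $[\qm] = 0$ in $\WWW(G,N)$, that is, $\qm = h + i^{\ast}\psi$ for some $h \in \HHH^1(N)^{G}$ and $\psi \in \QQQ(G)$. The $G$-invariance of $h$ gives, for every $g \in G$ and $x \in N$,
\[
h([g,x]) = h(gxg^{-1}) - h(x) = 0,
\]
so $h$ vanishes on $[G,N]$; in particular $h(x_i) = 0$ and hence $\qm(x_i) = \psi(x_i)$ for all $i$. The classical Bavard duality (Theorem~\ref{original Bavard}), applied to $\psi$, then yields
\[
|\psi(x_i)| \;\le\; 2D(\psi)\cdot\scl_G(x_i) \;\le\; 2D(\psi)\cdot\cl_G(x_i) \;\le\; 2C\cdot D(\psi),
\]
which is uniformly bounded in $i$ (the case $\psi \in \HHH^1(G)$ is trivial since then $\psi$ vanishes on $[G,G]$). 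This contradicts $\qm(x_i) \to +\infty$, proving $[\qm] \neq 0$ in $\WWW(G,N)$.

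For the second conclusion, I would use the invariant Bavard duality (Theorem~\ref{thm:bavard}) directly: since $\qm \in \QQQ(N)^G$,
\[
\scl_{G,N}(x_i) \;\ge\; \frac{\qm(x_i)}{2D(\qm)} \;\longrightarrow\; +\infty,
\]
whereas $\scl_G(x_i) \le \cl_G(x_i) \le C$ stays uniformly bounded. Therefore there can be no constant $B$ with $\scl_{G,N} \le B\cdot\scl_G$ on $[G,N]$, and so the two functions are not bi-Lipschitzly equivalent. There is no serious technical obstacle: the proposition is essentially organizational, packaging the vanishing of $G$-invariant homomorphisms on $[G,N]$ together with the two Bavard dualities into a criterion that can be applied as soon as one produces a concrete pair $(\qm, (x_i))$ meeting the growth hypotheses. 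The real work in the applications in \cite{KK}, \cite{KKMM2}, and \cite{MMM} lies precisely in constructing such a pair, not in verifying the criterion.
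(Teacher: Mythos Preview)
Your proof is correct and follows essentially the same route as the paper's: argue by contradiction using a decomposition $\qm = h + i^\ast\psi$, observe that $h$ vanishes on $[G,N]$, bound $\psi(x_i)$ via the classical Bavard duality, and then obtain non-equivalence from the invariant Bavard duality applied to $\qm$. The only cosmetic difference is that you explicitly isolate the edge case $\psi \in \HHH^1(G)$, whereas the paper's inequality $|\psi(x_i)|\le 2D(\psi)\cdot C$ already handles it since both sides are then zero.
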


\begin{proof}
By the first condition, we have that $\scl_G(x_i) \leq C$ for every $i$.
Suppose that
$[\qm] = 0$ in $\WWW(G,N)$,
\emph{i.e.}, there exist $\qm' \in \QQQ(G)$ and $\qm'' \in \HHH^1(N)^G $ such that $\qm'|_N+\qm''=\qm$.
Since $x_i \in [G,N]$, we have $\qm''(x_i)=0$ for every $i$.
Moreover, Theorem \ref{original Bavard} implies that $|\qm'(x_i)| \leq 2D(\qm') \cdot C$ for every $i$.
Hence we obtain that $|\qm(x_i)| \leq 2D(\qm') \cdot C$ for every $i$.
This contradicts the second condition, and hence we see that 
$[\qm] \neq 0$ in $\WWW(G,N)$.

By the second condition and Theorem \ref{thm:bavard}, we have that $\scl_{G,N}(x_i) \to +\infty$ as $x_i \to +\infty$ and hence $\scl_G$ and $\scl_{G,N}$ are not bi-Lipschitzly equivalent.
\end{proof}


We explain two applications of Proposition \ref{noneq_criterion}.
One of them contains Theorem \ref{thm:py}.

Let $\Sigma_l$ be a closed orientable surface whose genus $l\geq 2$ and $\omega$ a symplectic form on $\Sigma_l$.
We set $G=\Symp_0(\Sigma_l,\omega)$ and $N=\Ham(\Sigma_l,\omega)(=[G,G])$.
In \cite{KK}, some of the authors proved Py's Calabi quasimorphism $\qm_P \colon N \to \RR$ and some $x_i\in [G,N](=\Ham(\Sigma_l,\omega))$ ($i=1,2,\ldots$) satisfy the conditions in Proposition \ref{noneq_criterion} and hence  
$[\qm_P] \neq 0$ in $\WWW(G,N)$ (implying Theorem \ref{thm:py}) and $\scl_G$ and $\scl_{G,N}$ are not equivalent.

In \cite{MMM}, some of the authors provided
a construction of invariant quasimorphisms for groups acting on the circle whose Euler class descends to a class on the quotient (for its precise description, see Subsection \ref{subsec:rotation_invq}).
For example, if we take a Fuchsian action $\rho$ of the fundamental group $G=\pi_1(\Sigma_l)$ of a closed orientable surface of genus $l \geq 2$, the construction supplies a $G$-invariant homogeneous quasimorphism $\mu_{\rho}$ on $N=[G,G]$.

As noted in Subsection \ref{subsec:rotation_invq}, the fact $[\mu_{\rho}] \neq 0$ in $\WWW(G,N)$,  the first conclusion  of Proposition \ref{noneq_criterion}, is deduced from Theorem \ref{thm:rotation_nonext}.
 In order to derive the second conclusion of  Proposition~\ref{noneq_criterion}, they also constructed such an sequence $(x_i)_{i\in \NN}$ in $[G,N]$.

(Note that $\WWW(G,N) \neq 0$
had been known in \cite{K2M3}, see (2) of Theorem \ref{raretu of keisan}.)


\section{Examples} \label{sec:ext_prob}

In this section, we collect results on the extension problem and comparison problem.
Most of the results in this section are previously known (see \cite{KKMM1} and \cite{K2M3}), but some of the results (Example \ref{aut_f2} and Theorem \ref{cantor}) are newly mentioned in this paper.

By definition, $\VVV(G,N)=0$ means that every $G$-invariant quasimorphisms on $N$ is extendable. Note that $\VVV(G,N)=0$ implies $\WWW(G,N)=0$.
By (1) of Theorem \ref{equiv thm ex}, if $G$ and $N$ satisfy $\WWW(G,N)=0$,
then $\scl_G$ and $\scl_{G,N}$ are equivalent on $[G,N]$.

By Theorem \ref{virtually split}, 
$\VVV(G,N) = 0$
if $p \colon G \to \Gamma$ virtually splits (in particular $\Gamma$ is virtually free). Thus we obtain the following example.

  \begin{example} \label{aut_f2}
    Let $G=\Aut(F_2)$ and $N=F_2$. Here, $F_2$ is considered as a normal subgroup of $\Aut(F_2)$ under the identification $F_2={\rm Inn}(F_2)$.
    Then,
    $\VVV(G,N)=0$
    since $\Gamma = {\rm Out}(F_2) \cong \GL(2,\ZZ)$ is virtually free.
    In particular, Brandenbursky--Marcinkowski's quasimorphisms in Theorem \ref{thm:BM_aut} are extendable.
  \end{example}

The authors do not know Example \ref{aut_f2} is generalized to $G={\rm Aut}(F_n)$ and $N=F_n$ $(n\geq3)$.
However, a generalization of Example \ref{aut_f2} in the other direction is obtained.
In \cite{K2M3}, the authors proved the following.

\begin{thm}[{\cite[Theorem 1.9]{K2M3}}]\label{ia_extendable}
Let $\IAA_n$ be the \emph{IA-automorphism group} of the free group $F_n$, \emph{i.e.}, the kernel of the natural homomorphism $\Aut(F_n) \to \GL(n,\ZZ)$.
For every $n\geq 2$, $\VVV(\Aut(F_n),\IA_n)=0$ and thus $\scl_G$ and $\scl_{G,N}$ are equivalent. 
\end{thm}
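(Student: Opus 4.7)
The plan is to apply Theorem \ref{calculation of dim} to the short exact sequence
\[
1 \to \IA_n \to \Aut(F_n) \to \GL(n,\ZZ) \to 1,
\]
which will give $\dim \VVV(\Aut(F_n),\IA_n) \leq \dim \HHH^2(\GL(n,\ZZ))$ as soon as $\GL(n,\ZZ)$ is verified to be boundedly $3$-acyclic. Once $\VVV(\Aut(F_n),\IA_n) = 0$ is established, the bi-Lipschitz equivalence of $\scl_G$ and $\scl_{G,N}$ follows automatically: the vanishing of $\VVV$ forces $\WWW(\Aut(F_n),\IA_n) = 0$, and then part (1) of Theorem \ref{equiv thm ex} supplies the desired equivalence. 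I would split the argument into the two cases $n=2$ and $n\geq 3$.

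The case $n=2$ reduces directly to Example \ref{aut_f2}. By Nielsen's theorem, the natural map $\mathrm{Out}(F_2) \to \GL(2,\ZZ)$ is an isomorphism, so the kernel $\IA_2$ of $\Aut(F_2)\to \GL(2,\ZZ)$ coincides with $\mathrm{Inn}(F_2)\cong F_2$. Since $\GL(2,\ZZ)$ is virtually free, any finite-index free subgroup $\Lambda\leq \GL(2,\ZZ)$ lifts homomorphically to $\Aut(F_2)$ by freely prescribing preimages of its free generators, producing a virtual section. Theorem \ref{virtually split} then yields $\VVV(\Aut(F_2),\IA_2)=0$.

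For $n\geq 3$, I would verify the two inputs required to invoke Theorem \ref{calculation of dim}. Bounded $3$-acyclicity of $\GL(n,\ZZ)$ follows from the extension
\[
1 \to \SL(n,\ZZ) \to \GL(n,\ZZ) \to \ZZ/2\ZZ \to 1
\]
together with Theorem \ref{thm=bdd_acyc}(5): the quotient $\ZZ/2\ZZ$ is amenable and hence boundedly acyclic by Theorem \ref{thm=bdd_acyc}(1), while $\SL(n,\ZZ)$, being a lattice in $\SL(n,\RR)$ for $n\geq 3$, is boundedly $3$-acyclic by Theorem \ref{thm=bdd_acyc}(3). For the vanishing $\HHH^2(\GL(n,\ZZ);\RR)=0$, I would appeal to the classical rational cohomology computations for arithmetic groups due to Borel (stable range) and Soul\'e (low rank).

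The main obstacle I anticipate is ensuring $\HHH^2(\GL(n,\ZZ);\RR)=0$ uniformly for all $n\geq 3$. Borel's stability theorem handles sufficiently large $n$ cleanly, but the low-rank cases (notably $n=3,4$) are outside his stable range and require either explicit computation or a separate argument using the known rational cohomology of $\SL(n,\ZZ)$ combined with the Hochschild--Serre spectral sequence for the extension by $\ZZ/2\ZZ$. Once this cohomological input is in place, the rest of the proof is a straightforward assembly of Theorems \ref{calculation of dim}, \ref{equiv thm ex}, \ref{virtually split}, and \ref{thm=bdd_acyc}.
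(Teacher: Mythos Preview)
Your proposal is correct and follows precisely the approach indicated by the survey: the paper cites this result from \cite{K2M3} without giving a self-contained proof, but the tools it assembles point exactly to your argument---Theorem~\ref{virtually split} (via Example~\ref{aut_f2}) for $n=2$, and Theorem~\ref{calculation of dim} together with the bounded $3$-acyclicity of $\GL(n,\ZZ)$ (from Theorem~\ref{thm=bdd_acyc}(3),(5),(1)) and the vanishing of $\HHH^2(\GL(n,\ZZ);\RR)$ for $n\geq 3$. Your identification of the low-rank cohomology input as the only nontrivial external ingredient is accurate.
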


We can regard the above result as a generalization of Example \ref{aut_f2} as follows.
Since $\IAA_2 = \mathrm{Inn}(F_2)$ (\cite{Nielsen}), the exact sequence
\[1 \to \mathrm{Inn}(F_2) \cong F_2 \to \Aut(F_2) \to {\rm Out}(F_2) \to 1,\]
which we considered in Example \ref{aut_f2}, coincides with
\[1 \to \IA_2 \to \Aut(F_2) \to \GL(2,\ZZ) \to 1.\]
Thus Example \ref{aut_f2} implies that
$\VVV(\Aut(F_2),\IA_2)=0$.

Let $G$ be a group with trivial center.
If $G$ satisfies one of the assumption of (1), (2) or (3) of Theorem \ref{thm:autqm}, then Fournier-Facio--Wade's quasimorphisms (in Theorem \ref{thm:autqm}) on $G \cong {\rm Inn}(G)$ are extendable to $\Aut(G)$ since they are constructed by restricting quasimorphisms on $\Aut(G)$.
However, the author do not know the answer of the following problem.

\begin{openproblem}
Let $G$ be a group with trivial center and satisfying the assumption of \textup{(4)} of Theorem \textup{\ref{thm:autqm}}. Are Fournier-Facio--Wade's quasimorphisms extendable?
\end{openproblem}

Next, we consider Problem \ref{large problem 3}, which asks for the difference between $\QQQ(N)^G$ and $\HHH^1(N)^G + i^\ast \QQQ(G)$ (\emph{i.e.}, the non-vanishing of $\WWW(G,N)$).

The following is a list of examples of pairs $(G,N)$ 
 satisfying $\WWW(G,N)=0$.

\begin{thm}[\cite{K2M3}]\label{various_extendable}
    Let $G$ be a group and set $N=[G,G]$.
    If $G$ is one of the following, then
    $\WWW(G,N)=0$.
    \begin{enumerate}[$(1)$]
\item free groups $F_n$.
\item the fundamental group $\pi_1(N_\genus)$ of
a non-orientable closed surface $N_\genus$ with genus $\genus \geq 1$.
\item the knot group $\pi_1(S^3 \setminus K)$ of a knot in $S^3$.
\item the link group $\pi_1(S^3 \setminus L)$ of a hyperbolic link $L$ with two components.
\item the group of the form $F_n / \llangle r_1, \cdots, r_m \rrangle$, where $r_1, \cdots, r_m \in [F_n, [F_n, F_n]]$.
    \end{enumerate}
Here, $\llangle \cdot \rrangle$ means the normal closure \textup{(}in $F_n$\textup{)}.
\end{thm}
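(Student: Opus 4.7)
The plan is to apply Theorem \ref{easy cor} uniformly. In all five situations, $\Gamma = G/N = G^{\ab}$ is finitely generated abelian, hence amenable, hence boundedly $3$-acyclic by Theorem \ref{thm=bdd_acyc}(1), so
\[
\WWW(G,N) \cong \Im(p^*) \cap \Im(c_G^2) \subseteq \HHH^2(G;\RR).
\]
It therefore suffices to show this intersection vanishes in each case, and the natural split is: cases (1)--(3) by vanishing of $\HHH^2(G;\RR)$, case (5) by vanishing of $\Im(p^*)$, and case (4) by a peripheral-subgroup argument using amenability.

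For cases (1)--(3) I would show $\HHH^2(G;\RR)=0$ directly. Free groups have cohomological dimension one; a non-orientable closed surface group is either $\ZZ/2$ (for $\genus=1$) or an aspherical Poincar\'e duality group whose rational top cohomology vanishes by non-orientability; and a knot complement $S^3\setminus K$ is aspherical by Papakyriakopoulos' Sphere Theorem with $\HHH_2=0$ by Alexander duality.

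For case (5), write $\gamma_3 F_n$ for $[F_n,[F_n,F_n]]$. Since $\gamma_3 F_n$ is fully characteristic in $F_n$, the hypothesis $r_i\in\gamma_3 F_n$ gives $R=\llangle r_1,\ldots,r_m\rrangle\subseteq\gamma_3 F_n$, so the abelianization $p\colon G\to\ZZ^n$ factors through the free $2$-step nilpotent group $N_n:=F_n/\gamma_3 F_n$:
\[
G=F_n/R\twoheadrightarrow N_n\twoheadrightarrow\ZZ^n.
\]
By functoriality it suffices to show $\HHH^2(\ZZ^n;\RR)\to\HHH^2(N_n;\RR)$ is zero. By Nomizu's theorem the real cohomology of the torsion-free nilpotent group $N_n$ equals the Chevalley--Eilenberg cohomology of the free $2$-step nilpotent Lie algebra $\mathfrak{n}_n=V\oplus\Lambda^2 V$, where $V=\RR^n$ and the bracket is $[v_1,v_2]=v_1\wedge v_2$. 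A direct computation shows the Chevalley differential $d^1\colon\mathfrak{n}_n^*\to\Lambda^2\mathfrak{n}_n^*$ kills $V^*$ and sends $(\Lambda^2 V)^*$ isomorphically onto $\Lambda^2 V^*\subseteq\Lambda^2\mathfrak{n}_n^*$. Hence every cup product of classes in $\HHH^1(N_n;\RR)=V^*$ is a coboundary; since $\HHH^2(\ZZ^n;\RR)$ is spanned by cup products of $\HHH^1(\ZZ^n;\RR)\cong V^*$, the induced map on $\HHH^2$ vanishes.

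Case (4) is the main obstacle, because now both $\HHH^2(G;\RR)$ (by Alexander duality) and $\HHH^2(\ZZ^2;\RR)$ are isomorphic to $\RR$, so neither image is automatically zero. The essential input is the peripheral structure of $S^3\setminus L$: each cusp supplies an inclusion $P\cong\ZZ^2\hookrightarrow G$ generated by a meridian $m_1$ and longitude $\ell_1$ of one component, with $\ell_1\mapsto\mathrm{lk}(L_1,L_2)\cdot m_2$ in $G^{\ab}=\ZZ^2$. By the classical cup-product/linking-number formula, $p^*(m_1^*\cup m_2^*)$ equals $\mathrm{lk}(L_1,L_2)$ times a generator of $\HHH^2(G;\RR)$, and its restriction along $P\hookrightarrow G$ is $\mathrm{lk}(L_1,L_2)$ times a generator of $\HHH^2(P;\RR)$. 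Since $P$ is amenable, $\Im(c_P^2)=0$, so every $\omega\in\Im(c_G^2)$ restricts to zero in $\HHH^2(P;\RR)$. If $\mathrm{lk}(L_1,L_2)=0$ then $\Im(p^*)=0$ already; if $\mathrm{lk}(L_1,L_2)\neq 0$ then the generator of $\Im(p^*)=\HHH^2(G;\RR)$ restricts nontrivially to $P$ and cannot lie in $\Im(c_G^2)$. Either way the intersection is zero. The main care required is in the classical cup-product/linking-number identification, which ultimately uses Alexander duality to select the natural generator of $\HHH_2(G;\RR)$.
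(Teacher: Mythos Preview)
Your argument is correct. For cases (1)--(3) you match the paper exactly: $\HHH^2(G;\RR)=0$ together with Theorem~\ref{easy cor}. For (4) and (5) the survey only cites \cite{K2M3}, so your self-contained arguments go beyond what the paper itself supplies.

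For case (5) the Nomizu computation is fine; a more elementary route is the five-term exact sequence for the central extension $1\to\Lambda^2\ZZ^n\to N_n\to\ZZ^n\to 1$. Since $N_n^{\ab}=\ZZ^n$, the map $\HHH^1(\ZZ^n;\RR)\to\HHH^1(N_n;\RR)$ is an isomorphism, so the transgression $\HHH^1(\Lambda^2\ZZ^n;\RR)\hookrightarrow\HHH^2(\ZZ^n;\RR)$ is injective between spaces of the same dimension $\binom{n}{2}$, hence surjective, forcing $\HHH^2(\ZZ^n;\RR)\to\HHH^2(N_n;\RR)$ to vanish. This avoids Malcev completion entirely.

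For case (4) your argument is valid but the case split on the linking number is unnecessary. With $M=S^3\setminus\nu(L)$, the restriction $\HHH^2(M;\RR)\to\HHH^2(\partial M;\RR)$ is injective: by Lefschetz duality this amounts to surjectivity of $\HHH_1(\partial M;\RR)\to\HHH_1(M;\RR)$, which holds because meridians generate. Since each $P_i\cong\ZZ^2$ is amenable, every class in $\Im(c_G^2)$ restricts to zero on all $P_i$, hence $\Im(c_G^2)=0$ outright and the intersection with $\Im(p^*)$ is trivially zero. Your route instead identifies $\Im(p^*)$ via the cup-product/linking-number formula; that formula is correct (and in fact follows from the injectivity just stated combined with your computation of the restriction to $P_1$), but it is more than you need.
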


Items (1)-(3) follow from $\HHH^2(G)=0$ and Theorem \ref{easy cor}.
For (4) and (5), see \cite[Corollary 4.12]{K2M3} and \cite[Corollary 4.16]{K2M3}, respectively.

In the setting of Theorem \ref{various_extendable}, we conclude that $\scl_{G}$ and $\scl_{G,N}$ are equivalent on $[G,N]$. In fact, they coincide on $[G,N]$ by (3) of Theorem \ref{equiv thm ex}.

In the case where $G$ is the braid group $B_n$ ($n\geq 3$), we study the comparison problem between $\scl_{G}(x)$ and $\scl_{G,N} (x)$ for several interesting normal subgroups $N$
(for example, $[B_n,B_n]$, $P_n$, $[B_n,P_n]=P_n\cap[B_n,B_n]$ and $[P_n,P_n]$, where $P_n$ is the pure braid group).

\begin{thm}[\cite{KKMM1}, \cite{K2M3}]\label{survey of braid}
Let $G=B_n$ \textup{(}$n \geq 3$\textup{)}.
\begin{enumerate}[$(1)$]
\item If $N$ is $P_n$ or $[B_n,P_n]=P_n\cap[B_n,B_n]$ or $[P_n,P_n]$, then $\scl_\hG(x) \le \scl_{\hG, \bG}(x) \le 2 \cdot \scl_{\hG}(x)$ for all $x \in [\hG, \bG]$.
Moreover, if $n\leq4$, $\scl_\hG(x) = \scl_{\hG, \bG}(x)$ for all $x \in [\hG, \bG]$.
\item If $N=[B_n,B_n]$, then
$\scl_\hG(x) = \scl_{\hG, \bG}(x)$
for all $x \in [\hG, \bG]$.
\end{enumerate}
\end{thm}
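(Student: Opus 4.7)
The plan is to derive both parts uniformly from Theorems \ref{easy cor} and \ref{equiv thm ex} using two ingredients: (a) Arnold's classical computation of the integral cohomology of $B_n$, which implies $\HHH^2(B_n; \RR) = 0$ since $\HHH^i(B_n; \ZZ)$ is torsion for every $i \ge 2$; and (b) the identification of the quotient $\Gamma = B_n/N$ in each of the four cases under consideration. For $N = [B_n, B_n]$, the exponent-sum map identifies $\Gamma$ with $\ZZ$. For $N = P_n$, $\Gamma = S_n$ is finite. For $N = [B_n, P_n] = P_n \cap [B_n, B_n]$, the double-quotient homomorphism $B_n \to S_n \times \ZZ$ has kernel $N$ and image the index-$2$ subgroup $\{(g, k) \in S_n \times \ZZ : \mathrm{sgn}(g) = (-1)^k\}$, so $\Gamma$ is virtually cyclic. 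For $N = [P_n, P_n]$, the short exact sequence $1 \to P_n/[P_n, P_n] \to \Gamma \to S_n \to 1$ with $P_n/[P_n, P_n] \cong \ZZ^{n(n-1)/2}$ exhibits $\Gamma$ as virtually abelian. In every case $\Gamma$ is amenable, hence boundedly acyclic by Theorem \ref{thm=bdd_acyc} (1); in particular it is boundedly $3$-acyclic.

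With both ingredients in hand, Theorem \ref{easy cor} yields
\[ \WWW(B_n, N) \cong \Im(p^*) \cap \Im(c^2_{B_n}) \subseteq \HHH^2(B_n; \RR) = 0 \]
in all four cases. Since $\Gamma$ is amenable, Theorem \ref{equiv thm ex} (2) delivers the factor-$2$ bound $\scl_{B_n}(x) \le \scl_{B_n, N}(x) \le 2 \scl_{B_n}(x)$ in part (1). For part (2), $\Gamma \cong \ZZ$ is abelian (hence solvable), so Theorem \ref{equiv thm ex} (3) gives the equality $\scl_{B_n} = \scl_{B_n, N}$ for all $n \ge 3$. For part (1) with $n \le 4$, the same Theorem \ref{equiv thm ex} (3) applies once one checks that $\Gamma$ is solvable in each case: $S_n$ is solvable iff $n \le 4$, the index-$2$ subgroup of $S_n \times \ZZ$ is solvable iff $S_n$ is, and any extension of $S_n$ by an abelian group is solvable iff $S_n$ is. Thus solvability holds for $n \le 4$ in all three cases, yielding the asserted equality.

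The proposal has essentially no serious obstacle once the two ingredients are isolated: Arnold's vanishing is classical, and the four identifications of $\Gamma$ together with their virtual abelianness and solvability are elementary. The most delicate verification is the $[P_n, P_n]$ case, where one must correctly recognize $\Gamma$ as an extension of $S_n$ by $\ZZ^{n(n-1)/2}$ and track solvability through this extension. Beyond that, the entire conclusion is a direct consequence of the general machinery (Theorems \ref{easy cor} and \ref{equiv thm ex}) developed earlier in the survey.
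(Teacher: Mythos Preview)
Your proof is correct and follows essentially the same route as the paper: use $\HHH^2(B_n;\RR)=0$ together with amenability of $\Gamma$ to get $\WWW(B_n,N)=0$ via Theorem~\ref{easy cor}, then apply Theorem~\ref{equiv thm ex}~(2) for the factor-$2$ bound and Theorem~\ref{equiv thm ex}~(3) for the equalities. Your explicit identification of each quotient $\Gamma$ and the verification of solvability for $n\le 4$ simply spells out what the paper summarizes in one line (``note that the symmetric group $\mathfrak{S}_n$ is solvable for $n\le 4$'').
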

\begin{proof}
It is known that $\HHH^2(G)=0$ (see \cite{AL} for example) and $\Gamma=G/N$ is amenable.
Hence Theorem \ref{easy cor} implies that $\WWW(G,N)=0$ and thus the former of (1) follows from (2) of Theorem \ref{equiv thm ex}.
The latter of (1) and (2) follow from (3) of Theorem \ref{equiv thm ex} (note that the symmetric group $\mathfrak{S}_n$ is solvable for $n \leq 4$).
\end{proof}

Note that ``Moreover'' part of (1) is not written in the earlier papers.
In the setting of (2) of Theorem \ref{survey of braid}, furthermore $\cl_\hG(x) = \cl_{\hG, \bG}(x)$ holds for all $x \in [\hG, \bG]$ (see Example \ref{eg:braid}).

We also consider other groups related to braid groups.

Let $C$ be a Cantor set in $\RR^2$ and $\Mod(\RR^2 \setminus C )$ the mapping class group of the plane minus a Cantor set.
Such a mapping class group of an infinite type surface is called a \emph{big mapping class group}. Big mapping class groups have been actively studied in recent years (see \cite{bigMCG}).
Let $\PMod(\RR^2 \setminus C )$ be the pure mapping class group, \textit{i.e.}, the kernel of the homomorphism $p \colon \Mod(\RR^2 \setminus C ) \to \mathrm{Homeo}(C)$ induced by the action of $\Mod(\RR^2 \setminus C )$ on $C$.

Let $bV$ be the \textit{braided Thompson group} introduced by Brin \cite{Brin} and Dehornoy \cite{Dehornoy}.
Let $bP$ be the kernel of the projection $p \colon bV \to V$ to the Thompson group $V$.
Note that the groups $bV$, $bP$, and $V$ are obtained by a certain limit of families $(B_n)_{n \in \NN}$, $(P_n)_{n \in \NN}$ and $(\mathfrak{S}_n)_{n \in \NN}$, respectively (see \cite{Zaremsky}).

Building on the work \cite{Brown92}, \cite{SerTsu} and \cite{Andritsch}, we obtain the following result, which have not been mentioned in previous papers.

\begin{thm}\label{cantor}
If $(G,N)=(\Mod(\RR^2 \setminus C ),\PMod(\RR^2 \setminus C ))$ or $(bV,bP)$, then $\VVV(G,N)=0$.
\end{thm}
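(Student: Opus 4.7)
The plan is to apply Theorem \ref{calculation of dim}, which states that $\dim \VVV(G,N) \le \dim \HHH^2(\Gamma)$ whenever $\Gamma = G/N$ is boundedly $3$-acyclic. Hence for each of the two cases it suffices to (i) identify the quotient $\Gamma$ and (ii) verify that $\Gamma$ is boundedly $3$-acyclic and that $\HHH^2(\Gamma;\RR) = 0$.

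For case (1), by the very definition of $\PMod(\RR^2\setminus C)$ as the kernel of the natural homomorphism $p \colon \Mod(\RR^2\setminus C) \to \Homeo(C)$ induced by the action on the space of ends, the quotient $\Gamma$ is the image of $p$, which in the big mapping class group setting is known to be all of $\Homeo(C)$. The acyclicity of $\Homeo(C)$ due to Sergiescu--Tsuboi \cite{SerTsu} immediately gives $\HHH^2(\Gamma) = 0$, and the bounded acyclicity of $\Homeo(C)$ due to Andritsch \cite{Andritsch} supplies bounded $3$-acyclicity.

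For case (2), by the definition of $bP$ we have $\Gamma = bV/bP \cong V$, the Thompson group. Brown \cite{Brown92} proved that $V$ is $\QQ$-acyclic (later strengthened to $\ZZ$-acyclicity by Szymik--Wahl \cite{SzWa}), so in particular $\HHH^2(V;\RR) = 0$; Fournier-Facio--Lodha--Zaremsky \cite{FFLZ} proved that $V$ is boundedly acyclic, hence boundedly $3$-acyclic.

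Combining these facts, Theorem \ref{calculation of dim} yields $\dim \VVV(G,N) \le 0$ in both cases, and therefore $\VVV(G,N) = 0$. The main non-routine content is the collection of deep results already cited on (bounded) acyclicity of $\Homeo(C)$ and of $V$; once those are imported, the argument is purely formal, and the only check of substance is the identification of $\Gamma$ in case (1), which is classical for big mapping class groups.
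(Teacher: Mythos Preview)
Your proposal is correct and follows essentially the same approach as the paper: identify the quotient $\Gamma$, invoke the known (bounded) acyclicity results for $\Homeo(C)$ and $V$, and then apply Theorem~\ref{calculation of dim}. The only cosmetic difference is a citation: the paper attributes the bounded acyclicity of $V$ to \cite{Andritsch} rather than to \cite{FFLZ}.
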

\begin{proof}
   It is known that $\textrm{Homeo}(C)$ is acyclic \cite{SerTsu} and boundedly acyclic \cite{Andritsch}.
   It is also known that $V$ is $\QQ$-acyclic \cite{Brown92} (moreover $\ZZ$-acyclic \cite{SzWa}) and boundedly acyclic \cite{Andritsch}.
   Thus we obtain $\VVV(G,N)=0$ in both cases by Theorem \ref{calculation of dim}.
\end{proof}

Note that the space $\QQQ(G)$ (and hence $\QQQ(N)^G$) is infinite-dimensional in both cases \cite{JBavard}, \cite{FFLZ}.

In \cite{{K2M3}}, the authors provided several computations
of $\dim \VVV(G,N)$ and $\dim \WWW(G,N)$.
We list them at the end of this section.
\begin{thm}\label{raretu of keisan}
\begin{enumerate}[$(1)$]
    \item \textup{(\cite[Theorem 4.5]{K2M3})}
    For $n \ge 1$, set $\hG = F_n$ and $\bG = [F_n, F_n]$. Then
  \[ \dim\VVV(G,N) = \frac{n(n-1)}{2} \quad \textrm{and} \quad  \dim \WWW(G,N) = 0.\]
  \item \textup{(\cite[Theorem 1.1]{K2M3})}
  Let $\genus$ be an integer at least two, $\hG = \pi_1(\Sigma_\genus)$ the surface group with genus $\genus$, and $\bG$ the commutator subgroup $[\pi_1(\Sigma_\genus),\pi_1(\Sigma_\genus)]$ of $\pi_1(\Sigma_\genus)$. Then
  \[ \dim \VVV(G,N) = \genus (2\genus-1) \quad \textrm{and} \quad \dim \WWW (G,N) = 1.\]
  \item \textup{(\cite[a special case of Theorem 1.2]{K2M3})}
  Let $\genus$ be an integer at least two and $f \colon \Sigma_{\genus} \to \Sigma_{\genus}$ an orientation preserving diffeomorphism whose isotopy class $[f]$ is contained in the Torelli group $\II(\Sigma_{\genus})$ and pseudo-Anosov. If $\hG$ is the fundamental group of the mapping torus $T_f$ and $\bG$ is the commutator subgroup $[\hG,\hG]$ of $\hG$, then
\[ \dim \VVV(G,N) = 2\genus + \binom{2\genus}{2} \quad \textrm{and} \quad \dim \WWW(G,N) = 2\genus + 1.\]
\item \textup{(\cite[Corollary 4.14]{K2M3})}
Let $E \to \Sigma_{\genus}$ be a non-trivial circle bundle over a closed oriented surface of genus $\genus \geq 2$.
  For the fundamental group $\hG = \pi_1(E)$ and its normal subgroup $\bG = [G, G]$, we have
  \[
    \dim \VVV(G,N) = \genus(2\genus - 1) \quad \textrm{and} \quad \dim \WWW(G,N) = 0.
  \]
  \item \textup{(\cite[Theorem 4.18]{K2M3})}
Let $n$ and $k$ be integers at least two, and $r$ an element of $[F_n, F_n]$ such that there exists $f_0 \in \HHH^1([F_n,F_n])^{F_n}$ with $f_0(r) \ne 0$, equivalently, $r\in [F_n,F_n]\setminus [F_n, [F_n,F_n]]$.
Set $\hG = F_n / \llangle r^k \rrangle$ and $\bG = [\hG,\hG]$.
Then
\[\dim \VVV(G,N) = \frac{n(n - 1)}{2} \quad \textrm{and} \quad \dim \WWW(G,N) = 1.\]
\end{enumerate}
\end{thm}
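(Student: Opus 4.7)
The unifying strategy for all five computations is to combine the five-term exact sequence of Theorem \ref{thm:5-term} with Theorems \ref{calculation of dim} and \ref{easy cor}. In every case the abelian quotient $\Gamma = G/N$ will turn out to be amenable (indeed an abelian group, possibly with torsion), so $\HHH_{/b}^2(\Gamma) \cong \HHH^2(\Gamma)$ and the five-term sequence collapses to an isomorphism
\[
  \VVV(G,N) \;\cong\; \ker\bigl(p^{\ast} \colon \HHH^2(\Gamma) \to \HHH_{/b}^2(G)\bigr),
\]
while Theorem \ref{easy cor} yields $\WWW(G,N) \cong \Im(p^{\ast}) \cap \Im(c^2_G)$ inside $\HHH^2(G)$. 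Thus for each pair $(G,N)$ it suffices to (a) identify $\Gamma$ and compute $\dim\HHH^2(\Gamma)$, (b) decide whether the comparison map $c^2_G$ is surjective (which gives $\HHH_{/b}^2(G) = 0$), and (c) analyze the map $p^{\ast}$ on cohomology.

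For cases (1), (2), (3) and (5), the group $G$ is Gromov-hyperbolic, so Mineyev's surjectivity of $c^2_G$ makes $\HHH_{/b}^2(G) = 0$ and the equality $\dim \VVV(G,N) = \dim \HHH^2(\Gamma)$ is immediate from Theorem \ref{calculation of dim}. In case (1) we have $\Gamma = \ZZ^n$, giving $\binom{n}{2}$; $\WWW = 0$ follows since $\HHH^2(F_n) = 0$. In case (2), $\Gamma = \ZZ^{2l}$, giving $\binom{2l}{2} = l(2l-1)$; since $\HHH^2(\pi_1(\Sigma_l)) = \RR$ is generated by the cup product of dual basis elements pulled back from $\ZZ^{2l}$, $p^{\ast}$ is surjective, whence $\WWW \cong \RR$. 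For case (3) one first invokes Thurston's hyperbolization (since $f$ is pseudo-Anosov) to ensure $G = \pi_1(T_f)$ is hyperbolic; the Torelli hypothesis trivializes the monodromy action on $\HHH^{\ast}(\Sigma_l;\RR)$, so the Lyndon--Hochschild--Serre sequence for $1 \to \pi_1(\Sigma_l) \to G \to \ZZ \to 1$ degenerates, giving $\Gamma = \HHH_1(G;\ZZ) = \ZZ^{2l+1}$ and $\HHH^2(G) = \RR^{2l+1}$. Surjectivity of $p^{\ast}$ is then read off the bundle structure: the $E_{\infty}$-classes in $\HHH^{0,2}$ and $\HHH^{1,1}$ are realized, respectively, by pulled-back cup products $\alpha_i \smile \alpha_{i+l}$ and $\alpha_i \smile \beta$ from $\HHH^{\ast}(\ZZ^{2l+1})$. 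Case (5) uses that one-relator groups with a proper-power relator are hyperbolic (Newman/Brick/Juhász), so $\dim \VVV = \binom{n}{2}$; the calculation $\dim \HHH^2(G;\RR) = 1$ and surjectivity of $p^{\ast}$ onto it come from a Hopf-formula-type computation, which is where the hypothesis $r \in [F_n,F_n] \setminus [F_n,[F_n,F_n]]$ is used to ensure the relator contributes a single non-trivial class to $\HHH^2(G;\RR)$.

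Case (4) is conceptually the most interesting, because $G = \pi_1(E)$ has an infinite cyclic center (from the $S^1$-fiber) and is therefore \emph{not} Gromov-hyperbolic, so the equality half of Theorem \ref{calculation of dim} is unavailable. Here the abelianization is $\Gamma = \ZZ^{2l} \oplus \ZZ/e\ZZ$ (with $e \neq 0$ the Euler number), amenable, and with $\HHH^2(\Gamma;\RR) = \HHH^2(\ZZ^{2l};\RR) = \RR^{l(2l-1)}$. The crucial observation is that $p^{\ast} \colon \HHH^2(\Gamma) \to \HHH^2(G)$ factors through $\HHH^2(\pi_1(\Sigma_l)) = \RR$, and by the Gysin sequence of the non-trivial circle bundle $S^1 \to E \to \Sigma_l$,
\[
  \HHH^0(\Sigma_l) \xrightarrow{\smile e} \HHH^2(\Sigma_l) \longrightarrow \HHH^2(E)
\]
the first map is an isomorphism (since $e \neq 0$), so the pullback $\HHH^2(\Sigma_l;\RR) \to \HHH^2(E;\RR)$ vanishes. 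Consequently $p^{\ast} = 0$ in $\HHH^2(G;\RR)$ and a fortiori in $\HHH_{/b}^2(G)$, yielding $\VVV(G,N) = \HHH^2(\Gamma) = \RR^{l(2l-1)}$ and $\WWW(G,N) = \Im(p^{\ast}) \cap \Im(c^2_G) = 0$.

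The main obstacles I expect are (i) verifying the cup-product surjectivity claim in case (3) cleanly (this is routine but requires keeping track of the Lyndon--Hochschild--Serre identification), and (ii) in case (5), extracting $\dim \HHH^2(G;\RR) = 1$ from the one-relator presentation with a proper-power relator lying deep in the lower central series, which is not as standard as the other ingredients and is really the content of Theorem 4.18 of \cite{K2M3}.
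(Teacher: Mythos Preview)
The survey does not prove this theorem; each of the five items is simply quoted from \cite{K2M3} with a pointer to the relevant result there, and no argument is given in the present paper. So there is no in-paper proof to compare against.

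That said, your outline is the natural one and matches how the cited paper proceeds: feed each pair into the five-term sequence of Theorem~\ref{thm:5-term}, use amenability of $\Gamma$ to identify $\HHH^2_{/b}(\Gamma)$ with $\HHH^2(\Gamma)$, and then invoke either hyperbolicity of $G$ (cases (1), (2), (3), (5)) for the equality in Theorem~\ref{calculation of dim}, or direct vanishing of $p^*$ (case (4)). A couple of small remarks. In case~(4) you should make explicit that once the ordinary $p^*\colon \HHH^2(\Gamma)\to\HHH^2(G)$ vanishes, commutativity of diagram~\eqref{diagram_coh_qm_rel} together with the isomorphism $\xi_4$ forces the map $p^*\colon \HHH^2_{/b}(\Gamma)\to\HHH^2_{/b}(G)$ to vanish as well; this is what yields $\dim\VVV=\dim\HHH^2(\Gamma)$ without hyperbolicity. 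In case~(3) your surjectivity claim for $p^*$ is correct, and the cleanest way to phrase it is via the Wang sequence: with trivial monodromy on $\HHH^*(\Sigma_l;\RR)$ the connecting map $\HHH^{k-1}(\Sigma_l)\hookrightarrow\HHH^k(T_f)$ is cup product with the class pulled back from $\HHH^1(S^1)$, so all of $\HHH^2(T_f;\RR)$ is spanned by degree-one cup products. The two places you flag as obstacles are precisely the non-formal steps; in particular, for case~(5) what is really needed is $\dim\Im(p^*)=1$, which via the ordinary five-term sequence amounts to $\dim\HHH^1(N)^G=\binom{n}{2}-1$, and this is exactly where the hypothesis $r\notin[F_n,[F_n,F_n]]$ enters and is the substance of \cite[Theorem~4.18]{K2M3}.
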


Finding examples of $(G,N)$ whose quotient $\Gamma$ is not boundedly $3$-acyclic with non-zero $\WWW(G,N)$ is widely open.
Such an example may lead to finding an interesting (especially in the context of Lex groups) $3$-dimensional bounded cohomology class of $\Gamma$.
For example, let us consider the mapping class group $\Mod(\Sigma_l)$ of $\Sigma_l$ of genus $l \geq 2$.
Let $\Mod(\Sigma_l, \ast)$ be the mapping class group of $\Sigma_l$ with once-marked point $\ast$.
These groups give rise to the well-known Birman exact sequence
\[
  1 \to \pi_1(\Sigma_l) \to \Mod(\Sigma_l, \ast) \to \Mod(\Sigma_l) \to 1.
\]
Applying (\ref{diagram_coh_qm_rel}) to this exact sequence, we obtain the following commutative diagram:

\begin{align*}
  \xymatrix{
  & & \HHH_b^2(\mathcal{M}_l) \ar[r] \ar[d] & \HHH_b^2(\mathcal{M}_{l,\ast}) \ar[d] \\
  \HHH^1(\mathcal{M}_{l,\ast}) \ar[r] \ar[d] & \HHH^1(\pi_1(\Sigma_l))^{\mathcal{M}_{l,\ast}} \ar[r] \ar[d] & \HHH^2(\mathcal{M}_l) \ar[r] \ar[d]^-{\tae_4} & \HHH^2(\mathcal{M}_{l,\ast}) \ar[d] \\
  \QQQ(\mathcal{M}_{l,\ast}) \ar[r] & \QQQ(\pi_1(\Sigma_l))^{\mathcal{M}_{l,\ast}} \ar[r]^-{\tau_{/b}} & \HHH_{/b}^2(\mathcal{M}_l) \ar[r] \ar[d] & \HHH_{/b}^2(\mathcal{M}_{l,\ast}) \ar[d] \\
  & & \HHH_b^3(\mathcal{M}_l) \ar[r]^-{p^*} & \HHH_b^3(\mathcal{M}_{l,\ast}).
  }
\end{align*}
Here, in this diagram we write $\mathcal{M}_l$ and $ \mathcal{M}_{l,\ast}$ for $\Mod(\Sigma_l)$ and $\Mod(\Sigma_l,\ast)$, respectively.

The second cohomology group $\HHH^2(\Mod(\Sigma_l))$ is trivial for $l = 2$ and isomorphic to $\ZZ$ for $l \geq 3$ (see \cite{MR1892804}).
Moreover, for $l \geq 3$, the generator is given by the signature cocycle, which is a bounded cocycle.
Hence, the map $\tae_4 \colon \HHH^2(\Mod(\Sigma_l)) \to \HHH_{/b}^2(\Mod(\Sigma_l))$ is the zero map for $l \geq 2$.
This implies that the space $\WWW(\Mod(\Sigma_l,\ast), \pi_1(\Sigma_l))$ injects to $\Ker \big( p^* \colon \HHH_b^3(\Mod(\Sigma_l)) \to \HHH_b^3(\Mod(\Sigma_l,\ast)) \big)$.
(Note that the space $\HHH^1(\pi_1(\Sigma_l))^{\Mod(\Sigma_l,\ast)}$ vanishes by the change of coordinates principle (see \cite{farb_margalit12}), and thus we have $\WWW(\Mod(\Sigma_l,\ast), \pi_1(\Sigma_l)) = \VVV(\Mod(\Sigma_l,\ast), \pi_1(\Sigma_l))$.)
Hence if there exists a non-zero element of $\WWW(\Mod(\Sigma_l,\ast), \pi_1(\Sigma_l))$, the group $\Mod(\Sigma_l)$ turns out to be a non-Lex group.

\begin{openproblem}\label{prob:nonlex}
 For each fixed $l \geq 2$, is  $\WWW(\Mod(\Sigma_l,\ast), \pi_1(\Sigma_l))$ the zero space?
\end{openproblem}


\section{Comparison problem between commutator length and mixed commutator length}\label{cl comparison sec}

In this section, we treat the equivalence and coincidence problems between $\cl_\hG$ and $\cl_{\hG,\bG}$. We first mention the equivalence problem, following \cite[Section~7]{KKMM1}.

\begin{thm}[{\cite[Theorem~7.1]{KKMM1}, \cite[Theorem~1.5]{K2M32}}]
Assume that short exact sequence $(\star)$ splits.
Then for each $x \in [\hG,\bG]$, the following inequality holds:
\[ \cl_{\hG,\bG}(x) \le 3 \cdot \cl_\hG(x).\]
Moreover, if $N$ is abelian, then
\[ \cl_{\hG, \bG}(x) \le 2 \cdot \cl_\hG(x).\]
\end{thm}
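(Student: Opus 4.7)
Since $(\star)$ splits, I identify $G$ with $N \rtimes H$ where $H = s(\ppi) \cong \ppi$, and write every $g \in G$ uniquely as $g = a \cdot \alpha$ with $a \in N$, $\alpha \in H$. Given $x \in [G,N]$ with $\cl_G(x) = k$, I fix a decomposition $x = c_1 \cdots c_k$ into single commutators $c_i = [g_i, h_i]$, set $g_i = a_i \alpha_i$ and $h_i = b_i \beta_i$, and put $\tau_i := [\alpha_i, \beta_i] \in H$. The plan is to prove, as the main lemma, that each commutator factors as $c_i = n_i \cdot \tau_i$ with $n_i \in N$ a product of at most three mixed commutators in general, and at most two when $N$ is abelian.

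Granting the lemma, the theorem follows by sliding all the $\tau_i$'s to the right using $\tau_i \cdot n = {}^{\tau_i}n \cdot \tau_i$:
\[ x = \prod_{i=1}^{k} n_i \tau_i = \Bigl( \prod_{i=1}^{k} {}^{\tau_1 \cdots \tau_{i-1}} n_i \Bigr) \cdot (\tau_1 \cdots \tau_k). \]
Projecting $G \to H$ along $N$, the element $x \in N$ has trivial $H$-component, which forces $\tau_1 \cdots \tau_k = 1$ in $H$. Since conjugation by any element of $G$ sends mixed commutators to mixed commutators (as $N$ is normal), each ${}^{\tau_1 \cdots \tau_{i-1}} n_i$ is again a product of the same number of mixed commutators as $n_i$, giving $\cl_{G,N}(x) \le 3k$, improving to $\le 2k$ in the abelian case.

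To prove the lemma in the general case, I use the commutator identities $[xy, z] = {}^x[y,z] \cdot [x,z]$ and $[x, yz] = [x,y] \cdot {}^y[x,z]$, applied via the alternative splitting $h_i = \beta_i \cdot (\beta_i^{-1} b_i \beta_i)$, to obtain
\[ [g_i, h_i] = {}^{a_i}[\alpha_i, \beta_i] \cdot [a_i, \beta_i] \cdot [{}^{\beta_i} g_i, b_i]. \]
The last two factors are mixed commutators, and ${}^{a_i}\tau_i = [a_i, \tau_i] \cdot \tau_i$ lets me pull $\tau_i$ past the remaining factors, yielding
\[ c_i = [a_i, \tau_i] \cdot {}^{\tau_i}[a_i, \beta_i] \cdot {}^{\tau_i}[{}^{\beta_i} g_i, b_i] \cdot \tau_i, \]
exhibiting $n_i$ as a product of three mixed commutators. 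In the abelian case I instead compute directly in $N \rtimes H$ that $c_i = (n_i, \tau_i)$ with $n_i = a_i \cdot {}^{\alpha_i} b_i \cdot {}^{\alpha_i \beta_i \alpha_i^{-1}} a_i^{-1} \cdot {}^{\tau_i} b_i^{-1}$, and use commutativity to group the $a$-factors and $b$-factors separately:
\[ n_i = \bigl( a_i \cdot {}^{\alpha_i \beta_i \alpha_i^{-1}} a_i^{-1} \bigr) \cdot \bigl( {}^{\alpha_i} b_i \cdot {}^{\tau_i} b_i^{-1} \bigr) = [a_i, \alpha_i \beta_i \alpha_i^{-1}] \cdot {}^{\alpha_i}[b_i, \alpha_i^{-1} \tau_i], \]
a product of only two mixed commutators.

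The main obstacle is the sharp count in the lemma. A naive symmetric expansion of $[a\alpha, b\beta]$ always introduces a superfluous $[a,b]$ term, so producing only three mixed commutators requires choosing the asymmetric identity above; and obtaining the improved bound of two in the abelian case rests on the explicit semidirect-product computation together with the freedom to reorder factors in $N$, neither of which is available in general.
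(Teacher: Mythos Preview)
Your proof is correct. The survey itself does not supply a proof of this statement; it only cites the external references \cite{KKMM1} and \cite{K2M32}. Your argument --- exploiting the semidirect product decomposition $G = N \rtimes H$, writing each $G$-commutator as $c_i = n_i \tau_i$ with $\tau_i \in H$ and $n_i$ a short product of mixed commutators, then collapsing the $\tau$-tail via the projection $G \to H$ --- is the natural approach and matches the strategy of the cited works. The commutator identities and the semidirect-product computation are all checked correctly, and the count of three (respectively two) mixed commutators per $c_i$ is sharp for this method.
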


\begin{thm}[{\cite[Theorem~7.2]{KKMM1}}] \label{thm:finite_index_cl}
Assume that the quotient group $\Gamma = \hG/\bG$ is a finite group. Then there exists $C \ge 1$ such that for each $x \in [\hG,\bG]$, the following inequality holds:
\[ \cl_{\hG,\bG}(x) \le C \cdot \cl_\hG(x).\]
\end{thm}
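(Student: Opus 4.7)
The plan is to take a minimal decomposition $x = \prod_{i=1}^n [a_i, b_i]$ with $n = \cl_G(x)$, and rewrite it as a product of at most $C \cdot n$ mixed commutators, where $C$ depends only on $|\Gamma|$. The argument has three ingredients: expand each commutator in terms of a coset transversal, collect the ``residues'', and then bound the residue word $E$ by two successive pigeonhole arguments that exploit finiteness of $\Gamma$.

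\emph{Commutator expansion and collection.}
Fix a transversal $T \subset G$ for $N$ (with $|T| = |\Gamma|$), and write every $g \in G$ uniquely as $g = t_g \cdot n_g$ with $t_g \in T$ and $n_g \in N$. Repeated use of $[xy,z] = x[y,z]x^{-1}[x,z]$ and $[x,yz] = [x,y] \cdot y[x,z]y^{-1}$ gives, for every $g, h \in G$, an identity of the form
\[
  [g, h] \;=\; m^{(1)}\cdot m^{(2)} \cdot m^{(3)} \cdot [t_g, t_h],
\]
where each $m^{(j)}$ is a $G$-conjugate of a commutator with at least one entry in $N$, hence a mixed commutator (using that $N$ is normal, so $G$-conjugates of mixed commutators remain mixed). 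Applying this factor by factor in $x = \prod_i [a_i, b_i]$ and pushing each residue $[t_{a_i}, t_{b_i}]$ past all subsequent factors via $G$-conjugation, one obtains
\[
  x \;=\; P\cdot E, \qquad P = \text{product of at most } 3n \text{ mixed commutators},
\]
with $E = e_1 e_2 \cdots e_n$ a word of length $n$ in the finite set $\mathcal{E} := \{[t, s] : t, s \in T\}$. Since $x, P \in [G, N]$, also $E \in [G, N]$. Everything now reduces to proving a uniform bound $\cl_{G,N}(E) \le C_0 \cdot n$ for some $C_0 = C_0(\Gamma)$.

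\emph{Two-stage pigeonhole for $E$.}
The partial products $\bar p_j := \overline{e_1 \cdots e_j} \in \Gamma$ satisfy $\bar p_0 = \bar p_n = 1$ and take values in a set of size $|\Gamma|$. A first pigeonhole allows us to extract, within every window of $|\Gamma| + 1$ consecutive indices, a sub-block $e_{a+1}\cdots e_b \in N$ of $\mathcal{E}$-length at most $|\Gamma|$; iterating (and using the identity $U E_{\mathrm{sub}} V = (U E_{\mathrm{sub}} U^{-1}) \cdot UV$, which keeps $U E_{\mathrm{sub}} U^{-1}$ in $N$ by normality of $N$) writes $E = y_1 y_2 \cdots y_l$ with $l \le n$ and each $y_\alpha$ in a fixed finite subset $\mathcal{F} \subset N \cap [G,G]$ depending only on $\Gamma$. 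The critical observation is that $(N \cap [G, G])/[G, N]$ is \emph{finite} when $\Gamma$ is finite: the five-term exact sequence
\[
  \HHH_2(G) \to \HHH_2(\Gamma) \to N/[G, N] \to G^{\ab} \to \Gamma^{\ab} \to 0
\]
identifies the kernel of $N/[G,N] \to G^{\ab}$ with a quotient of the Schur multiplier $\HHH_2(\Gamma)$, which is finite whenever $\Gamma$ is. A second pigeonhole, this time on the partial images of $\bar y_\alpha$ in the finite group $(N \cap [G,G])/[G,N]$, then lets us group consecutive $y_\alpha$'s into ``super-blocks'' of at most $|\HHH_2(\Gamma)| + 1$ factors apiece, each super-block landing in $[G, N]$ and drawn from a finite subset $\mathcal{F}' \subset [G,N]$ determined by $\Gamma$. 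Setting $C_0 := \max_{y \in \mathcal{F}'} \cl_{G,N}(y) < \infty$ yields $\cl_{G,N}(E) \le C_0 \cdot l \le C_0 \cdot n$, which combined with the decomposition $x = P\cdot E$ gives the desired bound $\cl_{G,N}(x) \le (3 + C_0) \cl_G(x)$.

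\emph{Main obstacle.}
The chief technical point is the bookkeeping in the iterative extractions: one must ensure that at every step the remaining ``tail'' $UV$ is still a word in $\mathcal{E}$ (so that the next pigeonhole applies) and that the extracted super-blocks genuinely sit inside $[G,N]$ rather than merely $N \cap [G,G]$. The conjugation identity $U E V = (U E U^{-1})\cdot UV$ together with normality of both $N$ and $[G,N]$ in $G$ is what makes the iteration close up, while finiteness of the Schur multiplier $\HHH_2(\Gamma)$ (a standard consequence of $|\Gamma| < \infty$) ensures the second pigeonhole terminates in uniformly bounded depth, which is precisely what allows the $\cl$-version of the equivalence to be concluded without passing through the stable regime.
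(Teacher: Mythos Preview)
The paper is a survey and does not include a proof of this theorem; it merely cites \cite[Theorem~7.2]{KKMM1}. So there is no in-paper proof to compare against.

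Your argument is correct. The key point I initially worried about --- that the conjugating prefixes $U$ in the iterative extraction might have unbounded length, so that the blocks $y_\alpha = U E_{\mathrm{sub}} U^{-1}$ would not lie in a finite set --- is in fact not an issue: at each step of the first pigeonhole you are working with the \emph{current} (shortened) $\mathcal{E}$-word, and the repeat occurs among its first $|\Gamma|+1$ partial products, so both the conjugator $U$ and the content $E_{\mathrm{sub}}$ are $\mathcal{E}$-words of length at most $|\Gamma|$. The same reasoning applies at the second pigeonhole stage with the alphabet $\mathcal{F}$ in place of $\mathcal{E}$. The use of the integral five-term sequence to see that $(N\cap[G,G])/[G,N]$ is a quotient of the finite Schur multiplier $\HHH_2(\Gamma;\ZZ)$ is exactly the right tool.

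Two minor remarks on exposition. First, your commutator expansion actually gives $[g,h] = m^{(1)} \cdot [t_g,t_h] \cdot m^{(2)}$ with only two mixed factors; pushing $[t_g,t_h]$ past $m^{(2)}$ by conjugation brings it to your stated form with one extra (harmless) factor. Second, the phrase ``depending only on $\Gamma$'' for $\mathcal{F}$ and $\mathcal{F}'$ is imprecise, since these are subsets of $G$ and depend on the chosen transversal $T$; what you mean is that their \emph{cardinalities} are bounded in terms of $|\Gamma|$, and that $C_0$ then depends on the pair $(G,N)$. This does not affect the conclusion, since the theorem only asserts the existence of some constant $C$.
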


These two results imply that if $\Gamma$ is finite or the projection $p \colon \hG \to \Gamma$ has a section homomorphism, then $\cl_\hG$ and $\cl_{\hG,\bG}$ are equivalent. It is not known that  whether the projection $p \colon \hG \to \Gamma$ has a virtual section, then $\cl_\hG$ and $\cl_{\hG,\bG}$ are equivalent. However, under some additional assumption,
we have an equivalence results of $\cl_{G,N}$ and $\cl_G$.

\begin{thm}[{\cite[Theorem~7.16]{KKMM1}}] \label{thm cl equivalence pp}
Let $\bG$ be a normal subgroup of a group $\hG$ and $p \colon \hG \to \Gamma$ be the projection.
If there exists a virtual section $(s,\Lambda)$ of $\ppi$ such that the image of $s$ is contained in the center $Z(\hG)$ for $\hG$,
then $\cl_{\hG,\bG}$ and $\cl_\hG$ are equivalent on $[\hG,\bG]$.
\end{thm}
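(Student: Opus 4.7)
The plan is to introduce the intermediate subgroup $H = p^{-1}(\Lambda)$ and reduce the equivalence on $[G,N]$ to the already-established finite-index case (Theorem \ref{thm:finite_index_cl}) applied to the pair $(G,H)$. Since $[\Gamma:\Lambda] < \infty$, the subgroup $H$ has finite index in $G$, so Theorem \ref{thm:finite_index_cl} provides a constant $C \geq 1$ with $\cl_{G,H}(x) \leq C \cdot \cl_G(x)$ for every $x \in [G,H]$. The crux of the argument is therefore to identify $\cl_{G,H}$ with $\cl_{G,N}$ on $[G,N]$, using the centrality hypothesis $s(\Lambda) \subset Z(G)$.

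First I would verify that every $h \in H$ admits a unique decomposition $h = n \cdot s(\lambda)$ with $n \in N$ and $\lambda \in \Lambda$. Existence follows by setting $\lambda = p(h)$ and $n = h \cdot s(\lambda)^{-1}$ (then $p(n) = 1$, so $n \in N$), while uniqueness comes from $s(\Lambda) \cap N = \{1\}$, which is a consequence of $p \circ s = \mathrm{id}_\Lambda$.

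The key step is to exploit the centrality of $s(\Lambda)$ to collapse $(G,H)$-commutators to $(G,N)$-commutators. For any $g \in G$ and $h = n \cdot s(\lambda) \in H$, using that $s(\lambda)$ commutes with $g$ and with $n$, a direct calculation gives
\[
  [g, h] \;=\; g\, n\, s(\lambda)\, g^{-1}\, s(\lambda)^{-1}\, n^{-1} \;=\; g n g^{-1} n^{-1} \;=\; [g, n].
\]
Thus every $(G,H)$-commutator is itself a $(G,N)$-commutator, and conversely every $(G,N)$-commutator is a $(G,H)$-commutator. In particular $[G,H] = [G,N]$, and any factorization of $x$ as a product of $k$ mixed $(G,H)$-commutators is simultaneously a factorization of $x$ into $k$ mixed $(G,N)$-commutators, so $\cl_{G,H}(x) = \cl_{G,N}(x)$ for every $x \in [G,N]$.

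Combining these two ingredients with the trivial bound $\cl_G \leq \cl_{G,N}$ yields
\[
  \cl_G(x) \;\leq\; \cl_{G,N}(x) \;=\; \cl_{G,H}(x) \;\leq\; C \cdot \cl_G(x)
\]
for every $x \in [G,N]$, which proves the equivalence. There is no serious obstacle in this approach: the centrality of $s(\Lambda)$ makes the passage from $(G,H)$-commutators to $(G,N)$-commutators completely transparent, and the remainder of the argument is an application of Theorem \ref{thm:finite_index_cl} to the finite-index normal subgroup $H \trianglelefteq G$.
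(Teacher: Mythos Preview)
The survey paper does not contain its own proof of this theorem; it simply cites \cite[Theorem~7.16]{KKMM1}. That said, your argument is correct and is the natural reduction to Theorem~\ref{thm:finite_index_cl} (the paper even remarks that Theorem~\ref{thm cl equivalence pp} generalizes Theorem~\ref{thm:finite_index_cl} by taking the trivial subgroup as~$\Lambda$, which strongly suggests this is the intended route).

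One point you assert but do not justify: in the final line you call $H = p^{-1}(\Lambda)$ a \emph{normal} subgroup of $G$, which is exactly what Theorem~\ref{thm:finite_index_cl} requires. This does hold, but it uses the centrality hypothesis: since $s(\Lambda) \subset Z(G)$, for every $g \in G$ and $\lambda \in \Lambda$ we have $g\, s(\lambda)\, g^{-1} = s(\lambda)$, and applying $p$ gives $p(g)\,\lambda\, p(g)^{-1} = \lambda$; surjectivity of $p$ then shows $\Lambda \subset Z(\Gamma)$, so $\Lambda \trianglelefteq \Gamma$ and hence $H \trianglelefteq G$. With that line added, the proof is complete.
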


Note that Theorem \ref{thm cl equivalence pp} is a generalization of Theorem \ref{thm:finite_index_cl} since the trivial subgroup is in the center.
This theorem has the following application.

\begin{prop}[{\cite[Proposition~7.15]{KKMM1}}]\label{pure braid and commutator}
For every positive integer $n$, $\cl_{B_n}$ and $\cl_{B_n, P_n \cap [B_n,B_n]}$ are equivalent on $[B_n,P_n \cap[B_n,B_n]]$.
\end{prop}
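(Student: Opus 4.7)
The plan is to apply Theorem~\ref{thm cl equivalence pp}: it suffices to construct a virtual section of the projection $p \colon B_n \to \Gamma := B_n/(P_n \cap [B_n,B_n])$ whose image is contained in the center $Z(B_n)$. The cases $n \le 2$ are trivial since $[B_n, P_n \cap [B_n,B_n]]$ is then itself trivial, so I will assume $n \ge 3$.

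The first step is to identify $\Gamma$ concretely. Consider the homomorphism
\[
  \pi \colon B_n \to \mathfrak{S}_n \times \ZZ
\]
sending a braid to the pair (induced permutation, exponent sum). Since $P_n$ is the kernel of the permutation map and $[B_n, B_n]$ is the kernel of the exponent-sum abelianization $B_n \to \ZZ$, we have $\ker \pi = P_n \cap [B_n,B_n]$. Hence $\Gamma$ is isomorphic to the image of $\pi$, namely the index-$2$ subgroup
\[
  \{(\sigma, k) \in \mathfrak{S}_n \times \ZZ : \mathrm{sgn}(\sigma) = (-1)^k\}.
\]
In particular, the kernel of the further projection $\Gamma \to \mathfrak{S}_n$ is isomorphic to $\ZZ$ and generated by $(e, 2)$.

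The second step is to exploit the center. The center $Z(B_n)$ is infinite cyclic, generated by the full twist $\Delta^2$; since $\Delta^2 \in P_n$ has exponent sum $n(n-1)$, it maps to $\pi(\Delta^2) = (e, n(n-1)) \in \Gamma$. Set $\Lambda := \langle (e, n(n-1)) \rangle \subset \Gamma$; this is well-defined because $n(n-1)$ is even, and a direct count shows
\[
  [\Gamma : \Lambda] = |\mathfrak{S}_n| \cdot \tfrac{n(n-1)}{2} < \infty.
\]
Defining $s \colon \Lambda \to B_n$ by $s((e, n(n-1)k)) := \Delta^{2k}$ yields a homomorphism with $p \circ s = \mathrm{id}_\Lambda$ and $s(\Lambda) = Z(B_n)$. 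Theorem~\ref{thm cl equivalence pp} then gives the claimed equivalence of $\cl_{B_n}$ and $\cl_{B_n, P_n \cap [B_n,B_n]}$ on $[B_n, P_n \cap [B_n,B_n]]$.

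The proof is essentially bookkeeping and I do not foresee any substantive obstacle. The only points that require mild care are verifying that $\ker \pi$ is exactly $P_n \cap [B_n,B_n]$ (so that $\Gamma$ really sits inside $\mathfrak{S}_n \times \ZZ$), and confirming that the cyclic subgroup $\Lambda$ generated by the image of the full twist, though lying inside the $\ZZ$-kernel of $\Gamma \to \mathfrak{S}_n$, still has finite index in the ambient group $\Gamma$.
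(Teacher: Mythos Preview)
Your proposal is correct and follows exactly the approach the paper intends: the proposition is stated immediately after Theorem~\ref{thm cl equivalence pp} as an application thereof, and your argument supplies precisely the required virtual section into $Z(B_n)=\langle\Delta^2\rangle$ via the identification $\Gamma\cong\{(\sigma,k)\in\mathfrak{S}_n\times\ZZ:\mathrm{sgn}(\sigma)=(-1)^k\}$. The bookkeeping (kernel of $\pi$, exponent sum of $\Delta^2$, finite index of $\Lambda$) is all in order.
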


Here we provide the following open problem related to the equivalence problem of $\cl_\hG$ and $\cl_{\hG,\bG}$.
Note that in these cases, $\scl_G$ and $\scl_{G,N}$ are known to be equivalent (Theorems \ref{ia_extendable} and \ref{survey of braid}).

\begin{openproblem}[{\cite[Problem 9.15]{K2M3}}]
For $(\hG,\bG)=\left(B_n, [P_n,P_n]\right)$ $(n \ge 3)$, $\left(\Aut(F_n),\IA_n\right)$ $(n \ge 2)$, are $\cl_{\hG}$ and $\cl_{\hG,\bG}$ equivalent  on $[\hG,\bG]$?
\end{openproblem}

For the rest of this section, we consider the coincidence problem of $\cl_\hG$ and $\cl_{\hG,\bG}$, following \cite{K2M32}. As was mentioned in Theorem~\ref{equiv thm ex}, there are several cases that $\scl_\hG$ and $\scl_{\hG,\bG}$ coincide. There is no guarantee that $\cl_\hG$ and $\cl_{\hG,\bG}$ coincide although $\scl_\hG$ and $\scl_{\hG,\bG}$ coincide. However, rather surprisingly, $\cl_\hG$ and $\cl_{\hG,\bG}$ coincide in the case the quotient group $\Gamma$ has a certain algebraic property, called \emph{local cyclicity}.

Recall that a group $\Gamma$ is \emph{locally cyclic} if every finitely generated subgroup of $\Gamma$ is cyclic. Cyclic groups, $\QQ$ and $\ZZ[1/2]$ are typical examples of locally cyclic groups, and $(\ZZ / 2\ZZ)^2$ and $\RR$ are not locally cyclic.

A locally cyclic group is abelian, and its second cohomology group with real coefficients vanishes.
Thus, Theorems \ref{easy cor} and \ref{equiv thm ex} imply that $\scl_\hG$ and $\scl_{\hG,\bG}$ coincide when $\Gamma = G/N$ is locally cyclic. In this case, however, the following stronger assertion holds.

\begin{thm}[{\cite[Theorem~1.7]{K2M32}}] \label{thm locally cyclic}
If $\Gamma = \hG/\bG$ is locally cyclic, then $\cl_\hG$ and $\cl_{\hG,\bG}$ coincide on $[\hG,\bG]$.
\end{thm}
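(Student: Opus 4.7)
The plan is to prove the non-trivial inequality $\cl_{G,N}(x) \le \cl_{G}(x)$ for every $x \in [G,N]$; the opposite inequality holds tautologically since every mixed commutator is a commutator. I will in fact establish a pointwise strengthening: for any $a, b \in G$ whose images $\bar{a}, \bar{b}$ generate a cyclic subgroup of $\Gamma$, the single commutator $[a,b]$ already equals a single $(G,N)$-commutator.

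The algebraic heart of the argument is the pair of identities $[ab^k, b] = [a, b]$ and $[a, ba^k] = [a, b]$ for every $k \in \ZZ$, which are verified by direct expansion. These identities allow one to modify the lifts $a, b$ of fixed elements of $\Gamma$ without altering the commutator $[a, b]$.

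To apply this, I would take a shortest expression $x = [a_1, b_1] \cdots [a_n, b_n]$ with $n = \cl_G(x)$ and rewrite each factor independently. Local cyclicity applied to the finitely generated subgroup $\langle \bar{a}_i, \bar{b}_i \rangle \subseteq \Gamma$ furnishes a generator $\bar{c}_i$ and integers $p_i, q_i$ with $\bar{a}_i = \bar{c}_i^{p_i}$ and $\bar{b}_i = \bar{c}_i^{q_i}$. The two identities above correspond, at the level of exponents, to the elementary transvections $(p_i, q_i) \mapsto (p_i + k q_i, q_i)$ and $(p_i, q_i) \mapsto (p_i, q_i + k p_i)$, i.e.\ to the standard generators of $SL_2(\ZZ)$. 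Running the ordinary Euclidean algorithm reduces $(p_i, q_i)$ in finitely many steps to a pair of the form $(d_i, 0)$; the corresponding sequence of $G$-level moves produces $a_i', b_i' \in G$ with $[a_i', b_i'] = [a_i, b_i]$ and $\bar{b}_i' = \bar{e}$, so that $b_i' \in N$ and $[a_i', b_i']$ is a mixed commutator. Then $x = \prod_i [a_i', b_i']$ witnesses $\cl_{G,N}(x) \le n$.

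The only delicate point is the torsion case, where $\bar{c}_i$ has finite order $m_i$ and the exponents $p_i, q_i$ genuinely live in $\ZZ/m_i\ZZ$ rather than in $\ZZ$. The cleanest workaround is to first choose integer lifts $\tilde{p}_i, \tilde{q}_i \in \ZZ$ and to execute the Euclidean algorithm in $\ZZ$; the resulting sequence of transvections terminates at a pair $(\tilde{d}_i, 0) \in \ZZ^2$, whose image in $\ZZ/m_i\ZZ$ still has zero second coordinate, and that is all one needs in order to force $b_i' \in N$. This small observation is the only step that distinguishes the torsion case from the classical Euclidean argument in $\ZZ$, and it is the one that will require the most care in a formal write-up; the remainder of the proof is routine bookkeeping.
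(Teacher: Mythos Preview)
Your argument is correct. This survey does not itself prove the theorem---it merely cites \cite[Theorem~1.7]{K2M32}---so there is no in-paper proof to compare against directly. Your approach, reducing each commutator $[a_i,b_i]$ in a shortest $\cl_G$-expression to a single $(G,N)$-commutator via the identities $[ab^k,b]=[a,b]$ and $[a,ba^k]=[a,b]$ together with the Euclidean algorithm on the exponents of $\bar a_i,\bar b_i$ relative to a generator of the cyclic group $\langle \bar a_i,\bar b_i\rangle$, is the natural elementary route and is essentially how the original reference proceeds. The handling of the torsion case by lifting exponents to $\ZZ$, running the integer Euclidean algorithm, and observing that a zero second coordinate upstairs forces $\bar b_i'=e$ downstairs is exactly right; note also that if the algorithm terminates with the \emph{first} coordinate zero you can append the two-step swap $(0,d)\to(d,d)\to(d,0)$, so in every case you may arrange $b_i'\in N$.
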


\begin{example} \label{eg:braid}
Let $G$ be the braid group $B_n$ and $N$ the commutator subgroup $[G,G]$ of $G$.
Then $\cl_{G,N}$ and $\cl_G$ coincide on $[G,N]$ since $G/N \cong \ZZ$ is (locally) cyclic.
\end{example}

From this result, it is natural to consider the following problem:

\begin{openproblem}
    If $\Gamma$ is not locally cyclic, then does there exist a triple $(\hG,\bG,\Gamma)$ fitting in $(\star)$ such that $\cl_\hG(x)$ and $\cl_{\hG,\bG}(x)$ does not coincide for some element $x \in [\hG,\bG]$?
\end{openproblem}
In the case that $\Gamma$ is abelian, we can construct such a triple $(\hG,\bG, \Gamma)$ using the wreath product.

Here we recall the definition of the wreath product. Let $H$ and $\Gamma$ be groups. Then $\Gamma$ acts on $\bigoplus_\Gamma H$ by right, and the wreath product $H \wr \Gamma$ is the semidirect product of this action. Note that in this case there exists a split short exact sequence
\[ 1 \to \bigoplus_{\Gamma} H \to H \wr \Gamma \to \Gamma \to 1\]
of groups. When $G=\ZZ \wr \Gamma$, mixed commutator length is closely related to general rank \cite{K2M32}.

Let $\Gamma$ be a group and $\Lambda$ its subgroup. Recall that the \emph{rank} $\rk(\Gamma)$ of $\Gamma$ is the smallest cardinality of generators of $\Gamma$. Define the \emph{intermediate rank} of the pair $(\Gamma, \Lambda)$ to be the number
\[ \intrk^\Gamma(\Lambda) = \inf \{ \rk (\Theta) \; | \; \Lambda \leqslant \Theta \leqslant \Gamma \} \in \ZZ_{\geq 0} \cup \{ \infty \}.\]
Define the \emph{general rank} of a group $\Gamma$ by
\[ \genrk(\Gamma) = \sup \{ \intrk^\Gamma (\Lambda) \; | \; \textrm{$\Lambda$ is a finitely generated subgroup of $\Gamma$}\}.\]
General rank was originally introduced by Malcev \cite{Malcev}, which is a generalization of the rank of finitely generated group. In fact, if $\Gamma$ is finitely generated, then $\rk (\Gamma) = \genrk (\Gamma)$. If $\Gamma$ is abelian, then the general rank of $\Gamma$ coincides with the maximal rank of a finitely generated subgroup of $\Gamma$ (see \cite[Lemma~3.2]{K2M32}). Note that $\genrk(\Gamma) \le 1$ if and only if $\Gamma$ is locally cyclic.

Using general rank, the mixed commutator lengths of $\hG = \ZZ \wr \Gamma$ and $N = \bigoplus_\Gamma \ZZ$ can be described as follows:

\begin{thm}[{\cite[Theorems~1.4 and 1.8]{K2M32}}] \label{thm not coincidence}
Set $G = \ZZ \wr \Gamma$ and $N = \bigoplus_\Gamma \ZZ$. Then the following hold:
\begin{enumerate}[$(1)$]
\item For $r \in \ZZ_{\ge 0}$, $r \le \genrk(\Gamma)$ if and only if there exists $x \in [\hG, \bG]$ such that $\cl_{\hG, \bG}(x) = r$.

\item If $\Gamma$ is abelian, $\cl_G(x) = \lceil \cl_{G,N}(x) / 2\rceil$ for $x \in [\hG, \bG]$. Here $\lceil \cdot \rceil$ denotes the ceiling function.
\end{enumerate}
\end{thm}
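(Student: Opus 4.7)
The plan is to reduce both statements to an explicit description of mixed commutators in $G = \ZZ \wr \Gamma = N \rtimes \Gamma$. Writing $N = \bigoplus_\Gamma \ZZ$ additively and identifying it with the group ring $\ZZ\Gamma$ (on which $\Gamma$ acts by left multiplication), a direct computation in the semidirect product shows that every mixed commutator has the form $[g, y] = (\gamma - 1) y$, where $\gamma \in \Gamma$ is the image of $g \in G$ and $y \in \ZZ\Gamma$. In particular, $[G, N]$ is the augmentation ideal $I\Gamma = \{x \in \ZZ\Gamma : \sum_g x(g) = 0\}$, and $\cl_{G,N}(x)$ equals the minimum $r$ such that $x = \sum_{i=1}^{r}(\gamma_i - 1) y_i$ with $\gamma_i \in \Gamma$ and $y_i \in \ZZ\Gamma$.

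The key technical step is the following coset-sum characterization: for $x \in I\Gamma$,
\[
\cl_{G,N}(x) = \min \bigl\{ \rk(\Theta) : \Theta \le \Gamma \text{ and } \sum_{g \in C} x(g) = 0 \text{ for every coset } C \in \Gamma/\Theta \bigr\}.
\]
The inequality $\ge$ is immediate because each $(\theta - 1) g$ is supported on the single coset $\Theta g$ with coefficient sum zero. For $\le$, once $\Theta = \langle \theta_1, \ldots, \theta_s \rangle$ satisfies the coset condition, an induction on the word length of $\theta$ shows that every element $(\theta - 1) g$ lies in $\sum_{i=1}^{s} (\theta_i - 1) \ZZ\Gamma$, using the identity $(\alpha\beta - 1) g = (\alpha - 1) \beta g + (\beta - 1) g$ together with $(\theta_i^{-1} - 1) g = (\theta_i - 1)(-\theta_i^{-1} g)$ to handle inverses. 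Applying this coset by coset to the given $x$ produces an expression with $s$ mixed commutators.

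Part (1) now follows. The upper bound is immediate: since $\supp(x)$ is finite, $\Lambda_x := \langle \supp(x)\rangle$ is finitely generated, and for any $\Theta \supseteq \Lambda_x$ the function $x$ is supported on the single coset $\Theta$ with augmentation zero, so $\cl_{G,N}(x) \le \intrk^\Gamma(\Lambda_x) \le \genrk(\Gamma)$. To realize every $r \le \genrk(\Gamma)$, first take a finitely generated $\Lambda \le \Gamma$ with $\intrk^\Gamma(\Lambda) = \genrk(\Gamma)$ (or arbitrarily large when $\genrk(\Gamma) = \infty$) and generators $\lambda_1, \ldots, \lambda_k$; set $x_{\max} = \sum_{i=1}^{k} (\lambda_i - 1)$. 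The image of $x_{\max}$ in $\ZZ[\Gamma/\Theta]$ vanishes iff every $\lambda_i$ lies in $\Theta$, iff $\Lambda \le \Theta$, so the characterization forces $\cl_{G,N}(x_{\max}) = \intrk^\Gamma(\Lambda) = \genrk(\Gamma)$. Intermediate values are obtained by the standard descent trick: if $x = c_1 \cdots c_R$ is a minimal expression, then $xc_R^{-1}$ has mixed commutator length exactly $R - 1$ by the triangle inequality.

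For part (2), assume $\Gamma$ is abelian, so that $\gamma \delta \gamma^{-1} = \delta$ in the semidirect product, and compute directly:
\[
[(a, \gamma), (b, \delta)] = (\gamma - 1) b - (\delta - 1) a = (\gamma - 1) b + (\delta^{-1} - 1)(\delta a).
\]
This realizes any ordinary commutator as a sum of two mixed commutators, so $\cl_{G,N}(x) \le 2 \cl_G(x)$ and hence $\cl_G(x) \ge \lceil \cl_{G,N}(x)/2 \rceil$. Conversely, any pair of mixed commutators $(\gamma_1 - 1) y_1 + (\gamma_2 - 1) y_2$ equals the single ordinary commutator $[(\gamma_2 y_2, \gamma_1), (y_1, \gamma_2^{-1})]$, so pairing up the terms in an expression of length $r$ for $x$ produces $\lceil r/2 \rceil$ ordinary commutators and gives the reverse inequality. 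The main obstacle throughout is the inductive proof of the coset-sum characterization; once it is in hand, both parts reduce to the concrete computations outlined above.
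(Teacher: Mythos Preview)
The survey paper does not supply a proof of this theorem; it merely cites \cite{K2M32}. So there is no ``paper's own proof'' to compare against, and I can only assess your argument on its merits.

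Your proof is essentially correct and follows what is the natural line of attack: identify $N$ with the group ring $\ZZ\Gamma$, recognize mixed commutators as elements $(\gamma-1)y$, and reduce $\cl_{G,N}$ to a module-theoretic question about the augmentation ideal. The coset-sum characterization is the right engine, and your inductive proof that $(\theta-1)g \in \sum_i(\theta_i-1)\ZZ\Gamma$ for $\theta \in \Theta = \langle \theta_1,\ldots,\theta_s\rangle$ is clean. The construction of $x_{\max}$ and the descent trick for part~(1), and the explicit commutator identities for part~(2), are all correct.

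Two minor points worth tightening. First, since $\Gamma$ acts on $\ZZ\Gamma$ by \emph{left} multiplication, the element $(\gamma-1)g = \gamma g - g$ is supported on a single \emph{right} coset $\Theta g$; your characterization should therefore be phrased in terms of $\Theta\backslash\Gamma$ rather than $\Gamma/\Theta$. This is irrelevant when $\Gamma$ is abelian, but part~(1) does not assume that. Second, in the ``apply coset by coset'' step you should spell out the one-line identity
\[
x|_C \;=\; \sum_{\theta\in\Theta} x(\theta g_0)\,\theta g_0 \;=\; \sum_{\theta\in\Theta} x(\theta g_0)\,(\theta-1)g_0 \;+\; \Bigl(\sum_{\theta} x(\theta g_0)\Bigr) g_0,
\]
where the last term vanishes by the coset-sum hypothesis; this is what actually places $x|_C$ inside $\sum_i(\theta_i-1)\ZZ\Gamma$. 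With these clarifications the argument is complete.
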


Combining Theorems~\ref{thm locally cyclic} and \ref{thm not coincidence}, we have the following dichotomy.

\begin{cor} \label{cor:loc_cyc}
Assume that $\Gamma$ is \emph{abelian}.
Then, exactly one of the following two options holds true.
\begin{enumerate}[$(1)$]
\item The group $\Gamma$ is locally cyclic.

\item 
There exists a pair $(G,N)$ of groups fitting in the short exact sequence
\[ 1 \to N \to G \to \Gamma \to 1\]
such that $\cl_G$ and $\cl_{G,N}$ do \emph{not} coincide on $[G,N]$.
\end{enumerate}
\end{cor}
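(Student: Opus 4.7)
The proof is a direct combination of Theorems \ref{thm locally cyclic} and \ref{thm not coincidence}, together with the equivalence (noted in the paper right after the definition of general rank) that a group is locally cyclic if and only if its general rank is at most one. The plan is to verify both that the two alternatives cannot occur simultaneously and that at least one of them must hold.

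First I would check mutual exclusivity. Suppose that $(1)$ holds, namely $\Gamma$ is locally cyclic. Then Theorem \ref{thm locally cyclic} applies to \emph{every} pair $(G,N)$ fitting into a short exact sequence $1 \to N \to G \to \Gamma \to 1$ and asserts that $\cl_G$ and $\cl_{G,N}$ coincide on $[G,N]$. This is precisely the negation of $(2)$, so $(1)$ and $(2)$ cannot both hold.

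Next I would handle the main direction: not $(1)$ implies $(2)$. Suppose that $\Gamma$ is abelian but not locally cyclic. By the equivalence recalled above, $\genrk(\Gamma) \geq 2$. Consider the canonical split extension
\[ 1 \to \bigoplus_{\Gamma} \ZZ \to \ZZ \wr \Gamma \to \Gamma \to 1, \]
and set $G = \ZZ \wr \Gamma$ and $N = \bigoplus_{\Gamma} \ZZ$. Since $\genrk(\Gamma) \geq 2$, Theorem \ref{thm not coincidence}$(1)$ provides an element $x \in [G,N]$ with $\cl_{G,N}(x) = 2$. Since $\Gamma$ is abelian, Theorem \ref{thm not coincidence}$(2)$ then gives $\cl_G(x) = \lceil \cl_{G,N}(x) / 2 \rceil = 1 \neq 2 = \cl_{G,N}(x)$, so $(2)$ holds with this choice of $(G,N)$.

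There is essentially no obstacle here: all the substantive content is already packaged in the two theorems being invoked, and the corollary is a matter of putting them side by side. The only small point to be careful about is to note explicitly that the extension $1 \to N \to G \to \Gamma \to 1$ produced in the non-locally-cyclic case really does have quotient $\Gamma$, which is built into the definition of the wreath product.
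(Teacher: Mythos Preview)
Your proof is correct and follows exactly the approach indicated in the paper, which simply states that the corollary is obtained by combining Theorems~\ref{thm locally cyclic} and \ref{thm not coincidence}. Your explicit verification of both directions, including the choice of the wreath product $(G,N)=(\ZZ\wr\Gamma,\bigoplus_\Gamma\ZZ)$ and the computation $\cl_G(x)=\lceil 2/2\rceil=1\neq 2=\cl_{G,N}(x)$, is precisely the intended argument.
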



\section{On invariant partial quasimorphisms}\label{partial section}
As is explained in Section \ref{prehistory in symp}, Entov and Polterovich \cite{EP06} implicitly introduced the concept of partial quasimorphism.
However, they did not provide a precise definition of partial quasimorphism and some works provided some definitions of partial quasimorphisms (\cite{FOOO}, \cite{MVZ}, \cite{E}, \cite{Ka16}, \cite{Ka17}, \cite{BK}, \cite{KKMM1} and \cite{K22}).
Here, we provide a general definition containing these definitions.
\begin{definition}\label{qm rt nu}
Let $G$ be a group and $\nu \colon G \to \RR_{\geq0}\cup \{+\infty\}$ a function on $G$.
A function $\phi\colon G\to\mathbb{R}$ is called a \textit{$\nu$-quasimorphism} (\textit{quasimorphism relative to $\nu$} or \textit{quasimorphism controlled by $\nu$}) if there exists a non-negative number $D$ such that for all $f,g \in G$,
\[|\phi(fg)-\phi(f)-\phi(g)| \leq D\,\min\{\nu(f),\nu(g)\}.\]
The function $\phi$ is said to be \textit{semi-homogeneous} if $\phi(f^n)=n\phi(f)$ for every $f \in G$ and
every non-negative integer $n$.
\end{definition}

The word ``$\nu$-quasimorphism'' was introduced in \cite{Ka16} when $\nu$ is a conjugate-invariant norm in the sense of \cite{BIP}.
K{\k{e}}dra \cite{K22} also considers $\nu$-quasimorphism in the same case.\footnote{He used the word ``partial quasimorphism relative to $\nu$.''}
We refer the reader to \cite{CZ} for a related work.

\begin{example}\label{trivial norm}
We define a function $\nu_0\colon G\to \mathbb{R}$ by
\begin{equation*}
\nu_0(g)=
\begin{cases}
0, & \text{if } g=1, \\
1, & \text{otherwise}.
\end{cases}
\end{equation*}
Then, a semi-homogeneous function $\phi$ is a $\nu_0$-quasimorphism if and only if $\phi$ is a quasimorphism.
\end{example}

\begin{example}\label{fragmentation norm}
Let $G$ be a group and $H$ a subgroup of $G$.
We define the fragmentation norm $\nu_H$ with respect to $H$ by for an element $f$ of $G$,
\[ \nu_H(f)=\min
\left\{ k \; ; \;
\begin{gathered}
  \begin{gathered}\end{gathered}
  \exists g_1\ldots,g_k\in G, \; \exists h_1,\ldots h_k\in H\text{ such that } \\
  f=g_1h_1g_1^{-1}\cdots g_kh_kg_k^{-1}.
\end{gathered}
\right\} \]
If there does not exist such a decomposition of $f$, we set $\nu_H(f)=+\infty$.
If $\nu(f)<+\infty$ for every $f\in G$, $\nu$ is a conjugation-invariant norm in the sense of \cite{BIP}.
\end{example}

When Entov and Polterovich \cite{EP06} introduced
the concept of partial quasimorphism, they considered the case when $G$ is (the universal covering of) the group $\Ham(M,\omega)$ of Hamiltonian diffeomorphisms and $H$ a subgroup of $G$ consisting of Hamiltonian diffeomorphisms with some displaceable support.
They proved that the homogenization of Oh--Schwarz's spectral invariant is a $\nu_H$-quasimorphism.
The papers \cite{FOOO}, \cite{MVZ} considered a similar situation and proved that  the homogenization of other spectral invariants are $\nu_H$-quasimorphisms.

In \cite{Ka16} and \cite{Ki} some of the authors constructed a non-trivial $\nu_H$-quasimorphism on $G$ when $(G,H)=(\mathrm{Ham}(\mathbb{R}^{2n},\omega_0),\mathrm{Ham}(\mathbb{B}^{2n},\omega_0))$, and $(G,H)=(B_\infty,B_n)$, respectively.
Here $\omega_0$ is the standard symplectic structure on $\RR^{2n}$.
Brandenbursky and K\k{e}dra \cite{BK} constructed a non-trivial $H$-quasimorphism on $G$ when $G$ is the identity component of the group of volume-preserving diffeomorphisms.

\begin{example}\label{Nqm}
Let $N$ be a normal subgroup.
Then, the norm $\nu_N$ satisfies the following.
\begin{equation*}
\nu_N(g)=
\begin{cases}
0, & \text{if } g = 1, \\
1, & \text{if } g \in N \setminus \{ 1 \}, \\
+\infty, & \text{otherwise}.
\end{cases}
\end{equation*}
Thus, a function $\phi$ is a $\nu_N$-quasimorphism if and only if there exists $D'' \ge 0$ such that for every $g \in G$ and for every $x \in N$, the following inequalities hold:
\[ |\phi(gx) - \phi(g) - \phi(x)| \le D'' \quad \textrm{and} \quad |\phi(xg) - \phi(x) - \phi(g)| \le D''.\]
The smallest non-negative real number $D''$ satisfying the above two inequalities is called the \emph{defect} of $\phi$, and is denoted by $D''(\phi)$.

\end{example}
The authors consider $\nu_N$-quasimorphisms in \cite{KKMM1}, \cite{K2M3}.
The concept of $\nu_N$-quasimorphism is technically useful in the proof of the main theorems in these papers (for examples, see Subsection \ref{Bavard N qm}).

Here, we explain some results on the extension problem of partial quasimorphisms.

\begin{conjecture}[{\cite[Conjecture 1.3]{Ka18}}]\label{partial conj}
Let $(G,H)$ be $(\Ham(\mathbb{R}^{2n},\omega_0),\mathrm{Ham}(\mathbb{B}^{2n},\omega_0))$ or $(B_\infty,B_n)$.
For every semi-homogeneous $H$-quasimorphism $\phi$ on $[G,G]$, there exists a homogeneous $H$-quasimorphism $\hat{\phi}$ on $G$ such that $\hat{\phi}|_{[G,G]}=\phi$.
\end{conjecture}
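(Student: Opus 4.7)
I would adapt the splitting argument behind Theorem~\ref{virtually split} to the partial-quasimorphism setting, exploiting the fact that in both pairs $[G,G]$ is the kernel of an explicit abelian-valued homomorphism that admits a section landing inside $H$. For $(B_\infty, B_n)$, the abelianization $B_\infty \to \ZZ$ is split by $k \mapsto \sigma_1^k$, where $\sigma_1 \in B_2 \subseteq B_n$. For $(\Ham(\RR^{2n},\omega_0), \Ham(\mathbb{B}^{2n},\omega_0))$, the Calabi homomorphism $\Cal \colon G \to \RR$ has kernel $[G,G]$ and admits the section $t \mapsto \varphi^1_{tH_0}$, where $H_0$ is an autonomous Hamiltonian supported in $\mathbb{B}^{2n}$ with $\int H_0\,\omega_0^n = 1$. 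Writing $s$ for either section and $\pi \colon G \to G/[G,G]$ for the projection, one has $\nu_H(s(c)) \leq 1$ for every $c \neq 0$, and every $g \in G$ decomposes uniquely as $g = s(\pi(g)) \cdot x_g$ with $x_g \in [G,G]$.

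Next I would prove the partial-quasimorphism analogue of Lemma~\ref{basic lemma}: every semi-homogeneous $H$-quasimorphism $\phi$ on $[G,G]$ is invariant under $H$-conjugation. For $h \in H$ and $x \in [G,G]$, the identity $(hx)^m = h(xh)^m h^{-1}$, combined with the $H$-quasimorphism defect bound (which for the factor $h \in H$ is at most $D\,\nu_H(h) \leq D$), gives $|m\phi(hxh^{-1}) - m\phi(x)| = O(1)$; dividing by $m$ and letting $m \to \infty$ yields $\phi(hxh^{-1}) = \phi(x)$. In particular, $\phi$ is invariant under conjugation by $s(c)$ for every $c$. Using this, I would define $\hat\phi(g) := \lim_{k \to \infty} \phi(x_{g^k})/k$; the $s$-conjugation invariance of $\phi$ ensures that $x_{g^k}$, which is a product of $s(\pi(g))$-conjugates of $x_g$, has $\phi$-value within a uniformly bounded error of $k\,\phi(x_g)$, so the limit converges and gives a homogeneous function on $G$ restricting to $\phi$ on $[G,G]$.

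For the $H$-quasimorphism bound on $\hat\phi$, compute
\[
x_{fg} = \bigl( s(\pi(g))^{-1} x_f\, s(\pi(g)) \bigr) \cdot x_g = \tilde{x}_f \cdot x_g ;
\]
the $H$-invariance gives $\phi(\tilde{x}_f) = \phi(x_f)$, while the conjugation invariance of the fragmentation norm gives $\nu_H(\tilde{x}_f) = \nu_H(x_f) \leq \nu_H(f) + 1$. The $H$-quasimorphism property of $\phi$ on $[G,G]$ then yields $|\hat\phi(fg) - \hat\phi(f) - \hat\phi(g)| \leq D(\min\{\nu_H(f), \nu_H(g)\} + 1)$, which is bounded by $2D\min\{\nu_H(f), \nu_H(g)\}$ whenever neither $f$ nor $g$ is trivial, as required.

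\textbf{Main obstacle.} The hardest step is the homogenization in the definition of $\hat\phi$: since $s(\pi(g))$ and $x_g$ do not commute in general, expanding $g^k$ produces a complicated interleaved word, and rewriting it into the canonical form $s(k\pi(g)) \cdot x_{g^k}$ requires iterated $s$-conjugations whose cumulative effect on $\phi$ must be carefully tracked using Step~2. In the Hamiltonian case a more quantitative form of the $H$-conjugation invariance, perhaps coupled with a $C^0$-continuity argument, may be needed. Finally, Py-type obstructions (cf.\ Theorem~\ref{thm:py}) show that extension can fail for ordinary invariant quasimorphisms, so the argument must make essential use of the fact that the section $s$ lies inside $H$ in order to distinguish these pairs from the failure cases.
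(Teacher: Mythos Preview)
The statement you are attempting is presented in the paper as an \emph{open conjecture} (quoted from \cite{Ka18}); the paper gives no proof. Immediately after stating it the paper records that \emph{if} the conjecture holds then every semi-homogeneous $H$-quasimorphism on $[G,G]$ is $G$-invariant---phrased as a consequence, not an input---and right after Theorem~\ref{nonext partial} it notes explicitly that the splitting argument of Theorem~\ref{virtually split} \emph{fails} for partial quasimorphisms. Your plan is precisely such an adaptation, so it must overcome this known obstruction; you acknowledge this in your final paragraph, but the argument as written does not do so.

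The concrete gap is already in your $H$-conjugation invariance step. The map $\phi$ is defined only on $[G,G]$, yet for $h=s(c)\in H$ one has $h\notin[G,G]$ (indeed $s$ was chosen to split the abelianisation), so the products $hx$, $xh$ and the peeling $\phi(h\cdot x^{m}h^{-1})\approx\phi(h)+\phi(x^{m}h^{-1})$ make no sense. Rewriting as $hx^{m}h^{-1}=[h,x^{m}]\cdot x^{m}$ keeps both factors in $[G,G]$ and gives
\[
|\,m\,\phi(hxh^{-1})-m\,\phi(x)\,|\le 2D+|\phi([h,x^{m}])|,
\]
but now you must show $|\phi([h,x^{m}])|=o(m)$. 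There is no a priori bound on $|\phi(y)|$ in terms of $\nu_H(y)$ (semi-homogeneity lets $\phi(y)$ be arbitrary even when $\nu_H(y)\le 2$), and expanding $[h,x^{m}]=\prod_{j=0}^{m-1}x^{j}[h,x]x^{-j}$ only yields an $O(m)$ estimate. The same issue propagates to the convergence of $\hat\phi(g)=\lim_{k}\phi(x_{g^{k}})/k$: from $x_{g^{k+l}}=(a^{-l}x_{g^{k}}a^{l})\cdot x_{g^{l}}$ the $H$-quasimorphism bound gives a sub-additivity error of order $\min(k,l)\,\nu_H(x_g)$, which is too large for Fekete's lemma, and the needed identity $\phi(a^{-l}x_{g^{k}}a^{l})=\phi(x_{g^{k}})$ is exactly the unproved conjugation invariance. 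In short, both the invariance and the homogenisation steps rest on estimates that the partial-quasimorphism inequality does not supply; this is why the problem remains a conjecture.
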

If Conjecture \ref{partial conj} holds, then every semi-homogeneous $H$-quasimorphism on $[G,G]$ is a $G$-invariant homogeneous $H$-quasimorphism (use \cite[Lemma 4.6]{KK}).

The following theorem supports the above conjecture.
\begin{thm}[{\cite[Theorem 1.4]{Ka18}}]\label{ext partial}
Let $(G,H)$ be $(\Ham(\mathbb{R}^{2n},\omega_0),\Ham(\mathbb{B}^{2n},\omega_0))$ or $(B_\infty,B_n)$.
For every semi-homogeneous $H$-quasimorphism $\phi$ on $[G,G]$ and for every $g \in [G,G]$ such that $\phi(g)\neq0$, there exists a homogeneous $H$-quasimorphism $\hat{\phi}_g$ on $G$ such that $\hat{\phi}_g(g)\neq0$.
\end{thm}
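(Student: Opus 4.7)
The plan is to construct $\hat{\phi}_g$ by pulling $\phi$ back through a section of the abelianization and then homogenizing. In both cases under consideration, the abelianization $G/[G,G]$ is locally cyclic: for $(G,H) = (B_\infty, B_n)$ it is $\ZZ$ via the exponent sum map $\alpha = e$, and for $(G,H) = (\Ham(\RR^{2n},\omega_0), \Ham(\mathbb{B}^{2n},\omega_0))$ it is $\RR$ via the Calabi homomorphism $\alpha = \Cal$. In both cases $\alpha$ admits a section homomorphism $s$ whose image lies in $H$: take $s(k) = \sigma_1^k$ in the braid case (since $\sigma_1 \in B_2 \subset B_n$), and take $s(t)$ to be the time-$t$ map of an autonomous Hamiltonian supported in $\mathbb{B}^{2n}$ with unit Calabi integral in the symplectic case. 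Every section element therefore has finite $\nu_H$-norm.

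A preliminary step is an approximate conjugation-invariance estimate: for $\phi$ as in the hypothesis, $\xi \in G$ with $\nu_H(\xi) < \infty$, and $x \in [G,G]$, one applies the defect inequality of $\phi$ to the decomposition $\xi x^n \xi^{-1} = [\xi, x^n] \cdot x^n$ (both factors lying in $[G,G]$) and combines this with the semi-homogeneous identity $\xi x^n \xi^{-1} = (\xi x \xi^{-1})^n$. Dividing by $n$ yields an estimate on $|\phi(\xi x \xi^{-1}) - \phi(x)|$ controlled by $\nu_H(\xi)$ and $\nu_H(x)$. For $\xi = s(\gamma)$ this says $\phi$ is quasi-invariant under conjugation by section elements, with a bound independent of the order of iteration.

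Using this, I would define
\[
  \hat{\phi}_g(h) := \lim_{n \to \infty} \frac{\phi\bigl(h^n \cdot s(\alpha(h))^{-n}\bigr)}{n},
\]
noting that $h^n s(\alpha(h))^{-n} \in [G,G]$ because $\alpha$ kills this product, so the expression in the defining limit is well-defined. Homogeneity of $\hat{\phi}_g$ follows from the definition, and existence of the limit is verified via a Fekete-type quasi-subadditivity argument based on the identity
\[
  h^{n+m} s(\alpha(h))^{-(n+m)} = \bigl(h^n s(\alpha(h))^{-n}\bigr) \cdot s(\alpha(h))^n \bigl(h^m s(\alpha(h))^{-m}\bigr) s(\alpha(h))^{-n}
\]
together with the quasi-invariance estimate above. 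Non-vanishing at $g$ is automatic: since $g \in [G,G]$, $\alpha(g) = 0$ and $s(\alpha(g))^{-n} = e$, so $\hat{\phi}_g(g) = \lim_n \phi(g^n)/n = \phi(g) \neq 0$ by semi-homogeneity of $\phi$.

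The main obstacle is verifying the $\nu_H$-quasimorphism defect inequality for $\hat{\phi}_g$ on all of $G$. A direct estimate via the defect of $\phi$ involves $\nu_H$ of $h\, s(\alpha(h))^{-1}$, which is controlled only by subadditivity as $\nu_H(h) + \nu_H(s(\alpha(h)))$, and the second term can be arbitrarily large compared to $\nu_H(h)$ because a single element of $H$ may have large abelianization. Closing this gap requires exploiting case-specific structure: in the symplectic case, compact support together with the displaceability of Hamiltonians supported in $\mathbb{B}^{2n}$ should let one absorb the section correction into the defect; in the braid case, combinatorial rigidity of $B_n$-generators plays the analogous role. It is likely that the defect estimate succeeds only after passing to the limit, where semi-homogeneity smooths out the section contribution — which is also why the weaker conclusion of the theorem (non-vanishing at a single $g$, rather than a genuine extension of $\phi$) is tractable here while Conjecture \ref{partial conj} remains open.
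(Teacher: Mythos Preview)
The survey does not prove Theorem~\ref{ext partial}; it merely quotes \cite[Theorem~1.4]{Ka18}. So there is no ``paper's own proof'' to compare against, and I can only assess your proposal on its own terms.

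Your construction has a structural problem: the function you write down,
\[
  \hat{\phi}_g(h) = \lim_{n\to\infty}\frac{\phi\bigl(h^n\, s(\alpha(h))^{-n}\bigr)}{n},
\]
does not depend on $g$ at all. Your own computation shows that its restriction to $[G,G]$ recovers $\phi$ pointwise. Hence, if you could verify that this function is a homogeneous $\nu_H$-quasimorphism on $G$, you would have produced a genuine extension of $\phi$ and thereby proved Conjecture~\ref{partial conj}, not the weaker Theorem~\ref{ext partial}. The gap you acknowledge --- controlling the $\nu_H$-defect of $\hat{\phi}_g$ on $G$ --- is exactly the content separating the open conjecture from the theorem, and your hand-waving about ``case-specific structure'' and ``semi-homogeneity smoothing out the section contribution'' does not close it. In short, you are attempting the conjecture, admitting you cannot finish it, and then asserting this yields the theorem; but nothing in your argument explains how the single element $g$ could be used to weaken the required defect estimate. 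The form of the statement (an extension \emph{depending on $g$} that merely detects $g$) strongly suggests the actual proof in \cite{Ka18} proceeds differently, building $\hat{\phi}_g$ out of $g$ rather than out of a global section.

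A minor slip: $\RR$ is \emph{not} locally cyclic (the survey itself notes this in Section~\ref{cl comparison sec}); the subgroup generated by $1$ and $\sqrt{2}$ is free abelian of rank two. You do not actually use local cyclicity, only the existence of a one-parameter section, so this does not affect the rest of the sketch --- but it should be corrected.
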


Some of the authors formulated the non-extendability of partial quasimorphisms (Definition 4.5 of \cite{KK}) and proved the following theorem.
\begin{thm}[{\cite[Theorem 1.12]{KK}}]\label{nonext partial}
Let $\Sigma_\genus$ be a closed orientable surface whose genus $\genus$ is at least one and $\omega$ a symplectic form on $\Sigma_\genus$.
Then, the homogenization of Oh--Schwarz's spectral invariant is non-extendable to $\Symp_0(\Sigma_\genus,\omega)$.
\end{thm}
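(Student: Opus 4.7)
The strategy mirrors the proof of Theorem~\ref{thm:py} for Py's Calabi quasimorphism, adapted to the setting of partial quasimorphisms. Set $G = \Symp_0(\Sigma_\genus,\omega)$ and $N = \Ham(\Sigma_\genus,\omega) = [G,G]$, and let $\mu$ denote the homogenization of Oh--Schwarz's spectral invariant. The essential ingredients are the Calabi property of $\mu$ on a displaceable disk (Definition~\ref{definition of Calabi property}) and a uniform bound on $\cl_G$ for iterates of a Hamiltonian diffeomorphism supported in such a disk.

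First I would establish a partial-quasimorphism analog of Proposition~\ref{noneq_criterion}: if $(x_n) \subset [G,N]$ satisfies $\cl_G(x_n) \leq C$ uniformly in $n$ and $\mu(x_n) \to \infty$, then $\mu$ is non-extendable to $G$ in the sense of \cite[Definition~4.5]{KK}. The heart of this criterion is a ``commutator-vanishing'' estimate for any semi-homogeneous partial quasimorphism $\hat\mu$ on $G$ controlled by the fragmentation norm $\nu_H$ (Example~\ref{fragmentation norm}) associated with the subgroup $H$ of elements supported in displaceable open subsets: expanding $\hat\mu([a,b])$ by four applications of the defect inequality and invoking the partial-quasimorphism analog of Lemma~\ref{basic lemma} (conjugation invariance up to an error controlled by $\nu_H$ of the conjugator) yields $|\hat\mu([a,b])| \leq 4 D(\hat\mu) \cdot \min(\nu_H(a),\nu_H(b))$. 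Consequently $|\hat\mu(x_n)| \leq 4 D(\hat\mu) \cdot C \cdot M$ whenever each commutator factor of $x_n$ has $\nu_H$-norm at most $M$.

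Next I would construct the required sequence. For every $\genus \geq 1$, a sufficiently small open topological disk $U \subset \Sigma_\genus$ is displaceable by some $\beta \in G$. Choose $H \in C_c^\infty(U)$ with $\int_U H\,\omega \neq 0$ and set $\varphi = \varphi_H \in \Ham(U,\omega) \subset N$. The Calabi property gives $\mu(\varphi^n) = n \cdot \Cal_U(\varphi) \to \infty$. Since every iterate $\varphi^n$ is again supported in $U$, a standard displacement commutator argument in the spirit of Burago--Ivanov--Polterovich writes $\varphi^n$ as a product of a uniformly bounded number of commutators in $G$ whose factors are themselves supported in $\bigcup_k \beta^k(U)$ and therefore have $\nu_H$-norm bounded by an absolute constant. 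Applying the criterion with $x_n = \varphi^n$ then yields the theorem.

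The main obstacle is the criterion itself. Unlike the classical case, a partial quasimorphism need not vanish on commutators a priori, because the defect inequality only controls non-additivity in terms of the \emph{minimum} $\nu$-norm of the two factors. The argument therefore hinges on choosing commutator representations $[a,b]$ in which $\min(\nu_H(a),\nu_H(b))$ is uniformly bounded; the sequence $(\varphi^n)$ is specifically tailored to this requirement via its support in a single displaceable disk. A secondary technical point is the partial-quasimorphism version of Lemma~\ref{basic lemma}, namely the estimate $|\hat\mu(g x g^{-1}) - \hat\mu(x)| \leq 2 D(\hat\mu)\nu_H(g)$ for $g \in G$ of finite $\nu_H$-norm, which is proved by combining the defect inequality applied to $(g x, g^{-1})$ and $(g, x)$ together with semi-homogeneity; this replaces the telescoping trick used in the proof of Lemma~\ref{basic lemma} and closes the criterion above.
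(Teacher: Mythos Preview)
The survey paper does not itself prove Theorem~\ref{nonext partial}; it merely cites \cite[Theorem~1.12]{KK} and records the consequence for the torus. So there is no proof here to compare against directly.

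More importantly, your criterion and your sequence are mutually incompatible. You correctly derive
\[
  |\hat\mu([a,b])|\;\le\;4\,D(\hat\mu)\,\min\bigl(\nu_H(a),\nu_H(b)\bigr)
\]
for any semi-homogeneous $\nu_H$-quasimorphism $\hat\mu$ on $G$. But observe that $\nu_H(g)<\infty$ forces $g\in N$, since $H\subset\Ham=N$ and $N$ is normal. Hence in any commutator $[a,b]$ with $\min(\nu_H(a),\nu_H(b))\le M$, one of $a,b$ lies in $N$; say $b\in N$. Then $[a,b]=(aba^{-1})\cdot b^{-1}$ is a product of two elements of $N$, and the \emph{same} computation applied to $\mu$ itself (which is a $G$-invariant $\nu_H$-quasimorphism on $N$, cf.\ \cite[Lemma~4.6]{KK}) gives
\[
  |\mu([a,b])|\;\le\;|\mu(aba^{-1})+\mu(b^{-1})|+D(\mu)\,\nu_H(b)\;=\;D(\mu)\,\nu_H(b)\;\le\;D(\mu)\,M,
\]
using $\mu(aba^{-1})=\mu(b)$. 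The assembly errors are likewise controlled by $\nu_H([a_j,b_j])\le 2M$. Consequently, if $\varphi^n$ admitted a decomposition into $\le C$ commutators each with a factor of $\nu_H$-norm $\le M$, the value $\mu(\varphi^n)$ would be uniformly bounded---directly contradicting your Calabi computation $\mu(\varphi^n)=n\,\Cal_U(\varphi)\to\infty$. So no BIP-type argument can produce the decomposition you claim; the obstruction is intrinsic, not a matter of technique.

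This shows that the analogue of Proposition~\ref{noneq_criterion} you propose cannot work for partial quasimorphisms in this form: any hypothesis strong enough to bound an extension $\hat\mu$ via commutator estimates is already strong enough to bound $\mu$ itself. The argument in \cite{KK} must therefore exploit the precise formulation of extendability for partial quasimorphisms in \cite[Definition~4.5]{KK}---a definition the survey does not reproduce---in a way that genuinely distinguishes $\hat\mu$ on $G$ from $\mu$ on $N$. Your proposal does not engage with that definition, and without it the strategy cannot close.
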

When $\genus=1$, under the notation in Example \ref{flux on torus}, the map $([s],[t]) \mapsto \varphi_{[s],[t]}$ is the section homomorphism of the flux homomorphism $\flux_{\omega_T} \colon \Symp_0(T^2, \omega_T) \to \HHH^1(T^2 ; \RR)/\Gamma_\omega \cong (\RR/\ZZ)^2$.
Thus, Theorem \ref{nonext partial} means that Theorem \ref{virtually split} does not hold for partial quasimorphism.

It is an interesting problem to formulate and prove a Bavard-type duality for invariant partial quasimorphisms.
However, this is a difficult problem for even usual partial quasimorphisms.
One of the authors proved a Bavard-type duality for usual partial quasimorphisms in a very special case \cite{Ka17}.
It might be useful to the problem to use the norm-controlled cohomology introduced by one of the authors \cite{kim_norm_coh}, which is a generalization of bounded cohomology and provides a framework for partial quasimorphisms.



\section*{Acknowledgment}
We truly wish to thank the referee for a thorough reading of the submitted version and for very useful comments and suggestions, which have considerably improved this survey.
The second author and the third author are supported by JST-Mirai Program Grant Number JPMJMI22G1 and JSPS KAKENHI Grant Number JP21J11199, respectively.
The first author, the fourth author and the fifth author are partially supported by JSPS KAKENHI Grant Number JP21K13790, JP19K14536 and JP21K03241, respectively.

\bibliography{reference}
\bibliographystyle{amsalpha}

\end{document}